\tikzset{
	element/.style={circle,fill=black,scale=0.5}
}
\newtheorem{theorem}{Theorem}[section]
\newtheorem{lemma}[theorem]{Lemma}
\newtheorem{proposition}[theorem]{Proposition}
\newtheorem{sublemma}{}[theorem]
\numberwithin{equation}{section} 
\newcommand{\cl}{{\rm cl}}
\newcommand{\si}{{\rm si}}
\newcommand{\co}{{\rm co}}
\newcommand{\delete}{\backslash}
\g@addto@macro\bfseries{\boldmath} 
\title[A splitter theorem for elastic elements]{A splitter theorem for Elastic Elements in $3$-Connected Matroids}
\author[G.\ Drummond]{George Drummond}
\address{School of Mathematics and Statistics, University of Canterbury, Christchurch, New Zealand}
\email{george.drummond@pg.canterbury.ac.nz}
\author[C.\ Semple]{Charles Semple}
\address{School of Mathematics and Statistics, University of Canterbury, Christchurch, New Zealand}
\email{charles.semple@canterbury.ac.nz}
\subjclass{05B35}
\keywords{$3$-connected matroids, Bixby's Lemma, Tutte's Wheels-and-Whirls Theorem, Seymour's Splitter Theorem, elastic elements.}
\thanks{The first author was supported by a University of Canterbury PhD Publishing Scholarship, and the second author was supported by the New Zealand Marsden Fund.}
\date{\today}
\begin{document}

\thispagestyle{empty}

\begin{abstract}
An element $e$ of a $3$-connected matroid $M$ is \emph{elastic} if $\si(M/e)$, the simplification of $M/e$, and $\co(M\delete e)$, the cosimplification of $M\delete e$,  are both $3$-connected. It was recently shown that if $|E(M)|\geq 4$, then $M$ has at least four elastic elements provided $M$ has no $4$-element fans and no member of a specific family of $3$-separators. In this paper, we extend this wheels-and-whirls type result to a splitter theorem, where the removal of elements is with respect to elasticity and keeping a specified $3$-connected minor. We also prove that if $M$ has exactly four elastic elements, then it has path-width three. Lastly, we resolve a question of Whittle and Williams, and show that past analogous results, where the removal of elements is relative to a fixed basis, are consequences of this work.
\end{abstract}

\maketitle

\section{Introduction}

Tutte's Wheels-and-Whirls Theorem~\cite{tut66} and its extension, Seymour's Splitter Theorem~\cite{sey80}, are inductive tools that have been crucial in the advancement of matroid theory. Since their publication, variants of these theorems have been established whereby additional constraints are imposed on the elements that are available for removal. For example, Oxley et al.~\cite{oxl08} showed that if $B$ is a given basis of a $3$-connected matroid $M$ with no $4$-element fans, and $N$ is a $3$-connected minor of $M$, then, provided a certain necessary condition holds, there is either an element $b\in B$ such that $M/b$ is $3$-connected with an $N$-minor or an element $b^*\in E(M)-B$ such that $M\delete b^*$ is $3$-connected with an $N$-minor. More recently, Drummond et al.~\cite{dru21} proved a variant of Tutte's Wheels-and-Whirls Theorem for elastic elements. An element $e$ of a $3$-connected matroid $M$ is \emph{elastic} if $\si(M/e)$, the simplification of $M/e$, and $\co(M\delete e)$, the cosimplification of $M\delete e$, are both $3$-connected. It is shown in~\cite{dru21} that if $|E(M)|\ge 4$, and $M$ has no $4$-element fans and no member of a particular family of $3$-separators, generically referred to as $\Theta$-separators, then $M$ has at least four elastic elements. As a comparison, Bixby's Lemma~\cite{bix82} says that if $e$ is an element of $M$, then either $\si(M/e)$ or $\co(M\delete e)$ is $3$-connected. In this paper, we establish a variant of Seymour's Splitter Theorem for elastic elements. Furthermore, we consider the structure of $M$ if it has exactly four elastic elements, resolve a question of Whittle and Williams~\cite{whi13}, and show that past analogous results, where the removal of elements is relative to a fixed basis, are consequences of~\cite{dru21} and the results in this paper. We next state the main results of the paper.

Let $M$ be a $3$-connected matroid and let $N$ be a $3$-connected minor of $M$. We say that an element $e$ of $M$ is \emph{$N$-elastic} if both $\si(M/e)$ and $\co(M\delete e)$ are $3$-connected and each has an $N$-minor. Furthermore, we say that an element $e$ of $M$ is \emph{$N$-revealing} if 
either $\si(M/e)$ has an $N$-minor and is not $3$-connected, or $\co(M\delete e)$ has an $N$-minor and is not $3$-connected.

As well as fans, $\Theta$-separators provide exceptions to the presence of elastic elements. We describe the latter now. For all $n\geq 2$, let $\Theta_n$ denote the matroid whose ground set is the disjoint union $W=\{w_1, w_2, \ldots, w_n\}$ and $Z=\{z_1, z_2, \ldots, z_n\}$, and whose circuits are as follows:
\begin{enumerate}[{\rm (i)}]
\item all $3$-element subsets of $W$;

\item all sets of the form $(Z-\{z_i\})\cup \{w_i\}$, where $i\in \{1, 2, \ldots, n\}$; and

\item all sets of the form $(Z-\{z_i\})\cup\{w_j, w_k\}$, where $i$, $j$, and $k$ are distinct elements of $\{1, 2, \ldots, n\}$.
\end{enumerate}
If $n=2$, then $\Theta_2$ is isomorphic to the direct sum of $U_{1, 2}$ and $U_{1, 2}$, while if $n=3$, then $\Theta_3$ is isomorphic to $M(K_4)$. More generally, it is shown in~\cite{oxl00} that $\Theta_n$ is a matroid for all $n\ge 2$. Furthermore, for the interested reader, $\Theta_n$ is precisely the matroid that underlies the operation of segment-cosegment exchange on $n$ elements~\cite{oxl00}. If $i, j\in \{1, 2, \ldots, n\}$, it is easily checked that $\Theta_n\delete w_i\cong \Theta_n\delete w_j$. Up to isomorphism, we denote the matroid $\Theta_n\delete w_i$ by $\Theta_n^-$. If $n=3$, then $\Theta^-_3$ is isomorphic to a $5$-element fan. We call the elements in $W$ and $Z$ the {\em segment} and {\em cosegment} elements, respectively, of $\Theta_n$ and $\Theta^-_n$.

Now let $M$ be a $3$-connected matroid, and suppose that $r(M)\ge 4$ and $r^*(M)\geq 4$. Let $W$ and $Z$ be rank-$2$ and corank-$2$ subsets of $E(M)$, respectively, such that  $n=\max\{|W|,|Z|\}\geq 3$. We say $W\cup Z$ is a \emph{$\Theta$-separator} of $M$ if either $M|(W\cup Z)$ or $M^*|(W\cup Z)$ is isomorphic to one of the matroids $\Theta_n$ or $\Theta_n^-$. Furthermore, if $N$ is a $3$-connected minor of $M$, we say a $\Theta$-separator \emph{reveals} $N$ in $M$ if either
\begin{enumerate}[(i)]
\item $M|(W\cup Z)\in\{\Theta_n,\Theta_n^-\}$ and at least one element of $Z$ is $N$-revealing in $M$ or, dually,

\item $M^*|(W\cup Z)\in\{\Theta_n,\Theta_n^-\}$ and at least one element of $W$ is $N^*$-revealing in $M^*$.
\end{enumerate}

The following theorem is the above mentioned analogue of Tutte's Wheels-and-Whirls Theorem for elastic elements established in~\cite{dru21}.

\begin{theorem}
\label{elastic2}
Let $M$ be a $3$-connected matroid with no $4$-element fans and no $\Theta$-separators. If $|E(M)|\geq 4$, then $M$ has at least four elastic elements.
\end{theorem}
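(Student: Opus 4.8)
The plan is to prove the equivalent statement that if $M$ is $3$-connected with $|E(M)| \ge 4$ and has at most three elastic elements, then $M$ has a $4$-element fan or a $\Theta$-separator. I would induct on $|E(M)|$; the finitely many base cases (the small $3$-connected matroids, together with the cases where $r(M)$ or $r^*(M)$ is small, in which $\Theta$-separators cannot occur and a direct analysis of segments applies) are checked to have at least four elastic elements. By Bixby's Lemma, for each $e \in E(M)$ at least one of $\si(M/e)$ and $\co(M \ba e)$ is $3$-connected, so $E(M)$ is the disjoint union of the set $X$ of elastic elements, the set $A$ of elements $e$ for which only $\si(M/e)$ is $3$-connected, and the set $B$ for which only $\co(M \ba e)$ is $3$-connected. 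Since $|X| \le 3$ we have $|A| + |B| \ge |E(M)| - 3$, and as both the hypotheses and the conclusion are invariant under duality we may assume $|B| \ge \tfrac12(|E(M)| - 3)$.

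I would next pin down the local structure forced by membership in $A$ and $B$. If $e \in B$ then $\co(M \ba e)$ is not $3$-connected, so $M \ba e$ has a non-trivial $2$-separation; lifting it to $M$ yields a tightly constrained partition $(U, \{e\}, V)$ in which $e$ lies in the closure of neither side, and the standard local analysis of such small $3$-separations in a $3$-connected matroid forces $e$ into a triad of $M$. Dually, each element of $A$ lies in a triangle of $M$. Because $M$ has no $4$-element fan, every triangle of $M$ is disjoint from every triad, so the triangles through $A$ and the triads through $B$ are pairwise non-interfering; moreover the same local analysis shows that the non-elastic elements inside such a triangle (resp.\ triad) force it to extend to a $3$-separating segment (resp.\ cosegment). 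Hence $A \cup B$, which has at least $|E(M)| - 3$ elements, is covered by a bounded family of pairwise non-interfering segment- and cosegment-type $3$-separators.

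The crux is then to show that a $3$-connected matroid with no $4$-element fan cannot contain such a large family of segments and cosegments without having a $\Theta$-separator. Here I would invoke the theory of crossing and sequential $3$-separations (and flowers where needed): starting from a maximal segment $L$ produced above, the non-elastic elements of $L$ must draw in a cosegment $L^*$ meeting $L$, and matching the circuits spanned by $L$ against the cocircuits spanned by $L^*$ forces exactly the circuit pattern in the definition of $\Theta_n$, so that $M|(L \cup L^*)$ or its dual is $\Theta_n$ or $\Theta_n^-$; as the ranks are large, this is a $\Theta$-separator. The remaining configurations — several disjoint small segments/cosegments, or $M$ itself being, up to a couple of elements, a $\Theta_n$ or $\Theta_n^-$ — are eliminated in the same spirit, the latter by computing the elastic elements of these matroids explicitly. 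In each case $M$ has a $4$-element fan or a $\Theta$-separator, which closes the induction.

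The main obstacle is this last step: the bookkeeping that turns ``many non-elastic elements'' into one of the two excluded structures. Determining exactly when a segment meeting a cosegment is forced into the $\Theta_n$ circuit pattern, controlling the elements on the interface of the two, and ruling out every large configuration that is neither a $4$-fan nor a $\Theta$-separator — in particular the delicate near-$\Theta_n$ and near-$\Theta_n^-$ cases — is where essentially all the difficulty lies. The other ingredients (Bixby's Lemma, the disjointness of triangles and triads under the no-$4$-fan hypothesis, and the classification of small $3$-separations) are routine.
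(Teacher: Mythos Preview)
There is a genuine gap, and it appears early. First a bookkeeping slip: you define $B$ as the set of $e$ for which \emph{only} $\co(M\delete e)$ is $3$-connected, i.e.\ $\si(M/e)$ fails, but you then analyse $e\in B$ under the opposite hypothesis that $\co(M\delete e)$ is not $3$-connected. More importantly, whichever reading you intend, the structural claim that follows is false. By Lemma~\ref{vertical1}, $\co(M\delete e)$ fails to be $3$-connected precisely when $M$ has a cyclic $3$-separation $(U,\{e\},V)$, so that $e\in\cl^*(U)\cap\cl^*(V)$; but nothing forces either $U$ or $V$ to have corank two, so $e$ need not lie in any triad. Dually, $\si(M/e)$ failing to be $3$-connected does not force $e$ into a triangle. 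The ``standard local analysis of such small $3$-separations'' you invoke does not exist at this level of generality: both sides of a vertical or cyclic $3$-separation can be arbitrarily large, even in a $3$-connected matroid with no $4$-element fans. Since the remainder of your plan --- covering the non-elastic elements by disjoint segments and cosegments and then reading off the $\Theta_n$ circuit pattern --- rests entirely on this claim, the argument does not get off the ground. The induction hypothesis is also never used.

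For comparison, the paper does not prove Theorem~\ref{elastic2} here at all; it is quoted from~\cite{dru21}, where it is derived from the result recorded in this paper as Lemma~\ref{elastic1}: if $(X,\{e\},Y)$ is a vertical $3$-separation with $Y\cup\{e\}$ maximal, then $X$ already contains two elastic elements unless $X\cup\{e\}$ is a $4$-element fan or $X$ lies in a $\Theta$-separator. Two applications of this, one on each side (or one in $M$ and one in $M^*$), yield the four elastic elements. All of the delicate work --- locating segments and cosegments and recognising the $\Theta_n$ pattern --- takes place inside the proof of Lemma~\ref{elastic1} in~\cite{dru21}, but it is carried out on the minimal side of a single maximal vertical $3$-separation, not via a global inventory of non-elastic elements.
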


The main result of this paper is the next theorem.

\begin{theorem}
\label{thm: main}
Let $M$ be a $3$-connected matroid with no $4$-element fans, and let $N$ be a $3$-connected minor of $M$ such that $M$ has no $\Theta$-separators revealing $N$. If $M$ has at least one $N$-revealing element, then $M$ has at least two $N$-elastic elements.
\end{theorem}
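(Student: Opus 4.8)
The plan is to argue by induction on $|E(M)|-|E(N)|$. If $|E(M)|=|E(N)|$, then $M\cong N$, so $M$ has no $N$-revealing element and the statement holds vacuously; I may therefore assume $|E(M)|>|E(N)|$ and that the result holds for every smaller instance satisfying the hypotheses. Let $e$ be an $N$-revealing element of $M$. Since having no $4$-element fans, being $N$-revealing, and the property that a $\Theta$-separator reveals $N$ are all self-dual, I may replace $(M,N)$ by $(M^{*},N^{*})$ if necessary and assume that $\si(M/e)$ has an $N$-minor and is not $3$-connected. Bixby's Lemma then guarantees that $\co(M\delete e)$ is $3$-connected.

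Next I would extract the structure forced by the failure of $3$-connectivity of $\si(M/e)$. A routine connectivity computation shows that $M/e$ has a vertical $2$-separation $(X,Y)$ with $e\in\cl_M(X)\cap\cl_M(Y)$, so $e$ lies in the guts of a $3$-separation of $M$; in particular $e$ is in a triangle of $M$ or in a larger $3$-separating configuration. Using the absence of $4$-element fans together with the standard theory of $3$-separations in $3$-connected matroids, I would reduce to a short list of shapes for the configuration containing $e$: a triangle through $e$, or a rank-$2$ or corank-$2$ set that is, or extends to, a segment or cosegment of $M$. The place where the hypothesis ``no $\Theta$-separator reveals $N$'' is needed is exactly when such a configuration closes up to a copy of $\Theta_{n}$ or $\Theta_{n}^{-}$: the definition of revealing $N$ is arranged so that an $N$-revealing element lying in the relevant segment or cosegment class of that configuration would force the separator to reveal $N$, a contradiction, so those configurations are excluded.

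Having pinned down the configuration, I would in each case either exhibit two $N$-elastic elements of $M$ outright, or choose a further element $g$ in or near the configuration and pass to the smaller $3$-connected matroid $M'$ equal to $\si(M/g)$ or $\co(M\delete g)$, the $3$-connected side being supplied by Bixby's Lemma. I would then check that $M'$ still has an $N$-minor, still has no $4$-element fans, still has no $\Theta$-separator revealing $N$, and still has an $N$-revealing element inherited from $e$ or from the remainder of the configuration, so that the inductive hypothesis yields two $N$-elastic elements of $M'$; the final step is to verify that these pull back to two $N$-elastic elements of $M$, which should follow because the $3$-separating configuration around $g$ is rigid enough that simplification and cosimplification commute with the minor operations involved.

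The hard part will be this middle case analysis, and specifically keeping all four requirements in the definition of $N$-elasticity under simultaneous control --- $3$-connectedness of $\si(M/\cdot)$, $3$-connectedness of $\co(M\delete\cdot)$, and an $N$-minor in each. This is also the source of the drop from the four elastic elements of Theorem~\ref{elastic2} to only two $N$-elastic elements here: demanding an $N$-minor on both sides can destroy up to two of the elastic elements that the structure theory would otherwise supply, and the most delicate subcases will be those in which the configuration around $e$ is an almost-$\Theta$ arrangement, where the two surviving $N$-elastic elements must be located by hand rather than through Theorem~\ref{elastic2}.
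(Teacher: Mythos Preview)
Your inductive scheme is not how the paper proceeds, and it has gaps that would be hard to close.

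The paper does not induct. From an $N$-revealing element $e$ with, say, $\si(M/e)$ not $3$-connected and having an $N$-minor, it takes a vertical $3$-separation $(X,\{e\},Y)$ of $M$ with $|X\cap E(N)|\le 1$ (Lemmas~\ref{vertical1} and~\ref{2sep}), and then enlarges the $Y$-side until it is \emph{maximal}, obtaining $(X',\{e'\},Y')$. Theorem~\ref{thm: maximal} then yields two $N$-elastic elements in $X'$ directly. The substance of Theorem~\ref{thm: maximal} splits cleanly: the pure elasticity was already established in~\cite{dru21} (Lemma~\ref{elastic1} says $X'$ contains two elastic elements unless $X'\cup\{e'\}$ is a $4$-fan or $X'$ lies in a $\Theta$-separator); what is new is that those elastic elements are automatically $N$-elastic. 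This comes from Lemma~\ref{BS lem 4.5+}: once $Y'\cup\{e'\}$ is closed and $|X'\cap E(N)|\le 1$, every $x\in X'$ has $M/x$ with an $N$-minor and at most one $x'\in X'$ fails to have $M\delete x'$ with an $N$-minor, and Lemma~\ref{lem: N-elastic} shows that this one exception cannot be elastic. So the $N$-minor conditions come essentially for free once one stands on the correct side of the correct separation; there is no hand analysis of ``almost-$\Theta$'' configurations, and no drop from four to two by ``destroying'' elastic elements --- the two elements you find are simply the two guaranteed in $X'$.

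Two concrete problems with your plan. First, the structural reduction is wrong: a vertical $3$-separation $(X,\{e\},Y)$ can have both sides of large rank and corank, and $e\in\cl(X)\cap\cl(Y)$ does not force $e$ into any triangle, segment, or cosegment. ``$e$ is in the guts of a $3$-separation'' is far from ``$e$ lies in a small configuration''. The paper's device of maximizing $Y'\cup\{e'\}$ is precisely what makes the \emph{other} side $X'$ structurally constrained; the constraint lands opposite $e$, not in a neighbourhood of $e$. Second, the inductive step does not go through: passing to $M'=\si(M/g)$ or $\co(M\delete g)$ can create $4$-element fans and new $\Theta$-separators that $M$ lacked, and there is no general mechanism to pull an $N$-elastic element of $M'$ back to one of $M$ --- knowing $\si(M'/f)$ is $3$-connected says nothing about $\si(M/f)$.
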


\noindent Equivalently, provided the two initial conditions hold, that is $M$ has no $4$-element fans and no $\Theta$-separators revealing $N$, Theorem~\ref{thm: main} says that either $M$ has at least two $N$-elastic elements, or whenever $\si(M/e)$ has an $N$-minor, then $\si(M/e)$ is $3$-connected, and whenever $\co(M\delete e)$ has an $N$-minor, then $\co(M\delete e)$ is $3$-connected. The requirement of Theorem~\ref{thm: main} that $M$ has at least one $N$-revealing element is a necessary one. Consider, for example, when $M$ and $N$ have the same rank. However, the requirement of the two initial conditions in the statement of Theorem~\ref{thm: main} are not completely necessary as shown by the next theorem, a refinement of Theorem~\ref{thm: main}, which describes precisely how fans and $\Theta$-separators may prevent the presence of $N$-elastic elements.

Let $M$ be a matroid on ground set $E$. A $k$-separation $(X, E-X)$ of $M$ is \emph{vertical} if $\min\{r(X), r(E-X)\}\geq k$. Let $(X, \{e\}, Y)$ be a partition of $E(M)$. We say that $(X, \{e\}, Y)$ is a {\em vertical $3$-separation} of $M$ if $(X\cup \{e\}, Y)$ and $(X, Y\cup \{e\})$ are both vertical $3$-separations of $M$, and $e\in \cl(X)\cap \cl(Y)$. Furthermore, $Y\cup \{e\}$ is \emph{maximal} if $M$ has no vertical $3$-separation $(X', \{e'\}, Y')$ such that $Y\cup \{e\}$ is a proper subset of $Y'\cup \{e'\}$. 
Theorem~\ref{elastic2} is a consequence of~\cite[Theorem 1]{dru21} which says that if $(X, \{e\}, Y)$ is a vertical $3$-separation of a $3$-connected matroid $M$ such that $Y\cup \{e\}$ is maximal, then $X$ contains at least two elastic elements of $M$ unless $X\cup \{e\}$ is a $4$-element fan or $X$ is contained in a $\Theta$-separator. The next theorem extends this latter result to $N$-elastic elements. Its proof is given in Section~\ref{sec: splitter}.

\begin{theorem}
\label{thm: maximal}
Let $M$ be a $3$-connected matroid and let $N$ be a $3$-connected minor of $M$. Let $(X, \{e\}, Y)$ be a vertical $3$-separation of $M$ such that $M/e$ has an $N$-minor and $|X\cap E(N)|\le 1$. If $(X', \{e'\}, Y')$ is a vertical $3$-separation of $M$ such that $Y\cup \{e\}\subseteq Y'\cup \{e'\}$ and $Y'\cup \{e'\}$ is maximal, then $X'$ contains at least two $N$-elastic elements unless $X'\cup \{e'\}$ is a $4$-element fan or $X'$ is contained in a $\Theta$-separator revealing $N$.
\end{theorem}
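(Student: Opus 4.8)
The plan is to follow the strategy of \cite[Theorem 1]{dru21}, refining each step so that it tracks an $N$-minor. Starting from the given vertical $3$-separation $(X, \{e\}, Y)$ with $M/e$ having an $N$-minor and $|X\cap E(N)|\le 1$, I would first enlarge it to a maximal vertical $3$-separation $(X', \{e'\}, Y')$ with $Y\cup\{e\}\subseteq Y'\cup\{e'\}$; the monotonicity of the condition $|X'\cap E(N)|\le 1$ needs checking, but since $X'\subseteq X\cup\{e\}$ we get $|X'\cap E(N)|\le |X\cap E(N)| + 1$, and a short argument using $e\in\cl(X')\cap\cl(Y')$ (so that $e$ is not needed by any $N$-minor sitting essentially in $Y'$-side, or can be rerouted) should give $|X'\cap E(N)|\le 1$ as well. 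The point of maximality is the structural dichotomy from \cite{dru21}: absent a $4$-element fan or a $\Theta$-separator, the set $X'$ contains at least two elastic elements of $M$. The task is then to upgrade ``elastic'' to ``$N$-elastic'' for at least two elements of $X'$, using that $X'$ is on the opposite side of the separation from (almost all of) $N$.

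The key mechanism is the following: if $f\in X'$ and $f\notin E(N)$, I want to show $f$ is $N$-elastic, i.e.\ both $\si(M/f)$ and $\co(M\ba f)$ are $3$-connected \emph{and} retain an $N$-minor. The $3$-connectivity halves are exactly what \cite[Theorem 1]{dru21} delivers for the (at least two) elastic elements in $X'$, so the real content is the $N$-minor persistence. For the contraction side: since $M/e$ has an $N$-minor and $e\in\cl(X')$, a standard fan/connectivity argument lets me move the contracted element into $X'$ while preserving the $N$-minor --- concretely, I would argue that $M/f$ has an $N$-minor for every $f$ in $X'$ that lies on a suitable line or in the guts, invoking the fact that $|X'\cap E(N)|\le 1$ to guarantee that deleting/contracting the rest of $X'$ does not disturb $N$. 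For the deletion side, $\co(M\ba f)$ having an $N$-minor follows because $(X', \{e'\}, Y')$ being vertical means $X'$ is coindependent-ish relative to the separation; more carefully, one shows that $M\ba f$ still has an $N$-minor by pushing the deletions into $X'$, using $|X'\cap E(N)|\le 1$ again so at most one element of $X'$ is ``protected.'' The subtlety is that an element of $X'$ might equal the single element of $X'\cap E(N)$; this is why the conclusion asks only for \emph{two} $N$-elastic elements rather than that every element of $X'$ is $N$-elastic --- with $|X'|\ge 3$ (fans excluded) and at most one protected element, we still have at least two candidates, provided each unprotected elastic element is genuinely $N$-elastic.

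The main obstacle I anticipate is the bookkeeping needed to rule out the bad case where the two elastic elements guaranteed by \cite{dru21} both fail to carry the $N$-minor on one of the two sides --- for instance, both might be ``$N$-revealing'' rather than $N$-elastic because $\si(M/f)$ loses $3$-connectivity only after we insist on keeping $N$, or because the only $N$-minors available route through $f$. Handling this requires a careful analysis of how $N$-minors interact with the maximal vertical $3$-separator: I expect to need a lemma saying that if $X'$ contains two elastic elements and $|X'\cap E(N)|\le 1$ and there is at least one $N$-revealing element somewhere, then in fact $M|(X'\cup\{e'\})$ or its structure forces those elastic elements to preserve $N$ unless $X'$ sits inside a $\Theta$-separator revealing $N$ (which is the excluded case). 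Establishing that this $\Theta$-separator, when it appears, genuinely \emph{reveals} $N$ in the technical sense of the definition --- i.e.\ that some segment or cosegment element is $N$- or $N^*$-revealing --- is the delicate endpoint, and I would prove it by tracing an $N$-revealing element through the isomorphism $M|(W\cup Z)\cong\Theta_n$ or $\Theta_n^-$, using that $\si(M/e)$ or $\co(M\ba e)$ fails $3$-connectivity exactly when the relevant segment/cosegment structure is present. The remaining steps --- verifying the $3$-connectivity conclusions, checking the edge cases $n=3$ where $\Theta_3^-$ is a $5$-element fan, and confirming the reductions are valid --- are routine given \cite{dru21} and the standard connectivity lemmas.
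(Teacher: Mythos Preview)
Your overall strategy matches the paper's: invoke \cite[Theorem~1]{dru21} (Lemma~\ref{elastic1} here) to obtain two elastic elements in $X'$, then upgrade them to $N$-elastic. However, there are two genuine gaps in your execution.

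First, the ``bookkeeping'' obstacle you anticipate does not actually arise, and seeing why is the heart of the paper's argument. The paper proves a clean lemma (Lemma~\ref{lem: N-elastic}): if $(X,\{e\},Y)$ is a vertical $3$-separation with $M/e$ having an $N$-minor, $|X\cap E(N)|\le 1$, $Y\cup\{e\}$ closed, and $r^*(M)\ge 4$, then \emph{every} elastic element of $X$ is $N$-elastic. The proof uses the strengthened Brettell--Semple lemma (Lemma~\ref{BS lem 4.5+}): all $x\in X$ have $M/x$ with an $N$-minor, and at most one $x'\in X$ has $M\ba x'$ without an $N$-minor; moreover that exceptional $x'$ lies in $\cl^*(Y)$. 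One then checks directly that such an $x'$ cannot be elastic, because $x'\in\cl^*(Y)$ and $x'$ elastic together force $r^*(M)\le 3$. Once you have this, there is no case analysis: since $X'\subseteq X$ (not $X\cup\{e\}$ --- note $e\in Y\cup\{e\}\subseteq Y'\cup\{e'\}$), the two elastic elements of $X'$ are automatically $N$-elastic. Your proposal never isolates the role of $\cl^*(Y)$ or the corank bound, and that is exactly what makes the argument go through without the ad hoc tracking you describe.

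Second, you entirely omit the low-corank case. When $r^*(M)=3$, Lemma~\ref{lem: N-elastic} fails (the paper gives the explicit counterexample $L_8$), and a separate direct argument (Lemma~\ref{lem: corank3}) is needed: in a rank-$3$ matroid a cyclic $3$-separation has both sides of rank~$2$, and one computes explicitly that every element of $X-\cl(Y)$ is $N$-elastic. For the $\Theta$-separator case, the paper's route is also more structured than yours: a lemma from \cite{dru21} (Lemma~\ref{lem: thetamax}) pins down the structure of $S$ relative to $X'$, then Lemma~\ref{BS lem 4.5+} supplies enough cosegment elements with the right $N$-minors to trigger the equivalence in Lemma~\ref{theta reveal}, which is what certifies that $S$ reveals $N$.
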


The elasticity of elements in $\Theta$-separators is described in Section~\ref{sec: prelims}. In particular, a $\Theta$-separator revealing $N$ has at most one $N$-elastic element.
To consider what happens in the statement of Theorem~\ref{thm: maximal} when $X'\cup \{e'\}$ is a $4$-element fan, let $M$ be a $3$-connected matroid. A {\em flower} $\Phi$ is a partition $(P_1, P_2, \ldots, P_n)$ of $E(M)$ such that $|P_i|\ge 2$, and $P_i$ and $P_i\cup P_{i+1}$ are $3$-separating for all $i\in \{1, 2, \ldots, n\}$, where subscripts are interpreted modulo $n$. The parts of $\Phi$ are called {\em petals}. We say $\Phi$ is {\em swirl-like} if $n\ge 4$, and $\sqcap(P_i, P_j)=1$ for all consecutive $i$ and $j$, and $\sqcap(P_i, P_j)=0$ for all non-consecutive $i$ and $j$, where
$$\sqcap(P_i, P_j)=r(P_i)+r(P_j)-r(P_i\cup P_j)$$
is the local connectivity between $P_i$ and $P_j$. For further details regarding flowers, we refer the interested reader to~\cite{oxl04}. The proof of the next theorem is also given in Section~\ref{sec: splitter}.

\begin{theorem}
\label{thm: fans}
Let $M$ be a $3$-connected matroid such that $r(M), r^*(M)\geq 4$, and let $N$ be a $3$-connected minor of $M$. Let $(X, \{e\}, Y)$ be a vertical $3$-separation of $M$ such that $M/e$ has an $N$-minor and $|X\cap E(N)|\le 1$, and let $(X', \{e'\}, Y')$ be a vertical $3$-separation of $M$ such that $Y\cup \{e\}\subseteq Y'\cup \{e'\}$ and $Y'\cup \{e'\}$ is maximal. Suppose that $X'\cup \{e'\}$ is a $4$-element fan $F=(f_1, f_2, f_3, f_4)$, where $e'=f_1$. Then the following hold:
\begin{enumerate}[{\rm (i)}]
\item If $F$ extends to a fan of size at least six, then $F$ contains no elastic elements of $M$.

\item If $F$ extends to a fan of size five but not to a fan of size six, then $F$ contains either exactly one elastic element, namely $f_3$ and this element is $N$-elastic, or $F$ contains no elastic elements.

\item If $F$ does not extend to a fan with more elements, then either it contains exactly two elastic elements, namely $f_2$ and $f_3$, and both are $N$-elastic, or it contains no elastic elements.
\end{enumerate}
Moreover, if $F$ contains no elastic elements, then, up to duality, $M$ has a swirl-like flower $(A, \{f_1, f_2\}, \{f_3,f_4\}, B)$ as shown geometrically in Fig.~\ref{fig: flower}, or there are elements $e$ and $g$ such that $M|(F\cup \{e, g\})\cong M(K_4)$.
\label{fans1}
\end{theorem}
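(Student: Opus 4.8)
The proof will proceed by a careful case analysis on how far the $4$-element fan $F = (f_1, f_2, f_3, f_4)$ can be extended, combined with repeated application of Theorem~\ref{thm: maximal} and the structural facts about fans in $3$-connected matroids. The key point to exploit is that $e' = f_1 \in \cl(X') \cap \cl(Y')$ and $(X', \{e'\}, Y')$ is a maximal vertical $3$-separation with $Y \cup \{e\} \subseteq Y' \cup \{e'\}$; since $Y'\cup\{e'\}$ is maximal, Theorem~\ref{thm: maximal} already tells us $X'$ contains at least two $N$-elastic elements, or $X'$ is contained in a $\Theta$-separator revealing $N$, or $X'\cup\{e'\}$ is a $4$-element fan — and we are in the last case. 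So the content here is to pin down \emph{which} elements of $F$ can be elastic at all (not just $N$-elastic), and the answer depends on the maximal fan containing $F$. First I would recall the standard fact that in a $3$-connected matroid an element of a maximal fan is elastic only if it is an "internal" spoke-or-rim element of that fan in the appropriate sense; the ends of a maximal fan are never elastic because contracting (resp. deleting) them creates a series (resp. parallel) pair that, after cosimplification (resp. simplification), is not $3$-connected — more precisely one gets a $2$-separation that cannot be undone. This immediately handles part~(i): if $F$ extends to a fan of size at least six, then by a parity/position count none of $f_1, f_2, f_3, f_4$ is internal enough, so $F$ contains no elastic elements of $M$.

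For parts~(ii) and~(iii), the plan is to track the labels. Write the maximal fan $F^+$ containing $F$. If $|F^+| = 5$, say $F^+ = (g_0, f_1, f_2, f_3, f_4)$ or $(f_1, f_2, f_3, f_4, g_5)$ (the orientation is forced up to symmetry by $e' = f_1$ and the $3$-separation structure), then the only element of $F$ that is "sufficiently internal" in $F^+$ is $f_3$; for $f_3$ we must check both that $\si(M/f_3)$ and $\co(M\ba f_3)$ are $3$-connected — this is the standard fan-element computation — and that both retain an $N$-minor, which is where we feed in the hypotheses $M/e$ has an $N$-minor and $|X' \cap E(N)| \le 1$ (so $N$ lives essentially in $Y'$ and survives the local surgery at $f_3$). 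Hence either $f_3$ is $N$-elastic, or — if the $N$-minor fails to survive on one side, forcing $f_3$ to be only $N$-revealing — there are no elastic elements, matching the dichotomy in the statement. Part~(iii), where $F$ is itself a maximal fan, is analogous but now both $f_2$ and $f_3$ are internal; the same argument gives that each is elastic, and then each is $N$-elastic by the minor-tracking, or else neither is.

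The "Moreover" clause is the heart of the theorem and the main obstacle. Here we must show that if $F$ contains no elastic elements, then up to duality $M$ has the displayed swirl-like flower $(A, \{f_1,f_2\}, \{f_3,f_4\}, B)$, or $M|(F\cup\{e,g\}) \cong M(K_4)$ for some $e,g$. The approach: by the case analysis above, "no elastic elements in $F$" forces, in cases~(ii) and~(iii), that the candidate internal elements $f_3$ (and possibly $f_2$) fail to be $N$-elastic, meaning a side of the simplification/cosimplification drops the $N$-minor while the $3$-connectivity is fine, or the $3$-connectivity itself fails. Teasing apart the $3$-connectivity failure: when $\si(M/f_3)$ or $\co(M\ba f_3)$ is not $3$-connected, a $2$- or vertical-$3$-separation appears, and I would push this back through the fan-exchange moves to build an explicit flower. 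The partition $(A, \{f_1, f_2\}, \{f_3, f_4\}, B)$ arises by taking $A$ and $B$ to be the two sides of $E(M) - F$ relative to the $3$-separating set $F$ and its associated crossing $3$-separations; verifying $\sqcap$ is $1$ on consecutive petals and $0$ on non-consecutive petals is the routine-but-delicate local-connectivity bookkeeping, using that $\{f_1,f_2\}$ and $\{f_3,f_4\}$ are the "triangle" and "triad" halves of the fan. The alternative $M(K_4)$ outcome corresponds to the degenerate case where $A$ or $B$ (or both) collapses, so there is no room for a genuine $4$-petal swirl and instead the fan closes up into a $K_4$ after adding back two elements $e$ (our distinguished contraction element) and a further $g$. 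I expect the bulk of the work — and the place where one must be most careful — to be this last step: ruling out all other ways the $N$-elasticity can fail and showing the failure is always witnessed by one of these two rigid configurations, which requires invoking the classification of $\Theta$-separators from Section~\ref{sec: prelims} to exclude the $\Theta$-separator branch (already excluded by hypothesis here, since we are in the fan case) and using connectivity lemmas about flowers from~\cite{oxl04}.
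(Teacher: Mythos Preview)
Your proposal misreads where the work lies. In the paper, Theorem~\ref{fans1} is dispatched in a single sentence: it is declared an immediate consequence of Lemma~\ref{elastic fans} (which is \cite[Lemma~13]{dru21}) together with Lemma~\ref{lem: N-elastic}. Lemma~\ref{elastic fans} already establishes, for a maximal fan in a $3$-connected matroid with $r(M),r^*(M)\ge 4$, exactly which elements can be elastic (none if the fan has size at least six; at most $f_3$ if size five; at most $f_2,f_3$ if size four) \emph{and} the full ``Moreover'' clause about the swirl-like flower or the $M(K_4)$ restriction. None of that involves $N$. Lemma~\ref{lem: N-elastic} then says that, under the standing hypotheses on $(X,\{e\},Y)$ with $Y\cup\{e\}$ closed, every elastic element of $X$ is automatically $N$-elastic; since the triad $X'=\{f_2,f_3,f_4\}$ stays inside $X$ after closing $Y\cup\{e\}$, this applies to the elastic elements of $X'$. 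Putting the two together gives the theorem with no further case analysis.

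Your plan, by contrast, conflates ``elastic'' and ``$N$-elastic'' at several points, and this creates a genuine gap rather than just extra work. In part~(ii) you write that ``either $f_3$ is $N$-elastic, or --- if the $N$-minor fails to survive on one side --- there are no elastic elements''. But if $f_3$ were elastic and the $N$-minor failed, $f_3$ would still be elastic; you could not then conclude that $F$ has no elastic elements. The dichotomy in the statement is not two ways the $N$-minor can behave: it is the dichotomy of Lemma~\ref{elastic fans} about ordinary elasticity, followed by the automatic upgrade (via Lemma~\ref{lem: N-elastic}) that whenever $f_3$ \emph{is} elastic it must be $N$-elastic. The same confusion undermines your treatment of the ``Moreover'' clause. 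That clause is triggered by ``$F$ contains no elastic elements'', a condition having nothing to do with $N$, and its conclusion is quoted verbatim from Lemma~\ref{elastic fans}, already proved in~\cite{dru21}. There is no need to build the swirl-like flower from a failed $N$-minor, to invoke $\Theta$-separators, or to appeal to flower machinery from~\cite{oxl04}; all of that work was done in the predecessor paper and is being cited, not redone.
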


\begin{figure}[ht]
	\centering
	\begin{tikzpicture}[scale=1]
	\coordinate (cntrl1) at (-50:2.5) {};
	\coordinate (cntrl2) at (-20:2.5) {};
	\begin{scope}[every node/.style=element]
	\node (e1) at (-210:2) {};
	\coordinate (c2) at (90:1) {};
	\coordinate (c1) at (30:2);
	\node (e3) at (0,4) {};
	\coordinate (b) at (-120:2) {};
	
	\coordinate (cntr1) at ($(e1) + (230:1.5)$);
	\coordinate (cntr2) at ($(c1) + (-60:1.5)$);

	\draw (c1) to (e1);
	\node (e2) at ($(e1)!0.5!(e3)$) {};

	\draw (c1) to  (b);
	\draw (e1) to (b);
	
	\draw (e1) .. controls (cntr1) .. (b);
	\draw (c1) .. controls (cntr2) .. (b);
	\draw (e1) to (e2) to (e3);
	\draw (e2) to (c1);
	\draw (e3) to (c2);
	
	\node (e4) at (intersection of e3--c2 and e2--c1) {};
	
	\end{scope}
	\node[scale=1.0] at ($(e1)+(150:0.5)$) {$f_1$};
	\node[scale=1.0] at ($(e2)+(150:0.5)$) {$f_3$};
	\node[scale=1.0] at ($(e3)+(150:0.5)$) {$f_2$};
	\node[scale=1.0] at ($(e4)+(30:0.5)$) {$f_4$};
	
	\node[scale=1.5] at ($(e1)+(-100:1)$) {$A$};
	\node[scale=1.5] at ($(c1)+(-110:1)$) {$B$};
	
	\end{tikzpicture}
	\caption{The swirl-like flower $(A, \{f_1, f_2\},\{f_3,f_4\}, B)$ of Lemma \ref{elastic fans}, where, if $|F|=5$, then $f_5$ is an element in $B$.}
	\label{fig: flower}
\end{figure}
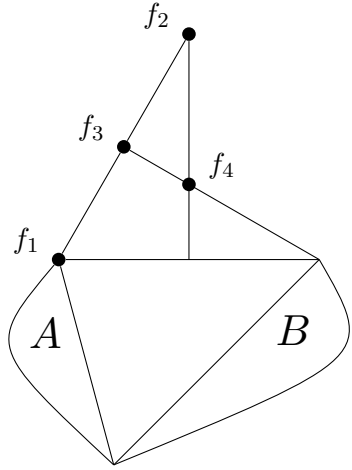

Having established a lower bound on the number of $N$-elastic elements by Theorem~\ref{thm: main}, and having a lower bound on the number of elastic elements by Theorem~\ref{elastic2}, it is natural to consider those matroids with the minimum number of such elements. 
Let $M$ be a matroid. An exact $3$-separating partition $(X, Y)$ of $E(M)$ is a \emph{sequential $3$-separation} if there is an ordering $(e_1, e_2, \ldots, e_k)$ of $X$ or $Y$ such that $\{e_1, e_2, \ldots, e_i\}$ is $3$-separating for all $i\in\{1, 2, \ldots, k\}$. A matroid has \emph{path-width three} if its ground set is sequential; that is, there is an ordering $(e_1,e_2,\ldots,e_n)$ of its ground set such that $\{e_1,e_2,\ldots,e_i\}$ is $3$-separating for all $i\in \{1,2,\ldots,n\}$. Furthermore, a \emph{path of $3$-separations} in a $M$ is an ordered partition $(P_0, P_1, \ldots, P_k)$ of $E(M)$ with the property that $P_0\cup P_1\cup \cdots \cup P_i$ is exactly $3$-separating for all $i\in \{0, 1, \ldots, k-1\}$. The proofs of the next two theorems are given in Section~\ref{sec: min elts}.

\begin{theorem}
\label{min eltsI}
Let $M$ be a $3$-connected matroid with no $4$-element fans and no $\Theta$-separators. If $M$ has exactly four elastic elements, then $M$ has path-width three.
\end{theorem}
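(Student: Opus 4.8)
The plan is to argue that if $M$ has exactly four elastic elements, then its non-elastic elements are forced into a highly constrained structure by Theorem~\ref{elastic2} and its underlying result from~\cite{dru21}. First I would recall that, by the statement of the result of~\cite{dru21} quoted in the excerpt, for any vertical $3$-separation $(X, \{e\}, Y)$ of $M$ with $Y\cup\{e\}$ maximal, the side $X$ contains at least two elastic elements unless $X\cup\{e\}$ is a $4$-element fan or $X$ is contained in a $\Theta$-separator; since $M$ has no $4$-element fans and no $\Theta$-separators, $X$ always contains at least two elastic elements. Dually, $Y$ also contains at least two elastic elements whenever $(X,\{e\},Y)$ is a vertical $3$-separation with $X\cup\{e\}$ maximal. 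Because $M$ has exactly four elastic elements, this means that in any such maximal vertical $3$-separation the two sides each capture exactly two of them, and $e$ itself is not elastic. I would then want to promote this to the statement that the elastic elements are ``spread out'' along a path of $3$-separations.

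The core step is to build an ordering of $E(M)$ witnessing path-width three. I would proceed by taking a vertical $3$-separation, refine it to one whose, say, $Y$-side is maximal, and then induct: the set $X\cup\{e\}$ must itself be sequential because if it contained a vertical $3$-separation ``interior'' to it then that would furnish additional elastic elements beyond the four, a contradiction (here one uses that a minimal such separation on the small side cannot be a $4$-element fan or sit inside a $\Theta$-separator, by hypothesis). Symmetrically, $Y\cup\{e\}$ is handled by duality together with maximality. Stitching the guaranteed orderings of the two ends together through the guts element $e$, and iterating, produces an ordering $(e_1,\dots,e_n)$ of $E(M)$ in which every initial segment is $3$-separating; equivalently, one shows directly that $M$ has no ``internal'' vertical $3$-separation that is non-sequential, and then invokes the standard fact (see~\cite{oxl04}) that a $3$-connected matroid all of whose vertical $3$-separations are sequential, together with the dual statement for cyclic $3$-separations, has path-width three.

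I would expect the main obstacle to be the bookkeeping needed to guarantee that the local orderings on the two sides of a maximal vertical $3$-separation can be concatenated consistently — that is, ruling out the possibility that $M$ has a ``crossing'' pair of $3$-separations forcing a swirl-like or spike-like flower rather than a path. This is exactly where the absence of $4$-element fans and $\Theta$-separators must be used a second time: a crossing pair of $3$-separations in a $3$-connected matroid yields a flower of order at least $3$, and a swirl-like or Vámos-like flower of order $4$ or more would create many more than four elastic elements (each petal boundary contributing its own elastic elements via the $\cite{dru21}$ result applied petal-by-petal), while a fan-like flower is excluded outright. Carefully quantifying ``each petal contributes $\ge 2$ elastic elements, all distinct'' to get a contradiction with the count of exactly four is the delicate part; once that is done, only paths of $3$-separations survive, and path-width three follows.
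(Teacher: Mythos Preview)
Your intuition is right: the proof hinges on showing that two vertical (or cyclic) $3$-separations cannot cross in a way that splits the four elastic elements one-per-corner, and the paper's argument is indeed an uncrossing argument of this flavour. However, your proposed execution has two real gaps.

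First, the step ``$X\cup\{e\}$ must itself be sequential because if it contained a vertical $3$-separation interior to it then that would furnish additional elastic elements'' does not work as stated. Lemma~\ref{elastic1} (the result from~\cite{dru21}) gives two elastic elements in the \emph{small side of a maximal vertical $3$-separation of $M$}; it says nothing about separations ``interior'' to $X\cup\{e\}$, and a non-sequential $3$-separating set need not host a vertical $3$-separation of $M$ whose small side sits inside it. More importantly, you never establish the reduction that the paper actually uses: via Lemmas~\ref{BS lem 6.1} and~\ref{BS lem 6.3}, it suffices to show that there is a fixed $2+2$ partition $\{\{f_1,f_2\},\{g_1,g_2\}\}$ of the elastic elements such that \emph{every} vertical or cyclic $3$-separation $(X,\{e\},Y)$ has $\{f_1,f_2\}\subsetneq X$ and $\{g_1,g_2\}\subsetneq Y$. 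Your ``stitching'' argument does not replace this lemma, and the ``standard fact'' you invoke at the end is precisely Lemma~\ref{BS lem 6.3}, which is not standard and needs to be cited or proved.

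Second, the flower argument you sketch for handling crossing separations (``each petal contributes $\ge 2$ elastic elements, all distinct'') is not how the paper proceeds and would require substantial additional work: petals of a flower are not in general maximal sides of vertical $3$-separations, so Lemma~\ref{elastic1} does not apply petal-by-petal. The paper instead assumes directly that two separations $(X_1,\{e_1\},Y_1)$ and $(X_2,\{e_2\},Y_2)$ put one elastic element in each corner, then uses uncrossing to force $r((Y_1\cap Y_2)\cup\{e_1\})=2$ and $r((X_1\cap Y_2)\cup\{e_1,e_2\})=2$, eventually producing either a forbidden cyclic $3$-separation through an elastic element or a $4$-element segment inside a rank-$3$ cocircuit, which by Lemmas~\ref{segdelete} and~\ref{triangle} yields too many elastic elements. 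This concrete rank-bounding via uncrossing, not flower enumeration, is what closes the argument.
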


\begin{theorem}
\label{min eltsII}
Let $M$ be a $3$-connected matroid with no $4$-element fans, and let $N$ be a $3$-connected minor of $M$ with $|E(N)|\geq 4$ such that $M$ has no $\Theta$-separators revealing $N$. Let $K$ be the set of $N$-revealing elements of $M$. If $M$ has exactly two $N$-elastic elements $s_1$ and $s_2$, then $K$ has an ordering $(e_1, e_2, \ldots, e_k)$ such that
$$(\{s_1, s_2\}, \{e_1\}, \{e_2\}, \ldots, \{e_k\}, E(M)-K\cup\{s_1, s_2\})$$
is a path of $3$-separations in $M$. Moreover, for every such ordering of $K$, both $M/e_i$ and $M\delete e_i$ have an $N$-minor for all $i<k$.
\end{theorem}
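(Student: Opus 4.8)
The plan is to attach to each $N$-revealing element a maximal vertical (or, dually, cyclic) $3$-separation of $M$, to use Theorem~\ref{thm: maximal} to force $\{s_1,s_2\}$ onto one side of each of these, and then to uncross the resulting $3$-separating sets into a chain that refines to the asserted path. Throughout write $R=E(M)-(K\cup\{s_1,s_2\})$. Since $s_1$ and $s_2$ are $N$-elastic, each of $M/s_1$ and $M/s_2$ has an $N$-minor, so $s_1,s_2\notin E(N)$; similarly each element of $K$ is $N$-revealing and so lies outside $E(N)$. Hence $E(N)\subseteq R$ and $|R|\ge 4$. If $K=\emptyset$, then the assertion reduces to the observation that $\{s_1,s_2\}$ is exactly $3$-separating in the $3$-connected matroid $M$ with $|E(M)|\ge 4$, so assume $K\neq\emptyset$.

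Fix $e\in K$. Since $K$, the set of $N$-elastic elements, and the property of being a path of $3$-separations are all unchanged on replacing $(M,N)$ by $(M^*,N^*)$, we may assume that $\si(M/e)$ has an $N$-minor and is not $3$-connected. Then $M$ has a vertical $3$-separation $(X_e,\{e\},Y_e)$ with $M/e$ having an $N$-minor, and, since $N$ is $3$-connected with $|E(N)|\ge 4$, the standard description of how a $3$-connected minor meets a $2$-separation (applied to the $2$-separation of $\si(M/e)$ induced by $(X_e,Y_e)$) lets us choose the labels so that $|X_e\cap E(N)|\le 1$. Extending $Y_e\cup\{e\}$ to a maximal vertical $3$-separation $(X'_e,\{e'_e\},Y'_e)$, Theorem~\ref{thm: maximal} applies; as $M$ has no $4$-element fans and no $\Theta$-separator revealing $N$, both exceptional outcomes are ruled out, so $X'_e$ contains at least two $N$-elastic elements. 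Since $s_1$ and $s_2$ are the only such elements, $\{s_1,s_2\}\subseteq X'_e$. Thus, for each $e\in K$, the set $X'_e$ is exactly $3$-separating with $\{s_1,s_2\}\subseteq X'_e$ and $e\notin X'_e$.

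The heart of the argument is now to show that $\{s_1,s_2\}$ can be enlarged, one element of $K$ at a time, through a chain of exactly $3$-separating sets, terminating at $K\cup\{s_1,s_2\}$; equivalently, that the sets $X'_e$ (together with minimal and maximal variants obtained from them) can be arranged into a chain $\{s_1,s_2\}\subsetneq C_1\subsetneq\cdots\subsetneq C_k=K\cup\{s_1,s_2\}$ of exactly $3$-separating sets whose consecutive differences are singletons of $K$. The tool is submodular uncrossing: a crossing pair of $3$-separating sets has intersection and union of connectivity at most two, so one would like to replace any crossing pair among our sets by a nested one. The substantive obstacle is that an irreparable crossing of two maximal vertical $3$-separations of $M$ produces a flower (swirl-like, as in Fig.~\ref{fig: flower}, or similar) or a $\Theta$-separator of $M$; by the elasticity analysis for flowers and $\Theta$-separators described in Section~\ref{sec: prelims} and by Theorem~\ref{thm: fans}, such a configuration either supplies a third $N$-elastic element of $M$ or is a $\Theta$-separator revealing $N$, each contrary to hypothesis. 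Ruling this out---so that the uncrossing really does produce the chain---is where the hypotheses of no $4$-element fans, no $\Theta$-separator revealing $N$, and exactly two $N$-elastic elements are all used; the steps of the chain are forced to be single elements of $K$ because, by the previous paragraph, every $e\in K$ is separated from $\{s_1,s_2\}$ by an exactly $3$-separating set, so $e$ is neither trapped in the petal containing $\{s_1,s_2\}$ nor left outside $C_k$, and no element of $R$ can precede an element of $K$.

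Given the chain, setting $P_0=\{s_1,s_2\}$, $P_j=C_j\setminus C_{j-1}$ for $1\le j\le k$, and $P_{k+1}=R$ yields a path of $3$-separations of the asserted form. For the final clause, fix any ordering $(e_1,\ldots,e_k)$ of $K$ for which the displayed partition is a path of $3$-separations, and let $i<k$. As $e_i\in K$, up to duality $M/e_i$ has an $N$-minor; to see that $M\delete e_i$ has one too, note that $S_{i-1}=\{s_1,s_2,e_1,\ldots,e_{i-1}\}$ and $S_i=S_{i-1}\cup\{e_i\}$ are both exactly $3$-separating, so $e_i$ lies either in $\cl(S_{i-1})\cap\cl(E(M)-S_i)$ or in $\cl^*(S_{i-1})\cap\cl^*(E(M)-S_i)$, and that, since $i<k$, the far side $E(M)-S_i$ contains $E(N)$ and properly contains the exactly $3$-separating set $E(M)-S_{i+1}$. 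A routine argument on $N$-minors and guts/coguts elements of a $3$-separation whose far side carries the $3$-connected minor $N$ then shows, in either case, that $N$ survives the deletion of $e_i$. The main difficulty, as indicated, is the uncrossing step of the third paragraph, together with the bookkeeping needed to accommodate simultaneously the $N$-revealing elements arising from the contraction side and those arising from the deletion side.
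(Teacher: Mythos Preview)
Your proposal has two substantial gaps.

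\textbf{The chain construction.} You establish $\{s_1,s_2\}\subseteq X'_e$ for each $e\in K$, but you never show $X'_e\subseteq K\cup\{s_1,s_2\}$. Nothing in your argument prevents $X'_e$ from containing elements of $R$, so uncrossing the $X'_e$ need not produce a chain terminating at $K\cup\{s_1,s_2\}$. Your assertion that ``no element of $R$ can precede an element of $K$'' is precisely the point at issue and is not justified; the appeal to flowers and $\Theta$-separators is a red herring, since that machinery governs whether crossing separations can be resolved at all, not whether the resulting sets avoid $R$. The paper closes this gap with Lemma~\ref{BS lem 4.5+}: working with the (non-maximal) vertical $3$-separation $(X,\{e\},Y)$ with $Y\cup\{e\}$ closed and $|X\cap E(N)|\le 1$, every $x\in X$ has $M/x$ with an $N$-minor, and all but at most one $x'\in X$ have $M\delete x'$ with an $N$-minor, where moreover $x'\in\cl^*(Y)$. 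Moving $x'$ to the other side (still a path of $3$-separations by Lemma~\ref{3sep1}) yields $X_e\subseteq K\cup\{s_1,s_2\}$, after which Lemma~\ref{BS lem 6.3} produces the desired path directly---no flower analysis is needed.

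\textbf{The ``moreover'' clause.} This is far from routine. The paper's argument requires a sublemma on the guts $L=\cl(Y)-Y$, then splits into cases according to whether $Y$ spans $M$ and whether a certain prefix set $A_j$ is independent, and uses the hypothesis of exactly two $N$-elastic elements together with the dual of Theorem~\ref{thm: maximal} to determine which side of a cyclic $3$-separation meets $E(N)$ in at most one element. Knowing that $e_i$ is a guts or coguts element with $E(N)$ on the far side does not, by itself, show that $M\delete e_i$ has an $N$-minor; your one-line ``routine argument'' does not supply the missing steps.

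A smaller point: your claim that $s_1,s_2\notin E(N)$ because $M/s_i$ has an $N$-minor is invalid under the standard reading that ``has an $N$-minor'' means ``has a minor isomorphic to $N$''. The paper instead proves $|R|\ge 2$ via Theorem~\ref{elastic2} and a $\Theta$-separator count.
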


The results of this paper and~\cite{dru21} have links to the study of matroids in which the removal of elements is relative to a fixed basis~\cite{bre14, oxl08, whi13}. Let $M$ be a $3$-connected matroid, and suppose that we are given a matrix representation of $M$ in standard form relative to some basis $B$. As well as keeping $3$-connectivity, it is often desirable to remove elements from $M$ while keeping the information displayed by this representation. In particular, we want to avoid pivoting. To this end, we can remove elements either by contracting elements in $B$ or deleting elements in $E(M)-B$. Extending the results in~\cite{oxl08}, Whittle and Williams~\cite{whi13} showed that if $|E(M)|\geq 4$ and $M$ has no $4$-element fans, then $M$ has at least four elements $e$ such that either $e\in B$ and $\si(M/e)$ is $3$-connected, or $e\in E(M)-B$ and $\co(M\delete e)$ is $3$-connected. Furthermore, Brettell and Semple~\cite{bre14} gave a splitter theorem analogue of this result. In Section~\ref{sec: fixed basis}, we show that these results are implied by the work of the current paper and its predecessor~\cite{dru21}.

The paper is organised as follows. The next section contains some preliminaries on connectivity and elastic elements. Section~\ref{sec: splitter} consists of the proofs of Theorems~\ref{thm: main}--\ref{fans1}. Section~\ref{sec: min elts} considers those matroids with the minimum number of elastic elements as well as those matroids with the minimum number of $N$-elastic elements, and establishes Theorems~\ref{min eltsI} and~\ref{min eltsII}. Lastly, Section~\ref{sec: fixed basis} contains new proofs of the main results of \cite{bre14} and~\cite{whi13}, and resolves a question posed in~\cite{whi13}. Throughout the paper, the notation and terminology will follow~\cite{ox11}. In addition, if a matroid $M$ has a $\Theta$-separator, then we will implicitly assume that $M$ has rank and corank at least four.

\section{Preliminaries}
\label{sec: prelims}

\subsection*{Connectivity}

Let $M$ be a matroid with ground set $E$. The \emph{connectivity function} $\lambda_M$ of $M$ is defined on all subsets $X$ of $E$ by
$$\lambda_M(X)=r(X)+r(E-X)-r(M).$$
A subset $X$ of $E$ or a partition $(X, E-X)$ is \emph{$k$-separating} if $\lambda_M(X)\le k-1$ and is {\em exactly $k$-separating} if $\lambda_M(X)=k-1$. A $k$-separating partition $(X, E-X)$ is a \emph{$k$-separation} if $\min \{|X|, |E-X|\}\geq k$. A matroid is \emph{$n$-connected} if it has no $k$-separations for all $k < n$. Furthermore, two $k$-separations $(X_1, Y_1)$ and $(X_2, Y_2)$ of $M$ are said to \emph{cross} if each of the intersections $X_1\cap Y_1$, $X_1\cap Y_2$, $X_2\cap Y_1$, and $X_2\cap Y_2$ is non-empty. The next lemma is a particularly useful tool for handling crossing separations and follows from the fact that the connectivity function of a matroid is submodular. We refer to an application of this lemma as \emph{by uncrossing}.

\begin{lemma}
\label{uncrossing}
Let $M$ be a $k$-connected matroid, and let $X$ and $Y$ be $k$-separating subsets of $E(M)$.
\begin{enumerate}[{\rm (i)}]
\item If $|X\cap Y|\geq k-1$, then $X\cup Y$ is $k$-separating.

\item If $|E(M)-(X\cup Y)|\geq k-1$, then $X\cap Y$ is $k$-separating.
\end{enumerate}
\end{lemma}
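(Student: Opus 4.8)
The plan is to derive both parts from the submodularity of the connectivity function $\lambda_M$, together with the fact that $\lambda_M(A) = \lambda_M(E - A)$ for every $A \subseteq E$. Recall that submodularity says
$$\lambda_M(X) + \lambda_M(Y) \ge \lambda_M(X \cup Y) + \lambda_M(X \cap Y)$$
for all $X, Y \subseteq E$; this is immediate from the submodularity of the rank function applied to $r$ on both the ``$X$-side'' and the ``$E-X$-side'' of the defining formula $\lambda_M(X) = r(X) + r(E - X) - r(M)$. Since $M$ is $k$-connected, we also know that any subset $A$ with $\lambda_M(A) \le k - 2$ must have $\min\{|A|, |E - A|\} \le k - 1$; equivalently, if $|A| \ge k$ and $|E - A| \ge k$ then $\lambda_M(A) \ge k - 1$. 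These are the only ingredients.

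For part (i): since $X$ and $Y$ are $k$-separating we have $\lambda_M(X), \lambda_M(Y) \le k - 1$, so submodularity gives $\lambda_M(X \cup Y) + \lambda_M(X \cap Y) \le 2(k-1)$. By hypothesis $|X \cap Y| \ge k - 1$. If in fact $|X \cap Y| \ge k$, then, because $k$-connectivity forbids small separations, either $|E - (X \cap Y)| \le k - 1$ — in which case $X \cup Y \supseteq X \cap Y$ has complement of size at most $k-1$ and is trivially $k$-separating — or $\lambda_M(X \cap Y) \ge k - 1$, whence $\lambda_M(X \cup Y) \le 2(k-1) - (k-1) = k - 1$, so $X \cup Y$ is $k$-separating. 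If $|X \cap Y| = k - 1$ exactly, then $\lambda_M(X \cap Y) \le k - 1$ holds trivially since any set of size $k-1$ is $(k-1)$-separating-or-better when... — more carefully, a set of size $k-1$ always satisfies $\lambda_M(\cdot) \le k - 1$ because $r(X \cap Y) \le |X \cap Y| = k - 1$ gives $\lambda_M(X \cap Y) = r(X\cap Y) + r(E - (X\cap Y)) - r(M) \le (k-1) + r(M) - r(M) = k-1$. Either way $\lambda_M(X \cap Y) \le k - 1$, and submodularity yields $\lambda_M(X \cup Y) \le (k-1) + (k-1) - \lambda_M(X\cap Y)$; this is not yet $\le k-1$ in general, so the cleaner route is: when $|X \cap Y| \ge k-1$ we instead bound $\lambda_M(X\cap Y) \ge k-1$ unless $|E-(X\cap Y)|\le k-1$, and conclude as above — I would present just the two cases ``$|X\cap Y|\ge k$'' and a direct check, or, most economically, observe that $|X\cap Y|\ge k-1$ forces $\lambda_M(X\cap Y)\ge k-1$ whenever $X\cap Y$ is not trivially $k$-separating, and push $\lambda_M(X\cup Y)\le k-1$ through submodularity. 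Part (ii) is the dual statement: apply part (i) to the $k$-separating sets $E - X$ and $E - Y$, whose intersection is $E - (X \cup Y)$ of size $\ge k - 1$ by hypothesis, obtaining that $(E - X) \cup (E - Y) = E - (X \cap Y)$ is $k$-separating, hence so is $X \cap Y$ since $\lambda_M$ is invariant under complementation.

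The main obstacle is purely bookkeeping at the boundary case $|X \cap Y| = k - 1$ (and symmetrically $|E - (X \cup Y)| = k - 1$ in part (ii)): one must be careful that ``$k$-separating'' only requires $\lambda_M \le k - 1$ and carries no cardinality demand, so the conclusion sets need not be large, and the inference $\lambda_M(X \cap Y) \ge k - 1$ from $k$-connectivity is only available when the complement is also large. I expect the clean writeup to split into the case where one of $X \cap Y$ or its complement is small (conclusion immediate) and the case where both are large (submodularity plus the connectivity bound). No deeper idea is needed; this is a standard uncrossing argument.
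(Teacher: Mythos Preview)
The paper does not give a proof of this lemma; it simply states that the result ``follows from the fact that the connectivity function of a matroid is submodular'' and moves on. Your approach via submodularity plus the lower bound on $\lambda_M$ coming from $k$-connectivity is exactly the standard argument, and your derivation of part~(ii) from part~(i) by complementation is clean and correct.

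The only wrinkle is the boundary case that tripped you up, and it stems from stating the connectivity bound too weakly. You wrote that $k$-connectivity gives ``if $|A|\ge k$ and $|E-A|\ge k$ then $\lambda_M(A)\ge k-1$''; in fact the sharper (and equally elementary) consequence of $k$-connectivity is that if $\lambda_M(A)\le k-2$ then $\min\{|A|,|E-A|\}\le \lambda_M(A)$, so in particular $\min\{|A|,|E-A|\}\ge k-1$ already forces $\lambda_M(A)\ge k-1$. With that version there is no need to split off $|X\cap Y|=k-1$: either $|E-(X\cap Y)|\le k-2$, in which case $|E-(X\cup Y)|\le k-2$ and $\lambda_M(X\cup Y)\le |E-(X\cup Y)|\le k-2$; or $|E-(X\cap Y)|\ge k-1$, in which case $\lambda_M(X\cap Y)\ge k-1$ and submodularity gives $\lambda_M(X\cup Y)\le 2(k-1)-(k-1)=k-1$. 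That two-case split is the whole proof.
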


The following three lemmas are used throughout the paper. The first follows from orthogonality. The second is a consequence of the first, while the third is a consequence of the first and second.

\begin{lemma}
\label{orthogonality}
Let $(X, \{e\}, Y)$ be a partition of the ground set of a matroid $M$. Then $e\in \cl(X)$ if and only if $e\not\in \cl^*(Y)$.
\end{lemma}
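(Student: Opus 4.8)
The plan is to deduce both directions from the standard relationship between the closure operator and the coclosure operator via the rank and corank functions. First I would recall that for a matroid $M$ on ground set $E$ and a subset $S$ with $e \notin S$, the element $e$ lies in $\cl(S)$ precisely when $r(S \cup \{e\}) = r(S)$, and dually $e \in \cl^*(S)$ precisely when $r^*(S \cup \{e\}) = r^*(S)$. Writing $Y$ for the complement of $X \cup \{e\}$, the task reduces to showing that $r(X \cup \{e\}) = r(X)$ if and only if $r^*(Y \cup \{e\}) \neq r^*(Y)$, i.e. $r^*(Y \cup \{e\}) = r^*(Y) + 1$ (the only alternative, since adjoining one element raises corank by at most one).

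The key computation is to expand the corank terms using the identity $r^*(S) = |S| - r(M) + r(E - S)$. Applying this with $S = Y$ gives $r^*(Y) = |Y| - r(M) + r(X \cup \{e\})$, and with $S = Y \cup \{e\}$ gives $r^*(Y \cup \{e\}) = |Y| + 1 - r(M) + r(X)$. Subtracting, $r^*(Y \cup \{e\}) - r^*(Y) = 1 + r(X) - r(X \cup \{e\})$. Hence $r^*(Y \cup \{e\}) = r^*(Y)$ exactly when $r(X \cup \{e\}) = r(X) + 1$, that is, when $e \notin \cl(X)$; equivalently, $e \in \cl(X)$ if and only if $r^*(Y \cup \{e\}) = r^*(Y) + 1$, which is the statement that $e \notin \cl^*(Y)$. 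This chain of equivalences is exactly the claim.

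There is essentially no obstacle here: the lemma is a one-line consequence of the dual rank formula, and the only care needed is bookkeeping with the complement $Y = E - (X \cup \{e\})$ and remembering that $r^*$ is the rank function of $M^*$ so that $\cl^*$ is its closure. If a cleaner exposition is preferred, one can instead phrase it in terms of cocircuits: $e \in \cl^*(Y)$ means $e$ lies in the closure of $Y$ in $M^*$, equivalently every cocircuit of $M$ through $e$ meets $Y$; and $e \notin \cl(X)$ means $e$ lies in some cocircuit of $M$ disjoint from $X$, i.e. contained in $Y \cup \{e\}$ — and these two conditions are visibly complementary. Either route is short; I would present the rank-formula version as the primary argument.
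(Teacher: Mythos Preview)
Your rank-formula argument is correct and complete. The paper does not actually supply a proof of this lemma; it merely asserts that it ``follows from orthogonality,'' meaning the circuit--cocircuit orthogonality axiom (a circuit and a cocircuit cannot meet in exactly one element). That route would run: if $e\in\cl(X)$ there is a circuit $C$ with $e\in C\subseteq X\cup\{e\}$, and if $e\in\cl^*(Y)$ there is a cocircuit $C^*$ with $e\in C^*\subseteq Y\cup\{e\}$, whence $C\cap C^*=\{e\}$, a contradiction; the converse comes from the hyperplane--cocircuit correspondence. Your dual-rank computation is an equally standard and arguably cleaner alternative, since it gives both directions at once from a single identity rather than splitting into two different characterisations.

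One small slip in your aside: the sentence ``$e\in\cl^*(Y)$ \ldots\ equivalently every cocircuit of $M$ through $e$ meets $Y$'' is not the right characterisation; it should read ``there \emph{exists} a cocircuit of $M$ through $e$ contained in $Y\cup\{e\}$.'' This does not affect your primary argument, which stands on its own.
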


\begin{lemma}
\label{3sep1}
Let $X$ be an exactly $3$-separating set in a $3$-connected matroid $M$, and suppose that $e\in E(M)-X$. Then $X\cup \{e\}$ is $3$-separating if and only if $e\in \cl(X)\cup \cl^*(X)$.
\end{lemma}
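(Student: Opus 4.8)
The plan is to compare the connectivity $\lambda_M(X\cup\{e\})$ directly with $\lambda_M(X)=2$, tracking how the ranks of $X$ and of its complement change when $e$ is moved across the partition. First I would set $Y=E(M)-(X\cup\{e\})$, so that $(X,\{e\},Y)$ partitions $E(M)$ and $\lambda_M(X)=r(X)+r(Y\cup\{e\})-r(M)=2$. Expanding $\lambda_M(X\cup\{e\})=r(X\cup\{e\})+r(Y)-r(M)$ and subtracting, one obtains $\lambda_M(X\cup\{e\})=2+a-b$, where $a=r(X\cup\{e\})-r(X)$ and $b=r(Y\cup\{e\})-r(Y)$; since adjoining one element changes rank by at most one, both $a$ and $b$ lie in $\{0,1\}$.

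The next step is to read $a$ and $b$ off in closure terms. By definition $a=0$ precisely when $e\in\cl(X)$. For $b$, note that $b=0$ precisely when $e\in\cl(Y)=\cl\bigl(E(M)-(X\cup\{e\})\bigr)$, and by Lemma~\ref{orthogonality}, applied to the partition $(Y,\{e\},X)$, this is equivalent to $e\notin\cl^*(X)$; hence $b=1$ if and only if $e\in\cl^*(X)$.

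With the identity $\lambda_M(X\cup\{e\})=2+a-b$ in hand, both implications are immediate. If $e\in\cl(X)$ then $a=0$, so $\lambda_M(X\cup\{e\})=2-b\le 2$; if $e\in\cl^*(X)$ then $b=1$, so $\lambda_M(X\cup\{e\})=1+a\le 2$; in either case $X\cup\{e\}$ is $3$-separating. Conversely, if $e\notin\cl(X)\cup\cl^*(X)$ then $a=1$ and $b=0$, whence $\lambda_M(X\cup\{e\})=3$ and $X\cup\{e\}$ is not $3$-separating.

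I do not expect a genuine obstacle here beyond bookkeeping; the only delicate point is the passage from the hypothesis ``$e\in\cl^*(X)$'' to a statement about the rank increment $b$ on the complement side, which is exactly what the orthogonality lemma, Lemma~\ref{orthogonality}, delivers. Note that $3$-connectedness of $M$ is not actually needed for the argument; it is simply the ambient hypothesis under which the lemma is applied elsewhere in the paper.
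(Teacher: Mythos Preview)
Your argument is correct and is exactly the approach the paper has in mind: it states the lemma as an immediate consequence of Lemma~\ref{orthogonality}, and your rank-increment computation $\lambda_M(X\cup\{e\})=2+a-b$ together with the orthogonality translation of $b$ is the standard way to spell this out. Nothing further is needed.
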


\begin{lemma}
Let $(X, Y)$ be an exactly $3$-separating partition of a $3$-connected matroid $M$, and suppose that $|X|\ge 3$ and $e\in X$. Then $(X-\{e\}, Y\cup \{e\})$ is exactly $3$-separating if and only if $e$ is in exactly one of $\cl(X-\{e\})\cap \cl(Y)$ and $\cl^*(X-\{e\})\cap \cl^*(Y)$.
\label{3sep2}
\end{lemma}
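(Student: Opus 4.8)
The plan is to compute $\lambda_M(X-\{e\})$ directly, by tracking how the ranks of the two sides change as $e$ is moved from $X$ into $Y$, and then to convert the resulting closure conditions into the coclosure conditions of the statement using orthogonality (Lemma~\ref{orthogonality}).

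Write $X'=X-\{e\}$ and $Y'=Y\cup\{e\}$, so that $(X',\{e\},Y)$ is a partition of $E(M)$. First I would record two elementary rank facts: $r(X')=r(X)$ if $e\in\cl(X')$ and $r(X')=r(X)-1$ otherwise; and $r(Y')=r(Y)$ if $e\in\cl(Y)$ and $r(Y')=r(Y)+1$ otherwise. Substituting these into $\lambda_M(X')=r(X')+r(Y')-r(M)$ and using $\lambda_M(X)=r(X)+r(Y)-r(M)=2$ gives $\lambda_M(X')=2+\delta_1+\delta_2$, where $\delta_1=r(X')-r(X)\in\{0,-1\}$ and $\delta_2=r(Y')-r(Y)\in\{0,1\}$. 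Since $|X'|=|X|-1\ge 2$ and $|Y'|\ge 2$, the $3$-connectivity of $M$ forces $\lambda_M(X')\ge 2$, so $\delta_1+\delta_2\ge 0$; hence $(X',Y')$ is exactly $3$-separating if and only if $\delta_1+\delta_2=0$, that is, if and only if $(\delta_1,\delta_2)$ equals $(0,0)$ or $(-1,1)$.

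It then remains to match these two alternatives with the membership conditions of the statement. The case $(\delta_1,\delta_2)=(0,0)$ says precisely that $e\in\cl(X')$ and $e\in\cl(Y)$, i.e.\ $e\in\cl(X')\cap\cl(Y)$. The case $(\delta_1,\delta_2)=(-1,1)$ says that $e\notin\cl(X')$ and $e\notin\cl(Y)$, which by Lemma~\ref{orthogonality}, applied to the partition $(X',\{e\},Y)$ with each of $X'$ and $Y$ in turn as the distinguished side, is equivalent to $e\in\cl^*(Y)$ and $e\in\cl^*(X')$, i.e.\ $e\in\cl^*(X')\cap\cl^*(Y)$. Finally, Lemma~\ref{orthogonality} also shows these two membership conditions are mutually exclusive: $e\in\cl(X')$ implies $e\notin\cl^*(Y)$, so $e$ cannot lie in both $\cl(X')\cap\cl(Y)$ and $\cl^*(X')\cap\cl^*(Y)$. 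Hence ``$(\delta_1,\delta_2)\in\{(0,0),(-1,1)\}$'' is exactly the condition that $e$ lies in precisely one of $\cl(X')\cap\cl(Y)$ and $\cl^*(X')\cap\cl^*(Y)$, which completes the argument.

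I do not expect a genuine obstacle here; the only step needing a little care is the use of $3$-connectivity --- together with the hypothesis $|X|\ge 3$, which ensures $|X'|\ge 2$ --- to rule out the remaining possibility $(\delta_1,\delta_2)=(-1,0)$, since that would give $\lambda_M(X')=1$ and hence a $2$-separation $(X',Y')$ of $M$, a contradiction.
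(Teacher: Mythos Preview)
Your proof is correct and follows essentially the route the paper indicates: the paper does not spell out a proof but says Lemma~\ref{3sep2} is a consequence of Lemmas~\ref{orthogonality} and~\ref{3sep1}, and your direct rank computation is exactly the content of Lemma~\ref{3sep1} unpacked, combined with orthogonality and $3$-connectivity in the way one would expect. One very minor remark: you assert $|Y'|\ge 2$ without comment, but this is automatic since $\lambda_M(X)=2$ forces $|Y|\ge 2$.
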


\subsection*{Vertical and cyclic connectivity}

Recalling the terminology and notation from the introduction, a $k$-separation $(X, Y)$ of a matroid $M$ is \emph{vertical} if $\min\{r(X), r(Y)\}\geq k$. A partition $(X, \{e\}, Y)$ of $E(M)$ is a \emph{vertical $3$-separation} of $M$ if $(X\cup \{e\}, Y)$ and $(X, Y\cup \{e\})$ are both vertical $3$-separations of $M$ and $e\in \cl(X)\cap \cl(Y)$. Furthermore, $Y\cup \{e\}$ is {\em maximal} in this separation if there is no vertical $3$-separation $(X', \{e'\}, Y')$ of $M$ such that $Y\cup \{e\}$ is a proper subset of $Y'\cup \{e'\}$. A $k$-separation $(X, Y)$ of $M$ is {\em cyclic} if both $X$ and $Y$ contain circuits. Vertical $k$-separations and cyclic $k$-separations are dual concepts. In particular, it is straightforward to show (for example, see \cite[Lemma 9]{dru21}) that if $(X, Y)$ is a partition of the ground set of a $k$-connected matroid $M$, then $(X,Y)$ is a cyclic $k$-separation of $M$ if and only if $(X, Y)$ is a vertical $k$-separation of $M^*$. As such, we say a partition $(X, \{e\}, Y)$ of the ground set of a $3$-connected matroid $M$ is a \emph{cyclic $3$-separation} if $(X, \{e\}, Y)$ is a vertical $3$-separation of $M^*$.

The first of the next two lemmas indicates why vertical $3$-separations and cyclic $3$-separations arise in the context of non-elastic elements. In combination with the duality link between vertical $3$-separations and cyclic $3$-separations, the first lemma is a straightforward strengthening of~\cite[Lemma~3.1]{oxl08} and the second lemma follows from Lemma~\ref{3sep2}.

\begin{lemma}
\label{vertical1}
Let $M$ be a $3$-connected matroid, and suppose that $e\in E(M)$. Then $\si(M/e)$ is not $3$-connected if and only if $M$ has a vertical $3$-separation $(X, \{e\}, Y)$. Dually, $\co(M\delete e)$ is not $3$-connected if and only if $M$ has a cyclic $3$-separation $(X, \{e\}, Y)$.
\end{lemma}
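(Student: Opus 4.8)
The plan is to prove the first biconditional directly and to deduce the ``dually'' statement from it by matroid duality. For the reduction, recall that $(M\delete e)^*=M^*/e$ and that $\co(N)=(\si(N^*))^*$, so $\co(M\delete e)=(\si(M^*/e))^*$; since a matroid is $3$-connected exactly when its dual is, $\co(M\delete e)$ fails to be $3$-connected if and only if $\si(M^*/e)$ does. Applying the first biconditional to the $3$-connected matroid $M^*$ shows this is equivalent to $M^*$ having a vertical $3$-separation $(X,\{e\},Y)$, which, by the duality between vertical and cyclic $3$-separations recalled above, says precisely that $M$ has a cyclic $3$-separation $(X,\{e\},Y)$. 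Thus it suffices to prove that $\si(M/e)$ is not $3$-connected if and only if $M$ has a vertical $3$-separation $(X,\{e\},Y)$.

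For the ``if'' direction, let $(X,\{e\},Y)$ be a vertical $3$-separation of $M$. Since $e\in\cl(X)\cap\cl(Y)$, contracting $e$ lowers each of $r(X)$ and $r(Y)$ by exactly one, so a direct computation with the connectivity function gives $\lambda_{M/e}(X)=\lambda_M(X\cup\{e\})-1\le 1$. Because $(X\cup\{e\},Y)$ and $(X,Y\cup\{e\})$ are vertical, we also have $r_{M/e}(X)=r_M(X)-1\ge 2$ and $r_{M/e}(Y)=r_M(Y)-1\ge 2$, so each of $X$ and $Y$ meets $E(\si(M/e))$ in at least two elements. As the connectivity function is unaffected by deleting loops and elements parallel to surviving elements, the partition $(X,Y)$ induces a $2$-separation of $\si(M/e)$, and hence $\si(M/e)$ is not $3$-connected.

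For the ``only if'' direction, suppose $\si(M/e)$ is not $3$-connected. Since $M$ is $3$-connected, a routine argument with the connectivity function shows $M/e$, and hence $\si(M/e)$, is connected, so $\si(M/e)$ has a $2$-separation $(X',Y')$ with $|X'|,|Y'|\ge 2$. Distributing the loops and the non-representative parallel elements of $M/e$ in the usual way gives a partition $(X,Y)$ of $E(M/e)$ with $X'\subseteq X$, $Y'\subseteq Y$, $|X|,|Y|\ge 2$, and $\lambda_{M/e}(X)=\lambda_{\si(M/e)}(X')\le 1$. The key step is to show $e\in\cl_M(X)\cap\cl_M(Y)$: if $e\notin\cl_M(Y)$, then $r_{M/e}(Y)=r_M(Y)$, and substituting the rank identities $r_M(X\cup\{e\})=r_{M/e}(X)+1$ and $r(M)=r(M/e)+1$ into $\lambda_M(X\cup\{e\})=r_M(X\cup\{e\})+r_M(Y)-r(M)$ yields $\lambda_M(X\cup\{e\})=\lambda_{M/e}(X)\le 1$; thus $(X\cup\{e\},Y)$ is a $2$-separating partition of $M$ with both sides of size at least two, contradicting $3$-connectivity of $M$, and the symmetric argument gives $e\in\cl_M(X)$. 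With $e\in\cl_M(X)\cap\cl_M(Y)$ in hand, and using $r_{M/e}(X)\ge r_{\si(M/e)}(X')\ge 2$ and likewise for $Y$, we obtain $r_M(X)=r_{M/e}(X)+1\ge 3$ and $r_M(Y)=r_{M/e}(Y)+1\ge 3$, and the same rank identities turn $\lambda_{M/e}(X)\le 1$ into $\lambda_M(X\cup\{e\})\le 2$ and $\lambda_M(Y\cup\{e\})\le 2$. Hence $(X\cup\{e\},Y)$ and $(X,Y\cup\{e\})$ are vertical $3$-separations of $M$, so $(X,\{e\},Y)$ is a vertical $3$-separation of $M$, as required.

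I expect the main obstacle to be the ``only if'' direction, and within it the verification that $e$ lies in the closure of \emph{both} $X$ and $Y$: this is precisely where $3$-connectivity of $M$ is used---to rule out the degenerate possibility that contracting $e$ creates the separation with nothing to spare---and it is also what strengthens the corresponding weaker statement in~\cite[Lemma~3.1]{oxl08} to the present form with $e\in\cl(X)\cap\cl(Y)$. The surrounding work---tracking how rank behaves under contraction and under simplification, and checking the cardinality and rank thresholds demanded by a \emph{vertical} $3$-separation---is routine but must be carried out with care.
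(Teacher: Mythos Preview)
Your proof is correct. The paper does not actually supply a proof of this lemma: it simply remarks that, in combination with the duality between vertical and cyclic $3$-separations, the statement is ``a straightforward strengthening of~\cite[Lemma~3.1]{oxl08}'', and leaves it at that. Your treatment of the dual statement is exactly the reduction the paper alludes to, and your direct argument for the first biconditional is a clean way to carry out the strengthening the paper invokes. One minor point worth making explicit in the ``if'' direction: when you assert that the vertical $2$-separation $(X,Y)$ of $M/e$ induces a $2$-separation of $\si(M/e)$, you are implicitly using the standard fact that a connected matroid is vertically $3$-connected if and only if its simplification is $3$-connected; citing this (or noting that representatives can be chosen so that each of $X$ and $Y$ retains an independent pair) would make the step airtight, but it is entirely routine.
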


\begin{lemma}
\label{vertical2}
Let $M$ be a $3$-connected matroid. If $(X, \{e\}, Y)$ is a vertical $3$-separation of $M$, then $(X -\cl(Y), \{e\}, \cl(Y)-\{e\})$ is also a vertical $3$-separation of $M$. Dually, if $(X, \{e\}, Y)$ is a cyclic $3$-separation of $M$, then $(X-\cl^*(Y), \{e\}, \cl^*(Y)-\{e\})$ is also a cyclic $3$-separation of $M$.
\end{lemma}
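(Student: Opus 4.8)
The plan is to push every element of $X$ that lies in $\cl(Y)$ across to the $Y$-side and check that what remains is still a vertical $3$-separation. Set $Z=X\cap\cl(Y)$. Since $e\in\cl(Y)$ we have $\cl(Y\cup\{e\})=\cl(Y)$, and as $Z$, $Y$, $\{e\}$ partition $\cl(Y)$, the proposed partition is precisely $(X-Z,\{e\},Y\cup Z)$, with $Y\cup Z=\cl(Y)-\{e\}$ and $\cl(Y\cup Z)=\cl(Y)$. The first step is to record the basic rank bookkeeping: since $M$ is $3$-connected, both $(X\cup\{e\},Y)$ and $(X,Y\cup\{e\})$ are exactly $3$-separating, so $\lambda_M(Y)=2$; because $e\in\cl(X)\cap\cl(Y)$ we have $r(X\cup\{e\})=r(X)$ and $r(Y\cup\{e\})=r(Y)$, whence $r(X)+r(Y)=r(M)+2$, and verticality gives $r(X),r(Y)\ge 3$.

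The heart of the argument is to compute the connectivity of the new sides. Because $Z\subseteq\cl(Y)$ we get $r(\cl(Y))=r(Y\cup Z)=r(Y)$, while the complements are $E-\cl(Y)=X-Z$ and $E-(Y\cup Z)=(X-Z)\cup\{e\}$; since these complements are contained in $E-Y$ their ranks do not exceed $r(E-Y)$, so $\lambda_M(\cl(Y))\le\lambda_M(Y)=2$ and $\lambda_M(Y\cup Z)\le 2$. For the matching lower bounds I would first rule out $X-Z$ being small: if $X-Z=\emptyset$ then $E=\cl(Y)$, forcing $r(X)=2$; and if $|X-Z|=1$ then its lone element is not a coloop, so $\lambda_M(\cl(Y))=1$, again forcing $r(X)=2$ -- both contradicting $r(X)\ge 3$. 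Hence $|X-Z|\ge 2$, and since $|\cl(Y)|\ge|Y|\ge 3$, $3$-connectivity gives $\lambda_M(\cl(Y))=\lambda_M(Y\cup Z)=2$. Feeding this back into $r(X)+r(Y)=r(M)+2$ shows $r(X-Z)=r(X)\ge 3$ (so in fact $|X-Z|\ge 3$) and $r((X-Z)\cup\{e\})=r(X)$, that is, $e\in\cl(X-Z)$.

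It then only remains to read off the definition. Both $((X-Z)\cup\{e\},Y\cup Z)$ and $(X-Z,\cl(Y))$ are $3$-separations by the $\lambda$-computations above, each has both sides of size and of rank at least $3$, hence both are vertical $3$-separations, and $e\in\cl(X-Z)\cap\cl(Y\cup Z)$ since $e\in\cl(X-Z)$ and $\cl(Y\cup Z)=\cl(Y)\ni e$; thus $(X-\cl(Y),\{e\},\cl(Y)-\{e\})$ is a vertical $3$-separation. The dual statement follows immediately by applying this to $M^*$, using the stated equivalence between cyclic $3$-separations of $M$ and vertical $3$-separations of $M^*$ together with $\cl^*=\cl_{M^*}$. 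I expect the only genuine obstacle to be the degeneracy analysis of the middle paragraph -- confirming that the trimmed set $X-\cl(Y)$ is nonempty and still carries rank at least $3$, so that it really yields a vertical $3$-separation -- since everything else is routine manipulation of the connectivity function; an alternative is to move the elements of $Z$ over one at a time via Lemma~\ref{3sep2} (the hypothesis $z\in\cl(Y)$ makes $z\notin\cl^*$ of the complementary side automatic), but that route still requires the same rank identity to keep $z$ inside the closure of the shrinking $X$-side at each step.
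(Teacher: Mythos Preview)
Your proof is correct. The paper does not give a detailed argument for this lemma, stating only that it ``follows from Lemma~\ref{3sep2}'', which is precisely the element-by-element alternative you describe at the end: one moves each $z\in Z$ across using that $z\in\cl(Y)\subseteq\cl(Y')$ together with orthogonality to verify the hypothesis of Lemma~\ref{3sep2}, and invokes $3$-connectivity at each step to keep the separation exact. Your direct approach computes $\lambda_M(\cl(Y))$ and $\lambda_M(\cl(Y)-\{e\})$ in one go rather than inductively; this is slightly cleaner because it isolates the single nontrivial point (the degeneracy analysis showing $|X-\cl(Y)|\ge 2$, hence $r(X-\cl(Y))=r(X)\ge 3$) rather than burying it inside the termination condition of an inductive transfer. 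As you correctly observe, the element-by-element route does not avoid this check: one still needs $|X-Z|\ge 2$ so that the final application of $3$-connectivity is valid. Either way the content is the same, and your writeup is more explicit than what the paper provides.
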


\subsection*{Fans, segments, and \texorpdfstring{$\Theta$}{Theta}-separators}

Let $M$ be a $3$-connected matroid. A subset $F$ of $E(M)$ with at least three elements is a \emph{fan} if there is an ordering $(f_1, f_2, \ldots, f_k)$ of the elements of $F$ such that
\begin{enumerate}[(i)]
\item for all $i\in \{1, 2, \ldots, k-2\}$, the triple $\{f_i, f_{i+1}, f_{i+2}\}$ is either a triangle or a triad, and

\item for all $i\in \{1, 2, \ldots, k-3\}$, if $\{f_i, f_{i+1}, f_{i+2}\}$ is a triangle, then $\{f_{i+1}, f_{i+2}, f_{i+3}\}$ is a triad, while if $\{f_i, f_{i+1}, f_{i+2}\}$ is a triad, then $\{f_{i+1}, f_{i+2}, f_{i+3}\}$ is a triangle.
\end{enumerate}
If $k\geq 4$, then the elements $f_1$ and $f_k$ are the \emph{ends} of $F$. Furthermore, if $\{f_1, f_2, f_3\}$ is a triangle, then $f_1$ is a {\em spoke-end}; otherwise, $f_1$ is a {\em rim-end}. It is elementary to show that if $F=(f_1, f_2, f_3, f_4)$ is a $4$-element fan with spoke-end $f_1$ in a $3$-connected matroid $M$ of rank at least four, then $(\{f_2, f_2, f_4\}, \{f_1\}, E(M)-F)$ is a vertical $3$-separation of $M$ in which $E(M)-\{f_2, f_3, f_4\}$ is maximal. The next result is \cite[Lemma~13]{dru21} and details when elements of a $4$-element fan are elastic.
\begin{lemma}
\label{elastic fans}
Let $M$ be a $3$-connected matroid such that $r(M),r^*(M)\geq 4$, and let $F=(f_1, f_2, \ldots, f_n)$ be a maximal fan of $M$.
\begin{enumerate}[{\rm (i)}]
\item If $n\ge 6$, then $F$ contains no elastic elements of $M$.

\item If $n=5$, then $F$ contains either exactly one elastic element, namely $f_3$, or no elastic elements of $M$.

\item If $n=4$, then $F$ contains either exactly two elastic elements, namely $f_2$ and $f_3$, or no elastic elements of $M$.
\end{enumerate}
Moreover, if $n\in \{4, 5\}$ and $F$ contains no elastic elements, then, up to duality, $M$ has a swirl-like flower $(A, \{f_1, f_2\}, \{f_3,f_4\}, B)$ as shown geometrically in Fig.~\ref{fig: flower}, or $n=5$ and there is an element $g$ such that $M|(F\cup\{g\})\cong M(K_4)$.
\end{lemma}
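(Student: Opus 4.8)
The plan is to decide, for each element $f_i$ of the fan, whether $\si(M/f_i)$ and $\co(M\delete f_i)$ are $3$-connected. By Lemma~\ref{vertical1} this is the question of whether $M$ has a vertical or a cyclic $3$-separation through $f_i$, and such separations are analysed using the closure characterisations of Lemmas~\ref{orthogonality}--\ref{3sep2}, the normalisation of Lemma~\ref{vertical2}, and uncrossing (Lemma~\ref{uncrossing}). Since passing from $M$ to $M^*$ interchanges the two conditions in the definition of elasticity and swaps triangles with triads, we may assume $\{f_1,f_2,f_3\}$ is a triangle; then $f_1$ is a spoke-end and the triple $\{f_j,f_{j+1},f_{j+2}\}$ is a triangle precisely when $j$ is odd. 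If $F=E(M)$ then $M$ is a wheel or a whirl and a short direct check (or the argument below, since then $|F|\ge 6$) shows $M$ has no elastic elements; so assume $F\subsetneq E(M)$.

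The first stage reduces the problem to a short list of candidates. For the spoke-end $f_1$: in $M/f_1$ the elements $f_2$ and $f_3$ become parallel while the triad $\{f_2,f_3,f_4\}$ survives as a cocircuit, so $\si(M/f_1)$ has a series pair and rank $r(M)-1\ge 3$, hence is not $3$-connected, and $f_1$ is not elastic. Dually, the end $f_n$ is not elastic. For $3\le i\le n-2$ the element $f_i$ is the middle of the triple $\{f_{i-1},f_i,f_{i+1}\}$ and lies in two triples of the opposite type at distance two; removing $f_i$ by the operation that collapses those two triples produces two parallel (or two series) pairs, and the same-type triple at distance three from $f_i$ — which is present unless $i=3$ and $n=5$ — then collapses to a series (or parallel) pair in the corresponding $(co)$simplification, so $f_i$ is not elastic. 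Finally, for $n\ge 5$ the triad $\{f_2,f_3,f_4\}$ and the triangle $\{f_3,f_4,f_5\}$ combine to create a parallel pair in $\co(M\delete f_2)$, so $f_2$, and symmetrically $f_{n-1}$, is not elastic unless $n=4$. This yields the bounds in parts (i)--(iii): no element of $F$ is elastic when $n\ge 6$; only $f_3$ can be elastic when $n=5$; only $f_2$ and $f_3$ can be elastic when $n=4$.

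It remains to treat the surviving candidates and to establish the ``both or neither'' statement and the moreover clause. Suppose $n=5$ and $\si(M/f_3)$ is not $3$-connected, so $M$ has a vertical $3$-separation $(X,\{f_3\},Y)$; by Lemma~\ref{vertical2} we may take $Y\cup\{f_3\}$ to be a flat, and then uncrossing this separation with the $3$-separating sets the fan supplies — the triangles $\{f_1,f_2,f_3\}$, $\{f_3,f_4,f_5\}$, the triad $\{f_2,f_3,f_4\}$, and $F$ — together with the maximality of $E(M)-\{f_2,f_3,f_4\}$ (a consequence of $F$ being maximal) forces either $F$ to extend to a restriction isomorphic to $M(K_4)$ or $M$, up to duality, to have the displayed swirl-like flower; the cyclic case is dual. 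In each such configuration an extra triangle or triad is available that collapses in $\si(M/f_3)$ or $\co(M\delete f_3)$, so $f_3$ is not elastic there. For $n=4$, the same uncrossing analysis applied to a hypothesised separation through $f_2$ (or through $f_3$) produces the swirl-like flower $(A,\{f_1,f_2\},\{f_3,f_4\},B)$ of Fig.~\ref{fig: flower}, and a symmetric computation shows this flower obstructs the elasticity of $f_2$ and of $f_3$ at the same time; conversely, if neither is elastic then one of the two separations is present and hence so is the flower. This gives the dichotomy and the moreover clause.

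The main obstacle is this last stage: pinning down exactly which vertical and cyclic $3$-separations can run through $f_2$ or $f_3$ when $n\in\{4,5\}$, and matching each with the swirl-like flower or the $M(K_4)$-restriction. Unlike the earlier cases, the fan does not by itself provide a triple at the right distance to manufacture a small $(co)$circuit, so one must instead uncross the hypothesised separation against several fan-induced $3$-separating sets and exploit the maximality of $F$ to prevent the separation from being pushed past $F$, keeping track throughout of the relevant closures via Lemmas~\ref{orthogonality}--\ref{3sep2}. A recurring subsidiary point, used at each appeal to a $2$-element $(co)$circuit, is that $r(M),r^*(M)\ge 4$ guarantees that the $(co)$simplification in question is large enough for such a $(co)$circuit to genuinely certify failure of $3$-connectivity.
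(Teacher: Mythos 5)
First, a point of comparison: the paper itself does not prove Lemma~\ref{elastic fans}; it is imported verbatim as \cite[Lemma~13]{dru21}, so there is no in-paper argument to measure yours against, and I am judging the proposal on its own terms. Your first stage is sound and is the standard argument: up to duality, and via Lemma~\ref{vertical1}, locating a triangle and a triad of the fan that share two elements and collapse in succession under the relevant (co)simplification correctly eliminates $f_1$, $f_n$, every interior $f_i$ with a same-type triple at distance three, and (for $n\ge 5$) $f_2$ and $f_{n-1}$. This disposes of (i) and reduces the candidates to $f_3$ when $n=5$ and to $\{f_2,f_3\}$ when $n=4$.

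The genuine gap is your second stage, which is where essentially all of the remaining content of the lemma lives and which you only assert. Two things are left unproved. First, for $n=4$, part (iii) is not the statement that only $f_2$ and $f_3$ can be elastic; it is the statement that $f_2$ is elastic if and only if $f_3$ is. Your plan --- a vertical or cyclic $3$-separation through either one forces the swirl-like flower of Fig.~\ref{fig: flower}, and that flower in turn obstructs both --- has the right logical shape, but neither implication is carried out. Second, the entire ``moreover'' clause rests on the claim that a vertical $3$-separation $(X,\{f_3\},Y)$ (or one through $f_2$), uncrossed against the fan's $3$-separating sets and combined with the maximality of $F$, forces either the flower $(A,\{f_1,f_2\},\{f_3,f_4\},B)$ or, when $n=5$, an element $g$ with $M|(F\cup\{g\})\cong M(K_4)$. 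You never perform these uncrossings, never identify the petals $A$ and $B$ or verify the local-connectivity values $\sqcap(P_i,P_j)$ that make the flower swirl-like, and never explain why the $M(K_4)$ configuration is the only other outcome. Your own closing paragraph concedes that this is ``the main obstacle,'' and it is precisely the nontrivial part of \cite[Lemma~13]{dru21}; as written, the proposal establishes only the ``at most'' halves of (i)--(iii) and none of the dichotomy in (ii)--(iii) or the structural conclusion.
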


A \emph{segment} of a matroid $M$ is a subset $X$ of $E(M)$ such that $M|X$ is isomorphic to a rank-$2$ uniform matroid. The next lemma is elementary and is used repeatedly in this paper.

\begin{lemma}
\label{segdelete}
Let $M$ be a $3$-connected matroid and let $L$ be a segment of $M$ with at least four elements. If $\ell\in L$, then $M\delete \ell$ is $3$-connected.
\end{lemma}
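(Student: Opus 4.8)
The plan is to exploit the fact that a segment with many elements is very "flat": contracting all but two of its elements collapses the rest, and deleting one point from a long line should not interfere with the connectivity structure of the rest of $M$. Concretely, let $L$ be a segment of $M$ with $|L|\geq 4$ and let $\ell\in L$. Suppose for contradiction that $M\delete\ell$ is not $3$-connected, so $M\delete\ell$ has a $k$-separation $(A,B)$ with $k\leq 2$, i.e. $\lambda_{M\delete\ell}(A)\leq 1$ and $\min\{|A|,|B|\}\geq 2$. Since $M$ is $3$-connected (hence has no $1$-separations and no $2$-separations), $\ell$ must "repair" this separation: the point is to show that $(A\cup\{\ell\},B)$ or $(A,B\cup\{\ell\})$ is a $2$-separation of $M$ unless something forces $|L|\leq 3$.

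The key step is a local-connectivity computation. Write $L_A=L\cap A$ and $L_B=L\cap B$ (note $\ell\notin A\cup B$, so $L_A\cup L_B = L-\{\ell\}$ has at least three elements). Because $M|L$ is a rank-$2$ uniform matroid, we have $\sqcap_M(L_A,L_B)=r(L_A)+r(L_B)-r(L_A\cup L_B)$, and this equals $1$ whenever both $L_A$ and $L_B$ are nonempty (each has rank $2$ if it has $\geq 2$ elements, rank $1$ if it is a single point, and their union has rank $2$). Since at least three points of $L$ lie in $A\cup B$, at least one of $L_A,L_B$ has two or more elements; the resulting interaction between the parts, combined with $\lambda_{M\delete\ell}(A)\leq 1$, will force the separation of $M\delete\ell$ to be "parallel" to the line $L$. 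First I would handle the case where one of the sides, say $B$, satisfies $L\subseteq A\cup\{\ell\}$, i.e. $L_B=\emptyset$: then $\ell\in\cl_M(L-\{\ell\})\subseteq\cl_M(A)$, so adding $\ell$ back to $A$ does not increase $r(A)$, giving $\lambda_M(A\cup\{\ell\})=\lambda_{M\delete\ell}(A)\leq 1$; since $|A\cup\{\ell\}|\geq 3$ and $|B|\geq 2$, this is a $2$-separation of $M$, a contradiction. Symmetrically if $L_A=\emptyset$. So we may assume both $L_A$ and $L_B$ are nonempty, and by relabelling $|L_A|\geq 2$.

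In the remaining case I would argue that $\ell$ can be moved to whichever side keeps the line together. Since $|L_A|\geq 2$, $\cl_M(L_A)=\cl_M(L)\ni\ell$, so $\ell\in\cl_M(A)$ and hence $\lambda_M(A\cup\{\ell\})=\lambda_{M\delete\ell}(A)=\lambda_M(A)\le 1$ (the first equality because adding an element in the closure of $A$ does not change $r(A)$, and deleting $\ell$ does not change $r(B)$ as $\ell\notin B$). With $|A\cup\{\ell\}|\geq|L_A|+1\geq 3$ and $|B|\geq 2$, we again get a $2$-separation of $M$, contradicting $3$-connectivity. This covers all cases, so $M\delete\ell$ is $3$-connected. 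The main obstacle — and the only place any care is needed — is the bookkeeping that guarantees $\min\{|A\cup\{\ell\}|,|B|\}\geq 2$ (equivalently, that we have not merely exhibited a $1$-separation or a degenerate partition); this is exactly where the hypothesis $|L|\geq 4$ is used, since with $|L|=3$ one could have $|L_A|=|L_B|=1$ plus $\ell$, and the uniform line $U_{2,3}=M(K_4)\!\setminus\! e$-type obstructions show the statement genuinely fails for $3$-point lines (e.g. $M\cong U_{2,3}\oplus\cdots$ style small cases), so the proof must and does break down there.
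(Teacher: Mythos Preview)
The paper does not prove this lemma; it simply states that it is elementary. Your argument is the standard one and is correct in outline: since $|L-\{\ell\}|\ge 3$, any $2$-separation $(A,B)$ of $M\delete\ell$ has at least two points of $L$ on one side, so $\ell$ lies in the closure of that side, and reinserting $\ell$ there yields a $2$-separation of $M$.

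Two small points to clean up. First, your sentence ``a $k$-separation $(A,B)$ with $k\le 2$, i.e.\ $\lambda_{M\delete\ell}(A)\le 1$ and $\min\{|A|,|B|\}\ge 2$'' silently assumes $M\delete\ell$ is connected; you should say why. Since $|L|\ge 4$, the element $\ell$ is not a coloop of $M$, and because $M$ is cosimple there are no coloops in $M\delete\ell$ and hence no $1$-separations with a singleton side. Second, in the displayed chain $\lambda_M(A\cup\{\ell\})=\lambda_{M\delete\ell}(A)=\lambda_M(A)$, the first equality is exactly what you want (and is correct because $\ell\in\cl_M(A)$), but the second equality is false when $|L_B|=1$; just drop it. Finally, the closing parenthetical about ``$U_{2,3}=M(K_4)\setminus e$-type obstructions'' and ``$M\cong U_{2,3}\oplus\cdots$'' is garbled (a direct sum is not $3$-connected, and $M(K_4)\setminus e$ has five elements); the clean witness that $|L|\ge 4$ is needed is $M\cong M(K_4)$ with $L$ a triangle, where $M\delete\ell$ has a $2$-separation.
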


Recall the definition of a $\Theta$-separator given in the introduction. The next four lemmas are extracted from~\cite{dru21}. The first lemma follows from the proofs of~\cite[Lemma~13]{dru21} and~\cite[Lemma~16]{dru21}, while the second lemma is an immediate consequence of the first. The third lemma is~\cite[Theorem~1]{dru21}. The fourth lemma follows by combining~\cite[Lemma~18]{dru21} and~\cite[Lemma~19]{dru21}.

\begin{lemma}
Let $M$ be a $3$-connected matroid, and let $S$ be a $\Theta$-separator of $M$ such that $M|S$ is isomorphic to either $\Theta_n$ or $\Theta^-_n$, where $n\ge 3$. Let $W$ and $Z$ be the set of segment and cosegment elements of $M|S$, respectively. If $w\in W$, then $\si(M/w)$ is not $3$-connected. Furthermore, if  $z\in Z$, then $\co(M\delete z)$ is not $3$-connected, unless there is no element $x\in \cl(W)$ such that $(Z-\{z\})\cup \{x\}$ is a circuit.
\label{exception}
\end{lemma}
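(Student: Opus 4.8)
The plan is to work inside the explicit structure of $\Theta_n$ and $\Theta_n^-$. First I would fix notation: let $S$ be the $\Theta$-separator with $M|S\cong\Theta_n$ or $\Theta_n^-$, let $W$ be the segment elements and $Z$ the cosegment elements, so $r_M(W)=2$ and $r^*_M(Z)=2$ (equivalently $Z$ is a cosegment). Since $S$ is $3$-separating in $M$ and $M$ is $3$-connected with $r(M),r^*(M)\ge 4$, we have $\lambda_M(S)=2$, and I would record the local connectivity data: $\sqcap(W,Z\cup(E-S))$ and the complementary coconnectivity statement that $Z$ is fully ``exposed'' on the coside. The key structural point, read off from the circuits in (i)--(iii) of the definition of $\Theta_n$, is that each $w\in W$ together with two other segment elements forms a triangle, so $W$ is a segment of size at least three, and dually each $z\in Z$ lies in a triad of $M|S$ (the cocircuit $(W-\{w_i\})^*$-type set), so $Z$ is a cosegment of size at least three.

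For the first assertion, take $w\in W$. Since $|W|\ge 3$, the set $W$ is a segment of $M$ with at least three elements, hence $W-\{w\}$ still has rank $2$ and $w\in\cl_M(W-\{w\})$. Combined with $S$ being exactly $3$-separating and $w\notin\cl^*_M(S-\{w\})$ (orthogonality, Lemma~\ref{orthogonality}, using that $w$ is not in any triad meeting $E-S$), I would build a vertical $3$-separation $(X,\{w\},Y)$ of $M$: put the other segment elements on one side, everything else on the other, check $r(X),r(Y)\ge 3$ using $r(M)\ge 4$, and check $w\in\cl(X)\cap\cl(Y)$. By Lemma~\ref{vertical1} this gives that $\si(M/w)$ is not $3$-connected. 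The point is that contracting a segment element collapses the whole segment to a point without affecting the rank on the complementary side, so a vertical $3$-separation must persist.

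For the second assertion, take $z\in Z$. Dually, $Z$ is a cosegment of size at least three, so $z\in\cl^*_M(Z-\{z\})$, and the natural move is to produce a cyclic $3$-separation $(X,\{z\},Y)$ of $M$ and invoke the dual half of Lemma~\ref{vertical1}. The obstruction — and this is the main obstacle — is exactly the parenthetical exception: for the cyclic $3$-separation to exist we need $z$ to lie in a circuit on each side, and the side containing the rest of $Z$ supplies a circuit only if some circuit of the form $(Z-\{z\})\cup\{x\}$, with $x\in\cl(W)$, is present. The circuits of type (ii) give exactly $(Z-\{z_i\})\cup\{w_i\}$, and type (iii) gives $(Z-\{z_i\})\cup\{w_j,w_k\}$; when $M|S$ is all of $\Theta_n$ or $\Theta_n^-$ with $w_i$ still present these circuits exist, but in the degenerate situation where the relevant segment elements have been deleted (so that no element $x\in\cl(W)$ completes $Z-\{z\}$ to a circuit), the coclosure argument breaks and $\co(M\delete z)$ can remain $3$-connected. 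So the proof splits: if such an $x$ exists, I would take $Y$ to contain $(Z-\{z\})\cup\{x\}$ and $X$ to contain a circuit of $M|S$ inside $W$ (a triangle of segment elements, which exists since $|W|\ge 3$), verify $\lambda_M(X)=2$ by uncrossing the $3$-separating sets $S$ and the natural bipartition of $S$, confirm $z\in\cl^*(X)\cap\cl^*(Y)$, and conclude via Lemma~\ref{vertical1} dualised; otherwise we are in the stated exceptional case and there is nothing to prove. I expect the bookkeeping in verifying the rank/corank lower bounds on the two sides (which is where the hypotheses $r(M),r^*(M)\ge 4$ are used) to be the only genuinely fiddly part, and that it is routine given the explicit circuit list.
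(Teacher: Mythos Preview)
Your argument for the first assertion contains a genuine gap. You propose to ``put the other segment elements on one side, everything else on the other'' and then check $r(X),r(Y)\ge 3$. But $W$ is a segment, so whichever side equals $W-\{w\}$ has rank exactly $2$, and the vertical condition $\min\{r(X),r(Y)\}\ge 3$ fails outright; the hypothesis $r(M)\ge 4$ does not rescue this. Relatedly, your claim $|W|\ge 3$ is false when $M|S\cong\Theta_3^-$, where $|W|=2$. The point you are missing is that the type-(ii) circuit $(Z-\{z_i\})\cup\{w_i\}$ places $w_i$ in $\cl(Z)$, not merely in $\cl(W-\{w_i\})$; since $r(Z)=n\ge 3$, the cosegment side is the one that carries the rank. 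One then checks that $w_i\notin\cl^*(Z)$ (else $\lambda(Z\cup\{w_i\})=1$, contradicting $3$-connectivity), so $w_i\in\cl(E-Z-\{w_i\})$ as well, and $(Z,\{w_i\},E-Z-\{w_i\})$ is the vertical $3$-separation you want, at least when $r(M)>n$; when $r(M)=n$ one uses $(Z-\{z_i\},\{w_i\},\ldots)$ instead. The paper does not give a self-contained proof here---it defers to the proofs of two lemmas in~\cite{dru21}---but this is the mechanism behind those arguments.

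Your approach to the second assertion is essentially right: the cyclic $3$-separation $((Z-\{z\})\cup\{x\},\{z\},E-Z-\{x\})$ does the job, and indeed this is exactly the separation exhibited later in the paper in the proof of Lemma~\ref{theta reveal}. Your justification that the complementary side contains a circuit (``a triangle of segment elements, which exists since $|W|\ge 3$'') is again hostage to the false inequality $|W|\ge 3$, but this is inessential: a short rank computation using $r^*(M)\ge 4$ shows $r(E-Z-\{x\})<|E-Z-\{x\}|$ regardless, so that side is dependent without any appeal to triangles in $W$.
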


\begin{lemma}
\label{theta elastic}
Let $M$ be a $3$-connected matroid and let $S$ be a $\Theta$-separator of $M$. If $M|S\cong \Theta_n$ for some $n\geq 3$, then no element of $S$ is elastic. If $M|S\cong \Theta_n^-$ for some $n\geq 3$, then $S$ contains a unique elastic element unless there exists an element $e\in E(M)-S$ such that $M|(S\cup\{e\})\cong \Theta_n$.
\end{lemma}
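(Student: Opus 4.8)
The plan is to decide element by element which members of $S$ can be elastic, combining Lemma~\ref{exception} with the explicit circuits of $\Theta_n$ and $\Theta_n^-$ listed in the introduction. Write $W=\{w_1,\dots,w_n\}$ and $Z=\{z_1,\dots,z_n\}$ for the segment and cosegment elements of $\Theta_n$, and note that $Z$ is independent in $\Theta_n$, so $r(\Theta_n)=n$. Since $\Theta_n\delete w_i\cong\Theta_n\delete w_j$ for all $i,j$, whenever $M|S\cong\Theta_n^-$ we may fix the labelling so that $M|S=\Theta_n\delete w_n$, with segment $W_0=\{w_1,\dots,w_{n-1}\}$ and cosegment $Z$. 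Suppose first that $M|S\cong\Theta_n$. For each $w\in W$, Lemma~\ref{exception} gives that $\si(M/w)$ is not $3$-connected, so $w$ is not elastic; and for each $z_i\in Z$, the set $(Z-\{z_i\})\cup\{w_i\}$ is a circuit of $M$ by circuit type~(ii) and $w_i\in\cl(W)$, so Lemma~\ref{exception} gives that $\co(M\delete z_i)$ is not $3$-connected and $z_i$ is not elastic. Hence no element of $S$ is elastic.

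Now suppose that $M|S=\Theta_n\delete w_n$. As above, no $w\in W_0$ is elastic, and for $i\in\{1,\dots,n-1\}$ the circuit $(Z-\{z_i\})\cup\{w_i\}$ survives deletion of $w_n$, so $\co(M\delete z_i)$ is not $3$-connected and $z_i$ is not elastic. Thus the only element of $S$ that can possibly be elastic is $z_n$, and it suffices to show that $z_n$ is elastic precisely when there is no $e\in E(M)-S$ with $M|(S\cup\{e\})\cong\Theta_n$. One direction is quick: if such an $e$ exists, then $W_0\cup\{e\}$ is a rank-$2$ set (as $e\in\cl_M(W_0)$), so $S\cup\{e\}$ is a $\Theta$-separator of $M$ with $M|(S\cup\{e\})\cong\Theta_n$, and the case just handled, applied to $S\cup\{e\}$, shows $S$ contains no elastic element.

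For the converse, the first step is the structural claim that $\co(M\delete z_n)$ fails to be $3$-connected if and only if such an $e$ exists. The reverse implication again follows from Lemma~\ref{exception}: in $M|(S\cup\{e\})\cong\Theta_n$ the element $e$ plays the role of $w_n$, so $(Z-\{z_n\})\cup\{e\}$ is a circuit of $M$ and $e\in\cl(W_0\cup\{e\})=\cl(W_0)$. For the forward implication, Lemma~\ref{exception} supplies some $x\in\cl(W_0)$ with $(Z-\{z_n\})\cup\{x\}$ a circuit of $M$; inspecting the circuit list of $\Theta_n^-$ shows that $\cl(W_0)\cap S=W_0$ and that no set of the form $(Z-\{z_n\})\cup\{w_j\}$ with $j\le n-1$ is a circuit, which forces $x\in E(M)-S$. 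A routine verification of the circuit axioms then identifies $M|(S\cup\{x\})$ with $\Theta_n$, with $x$ as the restored segment element, using that $W_0\cup\{x\}$ is a rank-$2$ line, that the surviving circuits of types~(ii) and~(iii) together with $(Z-\{z_n\})\cup\{x\}$ supply all such circuits avoiding $x$, that circuit elimination against the triangles in $W_0\cup\{x\}$ supplies those meeting $x$, and that $r_M(S\cup\{x\})=r_M(S)=n=r(\Theta_n)$. In particular, when no such $e$ exists, $\co(M\delete z_n)$ is $3$-connected.

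The remaining step, which I expect to be the main obstacle, is to show that, when no such $e$ exists, $\si(M/z_n)$ is also $3$-connected; note that Lemma~\ref{exception} gives no leverage here, as it speaks only to $\co(M\delete z)$ for $z\in Z$. I would argue by contradiction: if $\si(M/z_n)$ is not $3$-connected, then by Lemma~\ref{vertical1} there is a vertical $3$-separation $(A,\{z_n\},B)$ of $M$, which by Lemma~\ref{vertical2} may be taken with $B\cup\{z_n\}$ closed. Since $\lambda_M(S)\le2$, uncrossing $(A,\{z_n\},B)$ against $(S,E(M)-S)$ via Lemma~\ref{uncrossing} sharply restricts how the rank-$2$ set $W_0$ and the corank-$2$ set $Z$ can be split between $A$ and $B$; a short case analysis of the surviving configurations should then exhibit an element of $E(M)-S$ on the line $\cl(W_0)$ completing a circuit with $Z-\{z_n\}$, that is, the excluded element $e$, a contradiction. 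For $n=3$, where $\Theta_3^-$ is a $5$-element fan, this is consistent with Lemma~\ref{elastic fans}; for general $n$ the step is essentially the content of the proof of \cite[Lemma~16]{dru21} and may be quoted rather than reproved. Combining everything, $z_n$ is then the unique elastic element of $S$, as claimed.
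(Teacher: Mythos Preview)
The paper does not prove this lemma directly; it merely states that it is an immediate consequence of Lemma~\ref{exception}, which is itself extracted without proof from the proofs of \cite[Lemmas~13 and~16]{dru21}. Your route is therefore the same as the paper's --- deduce everything from Lemma~\ref{exception} --- and your treatment of the $\Theta_n$ case and of every element of $S$ other than $z_n$ in the $\Theta_n^-$ case is exactly what is intended. You also correctly read the ``unless'' clause of Lemma~\ref{exception} as a biconditional, which is what the paper is relying on when it calls the deduction immediate.

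Where you take a longer route than necessary is in arguing that $\si(M/z_n)$ is $3$-connected. For $n\ge 4$ no uncrossing or case analysis is needed: by definition of a $\Theta$-separator, $Z$ is a corank-$2$ set in the $3$-connected matroid $M$, hence a cosegment with at least four elements, and the dual of Lemma~\ref{segdelete} gives that $M/z_n$ is $3$-connected outright. Only the case $n=3$, where $S$ is a $5$-element fan, genuinely needs the content of Lemma~\ref{elastic fans} (which is \cite[Lemma~13]{dru21}), and there your reference is apt. One further remark: calling the identification $M|(S\cup\{x\})\cong\Theta_n$ a ``routine verification'' is a bit optimistic --- it amounts to checking that the single-element extension of $\Theta_n^-$ determined by a point in $\cl(W_0)\cap\cl(Z-\{z_n\})$ is unique and equal to $\Theta_n$, which is true but deserves a line of justification.
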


\begin{lemma}
\label{elastic1} 
Let $M$ be a $3$-connected matroid with a vertical $3$-separation $(X, \{e\}, Y)$ such that $Y\cup \{e\}$ is maximal. If $X\cup \{e\}$ is not a $4$-element fan and $X$ is not contained in a $\Theta$-separator, then at least two elements of $X$ are elastic.
\end{lemma}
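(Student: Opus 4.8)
The plan is to deduce this statement from Theorem~\ref{elastic2} via a minor-minimal reduction argument, using the maximality of $Y\cup\{e\}$ to control what the reduced matroid looks like. Since $(X,\{e\},Y)$ is a vertical $3$-separation, Lemma~\ref{vertical1} tells us $\si(M/e)$ is not $3$-connected, so $e$ is not elastic; the goal is to show that, barring the two exceptions, at least two elements of $X$ are elastic. First I would invoke Lemma~\ref{vertical2} to replace $Y\cup\{e\}$ by $\cl(Y)$, i.e. assume without loss of generality that $Y\cup\{e\}=\cl(Y\cup\{e\})$ is closed and that $X=X-\cl(Y)$; this does not disturb maximality and ensures $X$ is ``tight'' on the separation. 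Then, since $Y\cup\{e\}$ is a maximal vertical $3$-separator, no single element of $X$ can be moved across the separation, which by Lemma~\ref{3sep2} means no element $x$ of $X$ lies in $\cl(X-\{x\})\cap\cl(Y\cup\{e\})$ or in $\cl^*(X-\{x\})\cap\cl^*(Y\cup\{e\})$ with the wrong parity — this rigidity is what forces the exceptional structures when $X$ is small.

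The heart of the argument is to handle the case $|X|\ge 2$ by relating elasticity in $M$ to elasticity in a $3$-connected minor obtained from $M$ by contracting a cocircuit's worth of elements from $Y$ (or dually). Concretely, I would find a minor $M_1$ of $M$ with $X\subseteq E(M_1)$, such that $M_1$ is $3$-connected, $\lambda_{M_1}(X)=2$ still, and such that elastic elements of $M_1$ lying in $X$ pull back to elastic elements of $M$. The standard device here: repeatedly contract elements of $Y$ that lie in parallel classes or series classes relative to $X\cup\{e\}$, or appeal to the fact (implicit in \cite{dru21}) that one may shrink $Y$ down to a small ``core'' while keeping $3$-connectivity and keeping the local connectivity across $X$ equal to $2$; because $e\in\cl(X)\cap\cl(Y)$, the element $e$ can be kept as the ``hinge.'' Once $Y$ has been reduced to bounded size, $M_1$ has no $4$-element fan and no $\Theta$-separator unless one was already present and confined to $X$ (or $X\cup\{e\}$), and then Theorem~\ref{elastic2} yields at least four elastic elements of $M_1$; at most two of these can fail to lie in $X$ (they'd lie in the small core together with $e$), giving at least two elastic elements of $M_1$ in $X$, hence in $M$.

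The main obstacle — and where the real work lies — is the minor-reduction step and its interaction with the two exceptions. One must argue that if $X\cup\{e\}$ is \emph{not} a $4$-element fan and $X$ is \emph{not} contained in a $\Theta$-separator of $M$, then the same holds in the reduced matroid $M_1$, so that Theorem~\ref{elastic2} applies cleanly; conversely, if a $4$-element fan or a $\Theta$-separator does appear in $M_1$ and obstructs elastic elements, one must trace it back through the contractions to show it already lived inside $X\cup\{e\}$ (this is precisely where the hypotheses bite, and where Lemma~\ref{elastic fans} and Lemma~\ref{theta elastic} get used to count elastic elements in the exceptional pieces). A secondary subtlety is the base cases $|X|=1$ and $|X|=2$: here $X$ is too small for Theorem~\ref{elastic2} directly, and one shows by the maximality/rigidity observations above — via Lemma~\ref{3sep1}, Lemma~\ref{3sep2}, and Lemma~\ref{orthogonality} — that $X\cup\{e\}$ must be a $4$-element fan or $X$ sits inside a $\Theta$-separator, so the exceptions are exactly the unavoidable small configurations. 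I expect the bulk of the proof to be the careful verification that the reduction preserves ``no $4$-element fan / no $\Theta$-separator outside $X\cup\{e\}$,'' since this requires analysing how fans and segments/cosegments can grow or shrink under contraction in $Y$.
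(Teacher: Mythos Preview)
Your proposal has a fundamental circularity. In this paper Lemma~\ref{elastic1} is not proved at all; it is quoted as \cite[Theorem~1]{dru21}, and the introduction states explicitly that Theorem~\ref{elastic2} is a \emph{consequence} of it. Thus the logical dependency is Lemma~\ref{elastic1} $\Rightarrow$ Theorem~\ref{elastic2}, and deducing the lemma from Theorem~\ref{elastic2}, as you propose, is circular unless you can supply an independent proof of Theorem~\ref{elastic2}---but the only proof of Theorem~\ref{elastic2} on record goes through precisely the result you are trying to establish.

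Even setting circularity aside, the reduction strategy has genuine gaps. First, a vertical $3$-separation $(X,\{e\},Y)$ has $r(X)\ge 3$, hence $|X|\ge 3$ automatically; your ``base cases'' $|X|\in\{1,2\}$ are vacuous, and the actual small case is $|X|=3$, which is exactly where the $4$-element-fan exception sits. Second, the key claim that elastic elements of $M_1$ lying in $X$ pull back to elastic elements of $M$ is unsupported: contracting or deleting elements of $Y$ can create new triangles and triads meeting $X$ and can change which $2$-separations appear in $\si(M/x)$ or $\co(M\backslash x)$, so elasticity in the minor does not transfer to $M$ without substantial further argument. Third, arranging that the reduced matroid $M_1$ has no $4$-element fans and no $\Theta$-separators---the hypotheses you need to invoke Theorem~\ref{elastic2}---is itself as delicate as the lemma you are proving; minors readily manufacture both kinds of obstruction. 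The proof in \cite{dru21} is instead a direct structural analysis of $X$ under the maximality hypothesis, distinguishing the cases where $X$ is a rank-$3$ cocircuit or $X\cup\{e\}$ is a circuit (cf.\ Lemma~\ref{lem: thetamax}) and locating elastic elements explicitly in each case.
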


\begin{lemma}
Let $M$ be a $3$-connected matroid with a vertical $3$-separation $(X, \{e\}, Y)$ such that $Y\cup \{e\}$ is maximal. If $X$ is contained in a $\Theta$-separator $S$, then either
\begin{enumerate}[{\rm (i)}]
\item $X$ is a rank-$3$ cocircuit, $M^*|S$ is isomorphic to either $\Theta_n$ or $\Theta^-_n$, where $n=|X\cup \{e\}|-1$, and there is a unique element $x\in X$ such that $x$ is a segment element of $M^*|S$ and $(X-\{x\})\cup \{e\}$ is the set of cosegment elements of $M^*|S$, or

\item $X\cup \{e\}$ is a circuit, $M|S$ is isomorphic to either $\Theta_n$ or $\Theta^-_n$ for some $n\in \{|X|, |X|+1\}$, and $X$ is a subset of the cosegment elements of $M|S$.
\end{enumerate}
\label{lem: thetamax}
\end{lemma}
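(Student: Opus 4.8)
Lemma~\ref{lem: thetamax}: if $(X,\{e\},Y)$ is a vertical $3$-separation of a $3$-connected matroid $M$ with $Y\cup\{e\}$ maximal, and $X$ is contained in a $\Theta$-separator $S$, then either (i) $X$ is a rank-$3$ cocircuit with the stated structure in $M^*|S$, or (ii) $X\cup\{e\}$ is a circuit with the stated structure in $M|S$.

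The plan is to locate $X$ inside the $\Theta$-separator $S$ by combining the exact $3$-separation $(X,\{e\},Y)$ of $M$ with the restricted matroid $M|S$ (respectively $M^*|S$), and then to read off which of the two conclusions holds according to whether it is $M|S$ or $M^*|S$ that is a $\Theta_n$-type matroid.

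First I would record the structural consequences of the hypotheses. Since $M$ is $3$-connected and $(X,\{e\},Y)$ is a vertical $3$-separation, each of $X$, $X\cup\{e\}$ and $Y$ is exactly $3$-separating, $r(X),r(Y)\ge 3$ and $|X|,|Y|\ge 3$, and $r(X)=r(X\cup\{e\})$, $r(Y)=r(Y\cup\{e\})$ because $e\in\cl(X)\cap\cl(Y)$. By definition of a $\Theta$-separator (using that $W$ has rank $2$, $Z$ has corank $2$, and $r(M),r^*(M)\ge 4$), $S$ is exactly $3$-separating; hence, as $|X|\ge 3$ and $X\subseteq S\cap(X\cup\{e\})$, uncrossing $S$ with $X\cup\{e\}$ via Lemma~\ref{uncrossing}(i) shows $S\cup\{e\}$ is $3$-separating, so $e\in\cl(S)\cup\cl^*(S)$ by Lemma~\ref{3sep1}; since $e\in\cl(X)\subseteq\cl(S)$ we in fact have $e\in\cl(S)$. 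Next, the maximality of $Y\cup\{e\}$ together with Lemma~\ref{vertical2} forces $\cl(Y)=Y\cup\{e\}$, so $Y\cup\{e\}$ is a flat of $M$ with $X\cap\cl(Y)=\emptyset$; since $S-X\subseteq\cl(Y)$, it follows that $\cl_{M|S}(S-X)=S-X$, i.e. $S-X$ is a flat of $M|S$ (equivalently, $X$ is a union of cocircuits of $M|S$), and a short submodularity computation gives $\lambda_{M|S}(X)\le\lambda_M(X)=2$, so $X$ is $3$-separating in $M|S$ as well.

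Now I would split on the orientation of $S$. Suppose first $M|S\cong\Theta_n$ or $\Theta_n^-$, with segment elements $W$ and cosegment elements $Z$; here I claim (ii) holds. Since $r_M(X)=r_{M|S}(X)\ge 3>2=r(W)$, $X$ is not contained in $\cl(W)$, so $X$ meets $Z$. Using the explicit circuit and flat structure of $\Theta_n$ and $\Theta_n^-$ — in particular that $(Z-\{z_i\})\cup\{w_i\}$ is a circuit, so $Z-\{z_i\}$ is not a flat — together with the facts that $S-X$ is a flat of $M|S$, that $r_{M|S}(X)\ge 3$, and crucially that $X$ is exactly $3$-separating in the \emph{ambient} matroid $M$ (which, through how $S$ attaches to $E-S$, rules out the larger co-restrictions of $S$ that are $3$-separating only in $M|S$), one is left with $S-X\in\{\cl(W),\cl(W\cup\{z_i\})\}$; that is, $X=Z$ or $X=Z-\{z_i\}$, and in either case $X\subseteq Z$ and $X$ is independent in $M$. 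A further use of maximality rules out any proper subset of $X$ spanning $e$, so $X\cup\{e\}$ is a circuit of $M$, and counting cosegment elements gives $n\in\{|X|,|X|+1\}$. This is precisely (ii).

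In the remaining case $M^*|S\cong\Theta_n$ or $\Theta_n^-$ I aim for (i); here $Z$ is the set of segment elements and $W$ the set of cosegment elements of $M^*|S$, and every cocircuit of $M$ contained in $S$ is a circuit of $M^*|S$. Running the same bookkeeping for $M^*|S$ — now tracking cocircuits of $M$ inside $S$ — the flat condition on $S-X$ together with the connectivity constraints $\lambda_{M|S}(X)\le 2$ and $r_{M|S}(X)\ge 3$ forces $X$ to be, in $M$, a single cocircuit, necessarily one of the circuits of $M^*|S$; the constraint that $X$ have $M$-rank $3$ and, equivalently, that $|X|=n$ (via $\lambda_{M|S}(X)\le 2$) then eliminates the $(n+1)$-element circuits of type (iii), leaving a circuit of the form $(W-\{w_i\})\cup\{z_i\}$. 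Writing $x=z_i$ and observing $e=w_i\in S$, we get that $X$ is a rank-$3$ cocircuit of $M$ with $(X-\{x\})\cup\{e\}=W$, the set of cosegment elements of $M^*|S$, and $n=|X\cup\{e\}|-1$; this is precisely (i). (When $n=3$ both orientations can occur, as $\Theta_3\cong M(K_4)$ is self-dual, and then either analysis delivers the conclusion.) The main obstacle is the casework in these last two paragraphs: one must combine the internal structure of $\Theta_n$ and $\Theta_n^-$ — its circuits, cocircuits, and flats — with the connectivity of $X$ \emph{in $M$} rather than merely in $M|S$, since it is precisely the ambient $3$-separation (equivalently, the smallness of $\sqcap_M(X,E-S)$) that excludes the spurious candidates for $X$ that are $3$-separating only within the restriction, and one must keep careful track in each branch of whether $e$ lies inside or outside $S$.
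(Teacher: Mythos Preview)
The paper does not prove this lemma; it simply records that it ``follows by combining~[Lemma~18]{dru21} and~[Lemma~19]{dru21}.'' So there is no in-paper proof to compare against, and your sketch is standing on its own.

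Your overall plan---use maximality to get $Y\cup\{e\}$ closed, deduce that $S-X$ is a flat of $M|S$, and then classify $X$ inside the explicit matroid $\Theta_n$ or $\Theta_n^-$---is a sensible line of attack for conclusion~(ii), and the preliminary observations (closure of $Y\cup\{e\}$, $\lambda_{M|S}(X)\le 2$, $r_{M|S}(X)\ge 3$) are correctly derived. That said, even in this branch the two key claims are only asserted: that the flat condition on $S-X$ together with the ambient connectivity forces $S-X\in\{\cl(W),\cl(W\cup\{z_i\})\}$, and that maximality then upgrades ``$X$ independent with $e\in\cl(X)$'' to ``$X\cup\{e\}$ is a circuit.'' Both need the actual casework on the flats and circuits of $\Theta_n$ and $\Theta_n^-$ written out, and the second needs you to exhibit the competing vertical $3$-separation that maximality is supposed to kill.

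The more serious gap is in the branch aiming at conclusion~(i). There the $\Theta$-structure lives in $M^*|S$, but your one structural tool---``$S-X$ is a flat of $M|S$''---lives in $M|S$, and $M|S$ and $M^*|S$ are genuinely different matroids (one is $(M^*/(E-S))^*$, the other is $(M/(E-S))^*$). You cannot simply transport the flat condition across. Nor do you have the obvious dual inputs: $(X,\{e\},Y)$ is vertical in $M$, hence cyclic in $M^*$, so you do not know $r_{M^*}(X)\ge 3$, and $Y\cup\{e\}$ is closed but not, in general, coclosed. Consequently the sentence ``the flat condition on $S-X$ together with the connectivity constraints $\ldots$ forces $X$ to be a single cocircuit of the form $(W-\{w_i\})\cup\{z_i\}$'' is not yet an argument, and the assertion ``observing $e=w_i\in S$'' begs exactly the question of why $e$ must lie in $S$ at all in this case. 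To make this branch go through you will need different leverage---for instance, working with the rank-$3$ cocircuit structure of $X$ in $M$ directly, or with $\lambda_{M^*|S}(X)\le 2$ and the explicit cocircuits of $\Theta_n$---rather than recycling the $M|S$-flat observation.
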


\subsection*{Separations and minors}

The following two lemmas concern $3$-connected minors across $2$-separations. The first is elementary, and the second is a slight strengthening of \cite[Lemma 4.5]{bre14} and follows from the proof of that lemma.

\begin{lemma}
\label{2sep}
Let $(X, Y)$ be a $2$-separation of a connected matroid $M$, and let $N$ be a $3$-connected minor of $M$. Then $\{X, Y\}$ has a member $U$ such that $|U\cap E(N)|\leq 1$. Moreover, if $u\in U$, then
\begin{enumerate}[{\rm (i)}]
\item $M/u$ has an $N$-minor if $M/u$ is connected, and
\item $M\delete u$ has an $N$-minor if $M\delete u$ is connected.
\end{enumerate}
\end{lemma}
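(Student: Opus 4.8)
For the first assertion I would argue by uncrossing, and for the minor statements by passing to the $2$-sum decomposition of $M$ along $(X,Y)$.

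For the first assertion, use that connectivity is monotone under minors. Suppose for a contradiction that $|X\cap E(N)|\ge 2$ and $|Y\cap E(N)|\ge 2$. Then $(X\cap E(N),\,Y\cap E(N))$ is a partition of $E(N)$ into two parts each of size at least $2$ with $\lambda_N(X\cap E(N))\le\lambda_M(X)\le 1$, so $N$ has a $1$- or $2$-separation, contradicting that it is $3$-connected. Hence $\{X,Y\}$ has a member $U$ with $|U\cap E(N)|\le 1$.

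For the minor statements, assume without loss of generality that $U=X$ and let $u\in X$. As $M$ is connected, $(X,Y)$ is an exact $2$-separation, so $M=M_X\oplus_2 M_Y$ with basepoint $p\notin E(M)$, where $M_X$ has ground set $X\cup\{p\}$ and $M_Y$ has ground set $Y\cup\{p\}$, both $M_X$ and $M_Y$ are connected, and $p$ is neither a loop nor a coloop of either. The first step is to show that $N$ is isomorphic to a minor of $M_Y$. Assuming $|E(N)|\ge 3$ (the smaller cases being immediate), the standard classification of minors of a $2$-sum gives that $N$ is isomorphic to a minor of one of $M_X\delete p$, $M_X/p$, $M_Y\delete p$, $M_Y/p$, or else $N=M_X'\oplus_2 M_Y'$ with $M_X'$ and $M_Y'$ minors of $M_X$ and $M_Y$ respectively, each containing $p$. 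The first two options would force $E(N)\subseteq X$, hence $|E(N)|\le 1$; in the last option, $3$-connectivity of $N$ forces one part, say $M_X'$, to be a two-element matroid $U_{1,2}$ on $\{p,z\}$, whence $N\cong M_Y'$. So in every surviving case $N$ is isomorphic to a minor of $M_Y$.

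Now suppose $M/u$ is connected. Since $p$ is never a coloop of $M_X/u$, and is a loop of $M_X/u$ exactly when $u$ is parallel to $p$ in $M_X$ — in which case $M/u$ is the direct sum $(M_X/u\delete p)\oplus(M_Y/p)$, hence disconnected — connectivity of $M/u$ forces $p$ to remain non-loop and non-coloop in $M_X/u$, so $M/u=(M_X/u)\oplus_2 M_Y$ with $M_X/u$ connected. Any connected matroid on at least two elements with a prescribed non-loop, non-coloop element $p$ reduces, by contracting and deleting elements other than $p$, to a $U_{1,2}$ on $\{z,p\}$ for some $z$; hence $M/u$ has $U_{1,2}|_{\{z,p\}}\oplus_2 M_Y\cong M_Y$ as a minor, and therefore has an $N$-minor. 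Part (ii) is dual: $M\delete u$ connected forces $p$ to remain non-loop and non-coloop in $M_X\delete u$, giving $M\delete u=(M_X\delete u)\oplus_2 M_Y$ with $M_X\delete u$ connected, and the same reduction yields an $N$-minor. I expect the main obstacle to be the basepoint bookkeeping here — verifying that contracting (respectively deleting) $u$ degenerates the $2$-sum precisely when $M/u$ (respectively $M\delete u$) fails to be connected, and extracting the isomorphism ``$N$ is a minor of $M_Y$'' cleanly from the $2$-sum minor classification; the remaining steps are standard.
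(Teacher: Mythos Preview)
The paper does not supply a proof of this lemma; it simply declares it ``elementary'' and moves on. So there is nothing to compare against, and your task is just to produce a correct argument, which you have.

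Your first paragraph is exactly the standard one-line proof: monotonicity of the connectivity function under minors forces one side of the $2$-separation to meet $E(N)$ in at most one element. For the second part, the $2$-sum decomposition is the natural vehicle, and your outline is correct. Two small points worth tightening. First, in the minor classification step you should note that the case $|E(N)\cap X|=1$ genuinely uses an isomorphism (the single element of $E(N)\cap X$ gets relabelled to the basepoint $p$), so ``$N$-minor'' here must mean ``minor isomorphic to $N$''; this is indeed the paper's convention, as is clear from Lemma~\ref{BS lem 4.5+}, where $M/x$ is asserted to have an $N$-minor even when $x$ may lie in $E(N)$. Second, the step you flag---that the $2$-sum structure survives the reduction of $M_X/u$ down to $U_{1,2}$---is indeed the only place needing care. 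One clean way through: pick a circuit $C$ of $M_X/u$ containing $p$ (possible since $M_X/u$ is connected with $p$ not a coloop), contract $C-\{z,p\}$ for some $z\in C-p$, and then delete everything else; at every stage $\{z,p\}$ remains a circuit, so $p$ is never a loop or coloop and the $2$-sum with $M_Y$ never degenerates. With that, $M/u$ has $U_{1,2}|_{\{z,p\}}\oplus_2 M_Y\cong M_Y$ as a minor, and hence an $N$-minor. The deletion case is dual.
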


\begin{lemma}
Let $M$ be a $3$-connected matroid, and let $N$ be a $3$-connected minor of $M$. Let $(X,\{e\},Y)$ be a vertical $3$-separation of $M$ such that $M/e$ has an $N$-minor, where $|X\cap E(N)|\leq 1$ and $Y\cup \{e\}$ is closed. Then $M/x$ has an $N$-minor for every element $x$ of $X$, and there is at most 
one element of $X$, say $x'$, such that $M\delete x'$ has no $N$-minor. Moreover, if such an element $x'$ exists, then $x'\in \cl^*(Y)$ and $e\in \cl(X-\{x'\})$.
\label{BS lem 4.5+}
\end{lemma}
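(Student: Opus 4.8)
The plan is to follow the structure of the proof of \cite[Lemma~4.5]{bre14}, strengthening the bookkeeping to track exactly which deletions lose the $N$-minor. First I would record the basic consequences of the hypotheses: since $(X,\{e\},Y)$ is a vertical $3$-separation with $Y\cup\{e\}$ closed, Lemma~\ref{orthogonality} (via $e\in\cl(X)$) and Lemma~\ref{3sep1} give that no element of $X$ lies in $\cl^*(Y)$ except possibly through the interaction we are trying to isolate; more importantly, $(X-\{x\},\{x\})$-type moves inside $X$ are governed by Lemma~\ref{3sep2}. The key structural input is that $\lambda_M(X\cup\{e\})=2$ and $r(X\cup\{e\})\ge 3$, so contracting $e$ produces a matroid in which $(X,Y)$ is a $2$-separation (after possibly dealing with the rank drop), and $M/e$ has an $N$-minor with $|X\cap E(N)|\le 1$. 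Applying Lemma~\ref{2sep} to this $2$-separation of $M/e$, with $U=X$, yields that $(M/e)/x = M/\{e,x\}$ has an $N$-minor whenever it is connected, and $(M/e)\delete x = (M\delete x)/e$ has an $N$-minor whenever it is connected.

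For the first assertion — that $M/x$ has an $N$-minor for every $x\in X$ — I would argue that $M/\{e,x\}$ is connected for all $x\in X$. This should follow because $e\in\cl(X)$ forces $X-\{x\}$ to still span $e$ in most cases, so no element of $X-\{x\}$ becomes a coloop of $M/\{e,x\}$, and $Y$ remains connected on its side since $Y\cup\{e\}$ is closed and $(X,\{e\},Y)$ vertical means $r(Y)\ge 3$; a short uncrossing argument using Lemma~\ref{uncrossing} handles the possibility of a new $1$- or $2$-separation. Then $M/\{e,x\}$ having an $N$-minor forces $M/x$ to have one. For the second assertion, the same reasoning applied to $(M\delete x)/e$ shows it has an $N$-minor unless it is disconnected; disconnectedness of $(M\delete x)/e$ means $x$ is a coloop of $M/e$ restricted to the relevant side, which by Lemma~\ref{orthogonality} translates to $x\in\cl^*(Y\cup\{e\})=\cl^*(Y)$ (using that $Y\cup\{e\}$ is closed and $e\in\cl(X)$, so $e\notin\cl^*(Y)$). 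Crucially, there can be at most one such $x'$: if two elements $x_1',x_2'$ of $X$ both lay in $\cl^*(Y)$, then $\{x_1',x_2'\}\subseteq\cl^*(Y)$ would make $Y\cup\{x_1',x_2'\}$ a $2$-separating set after contracting $e$, contradicting maximality/$3$-connectivity; here I would invoke the vertical $3$-separation machinery and Lemma~\ref{vertical2} to derive the contradiction. Finally, when such an $x'$ exists, the condition $e\in\cl(X-\{x'\})$ comes out of Lemma~\ref{3sep2}: moving $x'$ to the $Y$-side keeps exact $3$-separation precisely when $x'\in\cl^*(X-\{x'\})\cap\cl^*(Y)$, and since $x'\in\cl^*(Y)$ we must have $x'\in\cl^*(X-\{x'\})$, which by Lemma~\ref{orthogonality} (applied within $X\cup\{e\}$) is equivalent to $e\notin\cl^*(X-\{x'\})$, i.e. $e\in\cl(X-\{x'\})$ after accounting for the closure of $Y\cup\{e\}$.

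The main obstacle I anticipate is the connectivity bookkeeping when passing to $M/e$: contracting $e$ can drop the rank and can, a priori, create parallel elements or small separations that obscure the $2$-separation $(X,Y)$, so one must verify carefully that $(X,Y)$ is genuinely a $2$-separation of $M/e$ (not a $1$-separation, and with both sides large enough) before Lemma~\ref{2sep} applies, and then that the further single-element contraction/deletion inside $X$ preserves connectivity except in the one identified exceptional case. This is exactly the point where \cite{bre14} does the work, so the honest version of this proof is to cite the proof of \cite[Lemma~4.5]{bre14} for the connectivity verifications and supply only the extra observation that the unique bad element $x'$, when it exists, must satisfy $x'\in\cl^*(Y)$ and $e\in\cl(X-\{x'\})$, both of which are immediate from Lemmas~\ref{orthogonality} and~\ref{3sep2} once the $2$-separation picture is in place.
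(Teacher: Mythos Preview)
Your proposal is correct and coincides with the paper's treatment: the paper does not give an independent proof of this lemma but states that it ``is a slight strengthening of \cite[Lemma~4.5]{bre14} and follows from the proof of that lemma,'' which is precisely your plan of citing that proof for the connectivity work and extracting the two extra conclusions about $x'$. Your derivation of $e\in\cl(X-\{x'\})$ from $x'\in\cl^*(Y)$ is the right idea (it is just the rank computation $r((X-\{x'\})\cup\{e\})=r(X\cup\{e\})-1$ combined with $e\in\cl(X)$), though the route via Lemma~\ref{3sep2} is more convoluted than necessary.
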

	
We end the preliminaries by considering equivalent conditions for when a $\Theta$-separator reveals a $3$-connected minor.

\begin{lemma}
\label{theta reveal}
Let $M$ be a $3$-connected matroid such that $r(M),r^*(M)\geq 4$, and let $N$ be a $3$-connected minor of $M$. Let $S\subseteq E(M)$ such that $M|S$ is isomorphic to either $\Theta_n$ or $\Theta^-_n$, where $n\ge 3$. Suppose that $W$ and $Z$ are the sets of segment and cosegment elements of $M|S$, respectively. Then the following statements are equivalent:
\begin{enumerate}[{\rm (i)}]
\item At least one element of $Z$ is $N$-revealing in $M$.

\item The cosimplification $\co(M\delete z)$ has an $N$-minor for at least two elements $z\in Z$.

\item  Both $\si(M/z)$ and $\co(M\delete z)$ have an $N$-minor for all $z\in Z$ and $\co(M\delete w)$ has an $N$-minor for all $w\in W$.
\end{enumerate}
Moreover, if $|E(N)|\leq 3$, then {\rm(i)--(iii)} always hold.
\end{lemma}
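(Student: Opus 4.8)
The plan is to prove the cyclic chain of implications (i) $\Rightarrow$ (iii) $\Rightarrow$ (ii) $\Rightarrow$ (i), together with the final claim that (i)--(iii) hold automatically when $|E(N)| \le 3$. Throughout I would work with the structure of $\Theta_n$ and $\Theta_n^-$ recorded in the introduction: $W$ is a segment (rank-$2$, so $|W| \le 3$ forces nothing, but $W$ spans a line) and $Z$ is a cosegment (corank-$2$); moreover the circuits crossing $W$ and $Z$ are exactly those of types (ii) and (iii) in the definition of $\Theta_n$. The key connectivity input is Lemma~\ref{exception}: every $w \in W$ has $\si(M/w)$ not $3$-connected, and every $z \in Z$ has $\co(M\delete z)$ not $3$-connected \emph{unless} no element $x \in \cl(W)$ makes $(Z-\{z\})\cup\{x\}$ a circuit. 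Since inside $M|S$ itself $(Z-\{z_i\})\cup\{w_i\}$ is a circuit for every $i$, that exceptional clause can only be triggered for at most one $z \in Z$ (namely when $|W| = n$... actually one checks it is never triggered when $|W| \ge 2$ because distinct $w_i$ give distinct circuits through distinct $Z-\{z\}$), so I would first record that $\co(M\delete z)$ fails to be $3$-connected for \emph{all but at most one} $z \in Z$, and dually-type reasoning pins down $\si(M/z)$.

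For (i) $\Rightarrow$ (iii): suppose some $z_0 \in Z$ is $N$-revealing. By Lemma~\ref{exception}, $\co(M\delete z_0)$ is not $3$-connected and $\si(M/z_0)$ is not $3$-connected (the latter because a cosegment element cannot be a segment element, so the exceptional clause of Lemma~\ref{exception} applied to $M^*$ does not rescue it); hence being $N$-revealing means one of these two non-$3$-connected simplifications/cosimplifications has an $N$-minor. I would then propagate the $N$-minor around the $\Theta$-structure using Lemma~\ref{BS lem 4.5+} (and Lemma~\ref{2sep}) applied to the natural vertical/cyclic $3$-separation that $S$ induces: $Z \cup \{$one extra element$\}$ or $W \cup Z$ gives a vertical $3$-separation $(X', \{e'\}, Y')$ with $S$ on one side, and once one element of $S$ carries an $N$-minor down, Lemma~\ref{BS lem 4.5+} forces $M/x$ to have an $N$-minor for \emph{every} $x$ on that side and $M \delete x$ for all but at most one. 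Combining this with Lemma~\ref{exception} (which tells us $\si(M/z)$, $\co(M\delete z)$ are already non-$3$-connected, hence their being $N$-minor-carrying is exactly what ``$N$-revealing'' demands and also what (iii) demands), and with Lemma~\ref{segdelete} to handle the segment $W$ (deleting a segment element keeps $3$-connectivity, and one checks $M\delete w$ has an $N$-minor by a connectivity-plus-Lemma~\ref{2sep} argument), I would deduce all of (iii). The implication (iii) $\Rightarrow$ (ii) is immediate since $|Z| = n \ge 3 > 2$. For (ii) $\Rightarrow$ (i): if $\co(M\delete z)$ has an $N$-minor for two distinct $z \in Z$, then since the exceptional clause of Lemma~\ref{exception} can hold for at most one element of $Z$, there is some $z$ with $\co(M\delete z)$ having an $N$-minor \emph{and} not $3$-connected, which is precisely the definition of $z$ being $N$-revealing, giving (i).

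For the moreover clause, suppose $|E(N)| \le 3$. The point is that a $\Theta$-separator $S$ has $|S| = |W| + |Z| \ge |Z| = n \ge 3$ together with $r(M), r^*(M) \ge 4$, so $M$ is large enough that every minor of the form $\si(M/z)$ or $\co(M\delete z)$ still has ground set of size at least... well, the cleanest route is: since $N$ is $3$-connected with at most $3$ elements, $N$ is $U_{1,m}$ or $U_{m-1,m}$ with $m \le 3$ (up to the trivial cases), and such an $N$ is a minor of essentially every $3$-connected matroid of rank and corank at least $2$; more precisely, $\si(M/z)$ and $\co(M\delete z)$ are matroids on ground sets that, because $S$ contributes a long line or colline, still retain enough rank and corank to contain any $N$ with $|E(N)| \le 3$. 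I would verify this by a short direct argument: pick $z \in Z$; then $M|S$ restricted appropriately still shows $M$ has both a triangle and a triad (segment elements give triangles, cosegment elements give triads via $\Theta_n^-$ being a fan when $n=3$ and longer otherwise), and any $3$-connected $N$ with $\le 3$ elements is a minor of any matroid containing a triangle or a triad of the right type and with $r, r^* \ge 2$. Hence (iii) holds vacuously-by-minor-containment, so all three statements hold.

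The main obstacle I anticipate is the bookkeeping in (i) $\Rightarrow$ (iii): correctly identifying which vertical (or cyclic) $3$-separation of $M$ the set $S$ induces, verifying its maximality or closedness hypotheses so that Lemma~\ref{BS lem 4.5+} applies, and tracking the ``at most one exceptional element'' on both the $W$ side and the $Z$ side simultaneously so that the conclusion ``$N$-minor for \emph{all}'' genuinely follows rather than ``all but one''. The interplay between Lemma~\ref{exception}'s exceptional clause and Lemma~\ref{BS lem 4.5+}'s single exceptional element is delicate — I would need to check these two potential exceptions coincide (or that one precludes the other) to get the clean ``for all $z \in Z$'' and ``for all $w \in W$'' statements in (iii). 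The duality bookkeeping (passing between $\Theta_n$ in $M$ and segment/cosegment roles) is the other place where sign errors could creep in, but it is routine given Lemmas~\ref{orthogonality} and~\ref{vertical1}.
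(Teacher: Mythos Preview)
Your argument for (ii) $\Rightarrow$ (i) via Lemma~\ref{exception}'s ``at most one exceptional $z$'' is correct and essentially matches the paper's explicit cyclic $3$-separation route. The implication (iii) $\Rightarrow$ (ii) is immediate in both.

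However, your treatment of (i) $\Rightarrow$ (iii) contains a genuine error. You assert that $\si(M/z_0)$ is not $3$-connected, justifying this by ``the exceptional clause of Lemma~\ref{exception} applied to $M^*$''. This dualisation is invalid: the hypothesis of Lemma~\ref{exception} is that $M|S$ (not $M^*|S$) is isomorphic to $\Theta_n$ or $\Theta_n^-$, and nothing in the statement guarantees $M^*|S$ has this form. In fact the truth is the \emph{opposite} of what you claim: for each $z_i$ with $i\le k$, Lemma~\ref{exception} (or the explicit cyclic $3$-separation $((Z-\{z_i\})\cup\{w_i\},\{z_i\},E(M)-(Z\cup\{w_i\}))$) shows $\co(M\delete z_i)$ is not $3$-connected, and then Bixby's Lemma forces $\si(M/z_i)$ to \emph{be} $3$-connected. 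Consequently, if such a $z_i$ is $N$-revealing, it is necessarily because $\co(M\delete z_i)$ has an $N$-minor, not because of $\si(M/z_i)$.

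The paper exploits exactly this: starting from $\co(M\delete z_i)$ having an $N$-minor, it observes that $Z-\{z_i\}$ is a series class of $M\delete z_i$, so $(Z-\{z_i\})\cup\{w_i\}$ is $2$-separating there and meets $E(N)$ in at most one element. The dual of Lemma~\ref{BS lem 4.5+} then gives $N$-minors in $M\delete w_i$ and in both $M\delete z_j$ and $M/z_j$ for every $j\ne i$. The crucial step you are missing --- and which resolves the ``aligning two exceptions'' worry you flag --- is a bootstrap: this first pass shows every $z_j$ with $j\le k$ is $N$-revealing, so the same argument may be re-run with $i$ replaced by any such $j$, filling in the ``for all $z\in Z$'' and ``for all $w\in W$'' conclusions with no residual exception. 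Your plan to locate a single vertical $3$-separation with $S$ on one side and apply Lemma~\ref{BS lem 4.5+} once would leave you stuck at ``all but one''.

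For the moreover clause, your sketch (``any $3$-connected matroid with enough rank and corank contains $N$'') is the right idea for $\si(M/z)$ and $\co(M\delete w)$, since those are $3$-connected. But $\co(M\delete z)$ is \emph{not} $3$-connected, so you cannot invoke that heuristic directly; the paper instead checks by hand that $\co(M\delete z)$ is connected with rank and corank at least two, using an orthogonality argument to bound how many triads can contain $z$.
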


\begin{proof}
Since $M|S$ is isomorphic to either $\Theta_n$ or $\Theta_n^-$, there is a labelling of the elements $w_1, w_2, \ldots, w_k$ of $W$ and $z_1, z_2, \ldots, z_n$ of $Z$ such that $(Z-\{z_i\})\cup \{w_i\}$ is a circuit of $M$ for all $i\in \{1, 2, \ldots, k\}$, where $k\in \{n, n-1\}$. Now (iii) certainly implies (ii). Assume that (ii) holds, and let $i\in \{1, 2, \ldots, k\}$. It is straightforward to observe that, as $r(M), r^*(M)\geq 4$, the partition
$$((Z-\{z_i\})\cup \{w_i\}, \{z_i\}, E(M)-(Z\cup \{w_i\}))$$
of $E(M)$ is a cyclic $3$-separation of $M$. Thus, by Lemma~\ref{vertical1}, $\co(M\delete z_i)$ is not $3$-connected, and so (ii) implies (i).

We next show that (i) implies (iii) when $|E(N)|\geq 4$. Let $i\in\{1, 2, \ldots, k\}$, and suppose that $\co(M\delete z_i)$ has an $N$-minor. Since $N$ is simple and $Z-\{z_i\}$ is a series class of $M\delete z_i$, and so $(Z-\{z_i\})\cup \{w_i\}$ is $2$-separating in $M\delete z_i$, it follows that $|(Z\cup\{w_i\})\cap E(N)|\leq 1$. Therefore, by the dual of Lemma~\ref{BS lem 4.5+}, $M\delete w_i$ has an $N$-minor, and both $M\delete z_j$ and $M/z_j$ have an $N$-minor for all $j\in \{1, 2, \ldots, n\}-\{i\}$. In particular, $z_j$ is $N$-revealing for all $j\in \{1, 2, \ldots, k\}$. Thus, the initial choice of $i\in \{1, 2, \ldots, k\}$ was arbitrary. Hence both $M/z$ and $M\delete z$ have an $N$-minor for all $z\in Z$, and $M\delete w$ has an $N$-minor for all $w\in W$. Thus (iii) holds, and so (i)--(iii) are equivalent if $|E(N)|\ge 4$.

We complete the proof by showing, without assuming (i), that if $|E(N)|\le 3$, then (iii) holds and so, by above, (ii) and (i) also hold. To do this, it suffices to show that, for all $z \in Z$ and $w\in W$, each of $\co(M\delete z)$, $\si(M/z)$, and $\co(M\delete w)$ has a $U_{1, 3}$- and $U_{2, 3}$-minor. By Lemma~\ref{exception} and Bixby's Lemma, $\si(M/z)$ and $\co(M\delete w)$ are both $3$-connected. Furthermore, $\si(M/z)$ has rank at least three and $\co(M\delete w)$ has corank at least three. It now follows that $\si(M/z)$ and $\co(M\delete w)$ each have $U_{1, 3}$ and $U_{2, 3}$ as minors. To show that $\co(M\delete z)$ has a $U_{1, 3}$- and a $U_{2, 3}$-minor, it suffices to show that, as $\co(M\delete z)$ is connected, it has rank and corank at least two. Since $r^*(M)\geq 4$, $\co(M\delete z)$ has corank at least three. As $z$ is in at least one circuit of the form $(Z-\{z_i\})\cup \{w_i\}$ in $M$, we have by orthogonality with these circuits that any triad containing $z$ must either be contained in $Z$ or contain an element of $W$. It follows that $z$ is in at most one triad of $M$ with an element outside of $Z$. In particular, $\co(M\delete z)$ has rank at least $r(M)-(n-2)=r(M\delete Z)$ when $n\geq 4$ and rank at least $r(M)-2$ when $n=3$. Thus $\co(M\delete z)$ has rank at least two, completing the proof of the lemma.
\end{proof}

A consequence of the last lemma is that every $\Theta$-separator reveals each of the matroids $U_{0, 0}$, $U_{0, 1}$, $U_{1, 1}$, $U_{1, 2}$, $U_{1, 3}$, and $U_{2, 3}$. We freely use this fact throughout the remainder of the paper.

\section{Proofs of Theorems~\ref{thm: main}, \ref{thm: maximal}, and~\ref{fans1}}
\label{sec: splitter}

In this section we prove Theorems~\ref{thm: main}--\ref{fans1} . We begin with three lemmas. The first two lemmas concern elastic elements in matroids with rank and corank at least four. 

\begin{lemma}
\label{smallN1}
Let $M$ be a $3$-connected matroid such that $r(M), r^*(M)\ge 4$, and let $N$ be a $3$-connected minor of $M$ with at most three elements. Then every elastic element of $M$ is $N$-elastic.
\end{lemma}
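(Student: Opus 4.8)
The goal is to show that, under the hypothesis $r(M), r^*(M)\ge 4$, an elastic element of $M$ is automatically $N$-elastic whenever $|E(N)|\le 3$. Since $e$ is elastic, both $\si(M/e)$ and $\co(M\delete e)$ are $3$-connected, so the only thing to verify is that each of these two matroids has an $N$-minor. By duality it suffices to handle $\si(M/e)$, the argument for $\co(M\delete e)$ being identical in $M^*$.

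The plan is to reduce the problem to the fact, noted just before Lemma~\ref{smallN1} in the discussion following Lemma~\ref{theta reveal}, that a sufficiently rich $3$-connected matroid contains every matroid on at most three elements as a minor; equivalently, it is enough to show that $\si(M/e)$ has both a $U_{1,3}$-minor and a $U_{2,3}$-minor, since the $3$-connected minors on at most three elements are exactly $U_{0,0}, U_{0,1}, U_{1,1}, U_{1,2}, U_{1,3}$ and $U_{2,3}$, and $U_{1,3}, U_{2,3}$ dominate this list under the minor order. First I would observe that $\si(M/e)$ is $3$-connected (as $e$ is elastic) and compute its rank: $r(\si(M/e)) = r(M/e) = r(M)-1 \ge 3$. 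A $3$-connected matroid of rank at least three has at least three elements and, being simple and connected of rank $\ge 3$, contains both $U_{1,3}$ and $U_{2,3}$ as minors --- this is the same elementary observation used for $\si(M/z)$ and $\co(M\delete w)$ in the proof of Lemma~\ref{theta reveal}. Hence $\si(M/e)$ has an $N$-minor for every $N$ with $|E(N)|\le 3$. Dually, using $r^*(M)\ge 4$, the cosimplification $\co(M\delete e)$ is $3$-connected of corank at least three, so it too has every such $N$ as a minor.

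Putting the two halves together, $\si(M/e)$ and $\co(M\delete e)$ are both $3$-connected and both have $N$ as a minor, which is exactly the definition of $e$ being $N$-elastic.

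The only point requiring slight care --- and the place I would expect to write a sentence rather than leave it implicit --- is the claim that a simple, connected (indeed $3$-connected) matroid of rank at least three has both $U_{1,3}$ and $U_{2,3}$ as minors, and the dual statement for corank. For $U_{2,3}$: a simple matroid of rank $\ge 3$ has three points in general position (no two parallel, not all on a line), and contracting the complement of such a triangle and simplifying yields $U_{2,3}$; more directly, a rank-$\ge 3$ simple connected matroid is not a line, so it has a circuit of size $\ge 3$ and restricting to three of its elements gives $U_{2,3}$. For $U_{1,3}$: take any $3$-element independent set (possible since rank $\ge 3$) and contract a rank-$(r-2)$ subset of the complement disjoint from it; after simplification the three chosen elements become parallel, giving $U_{1,3}$. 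The corank statements follow by duality. This is entirely routine, so no genuine obstacle arises; the lemma is essentially a bookkeeping consequence of the richness observation already recorded after Lemma~\ref{theta reveal}.
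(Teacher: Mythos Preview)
Your overall strategy is exactly the paper's: since $e$ is elastic, $\si(M/e)$ and $\co(M\delete e)$ are $3$-connected, and using $r(M),r^*(M)\ge 4$ you note they have rank (respectively corank) at least three; you then argue that any such matroid has both $U_{1,3}$ and $U_{2,3}$ as minors, which covers every $3$-connected $N$ with $|E(N)|\le 3$. The paper's proof is the same outline, phrased as ``$\si(M/x)$ contains a circuit, but $\si(M/x)$ is not a circuit, and so $\si(M/x)$ has a $U_{2,3}$- and a $U_{1,3}$-minor,'' followed by the dual statement.

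Two of your explicit justifications need fixing, however. For $U_{2,3}$: restricting a circuit $C$ to three of its elements gives $U_{3,3}$ when $|C|>3$, not $U_{2,3}$; the correct move is to contract $|C|-3$ elements of $C$ and then restrict. For $U_{1,3}$: contracting a rank-$(r-2)$ set drops the rank to $2$, not $1$, so your three chosen independent elements become a $U_{2,3}$, not a $U_{1,3}$. To produce $U_{1,3}$ you must contract to rank $1$ while keeping three non-loop elements, and for that you need $\si(M/e)$ to have corank at least $2$. This is precisely the content of the paper's clause ``is not a circuit'' (ruling out $\si(M/e)\cong U_{k-1,k}$, which has corank $1$ and no $U_{1,3}$-minor); your write-up omits this step and would need to supply it to be complete.
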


\begin{proof}
Let $x$ be an elastic element of $M$. Then $\si(M/x)$ and $\co(M\delete x)$ are both $3$-connected. Furthermore, as $r(M), r^*(M)\ge 4$, we have that $\si(M/x)$ has rank at least three and $\co(M\delete x)$ has corank at least three. Thus, as $\si(M/x)$ is $3$-connected, $\si(M/x)$ contains a circuit, but $\si(M/x)$ is not a circuit, and so $\si(M/x)$ has a $U_{2, 3}$- and a $U_{1, 3}$-minor. By duality, $\co(M\delete x)$ also has a $U_{1, 3}$- and a $U_{2, 3}$-minor. As $|E(N)|\le 3$, it follows that $N$ is a minor of either $U_{1, 3}$ or $U_{2, 3}$, and the lemma follows.
\end{proof}

Theorem~\ref{fans1} is an immediate consequence of Lemma~\ref{elastic fans} and the next lemma.

\begin{lemma}
\label{lem: N-elastic}
Let $M$ be a $3$-connected matroid of corank at least four, and let $N$ be a $3$-connected minor of $M$. Let $(X,\{e\},Y)$ be a vertical $3$-separation of $M$ such that $M/e$ has an $N$-minor and $|X\cap E(N)|\leq 1$. If $Y\cup \{e\}$ is closed, then every elastic element in $X$ is $N$-elastic.
\end{lemma}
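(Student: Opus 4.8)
The plan is to deduce the lemma from Lemma~\ref{BS lem 4.5+} together with the connectivity lemmas of Section~\ref{sec: prelims}, the only genuine work being to rule out one bad element. First I would note that the corank hypothesis secretly carries a rank hypothesis: since $M$ is $3$-connected and $(X\cup\{e\},Y)$ is a $3$-separation, it is exactly $3$-separating, so $r(M)=r(X\cup\{e\})+r(Y)-2\ge 4$ because $r(X\cup\{e\}),r(Y)\ge 3$ by the definition of a vertical $3$-separation. Hence $r(M),r^*(M)\ge 4$, and if $|E(N)|\le 3$ then Lemma~\ref{smallN1} already gives the conclusion. So I would assume from now on that $|E(N)|\ge 4$; in particular $N$ is both simple and cosimple.

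Now let $x$ be an elastic element of $X$, so $\si(M/x)$ and $\co(M\delete x)$ are $3$-connected. By Lemma~\ref{BS lem 4.5+}, $M/x$ has an $N$-minor, and since $N$ is simple, so does $\si(M/x)$. The same lemma provides at most one element $x'\in X$ with $M\delete x'$ having no $N$-minor, and tells us that if $x'$ exists then $x'\in\cl^*(Y)$ and $e\in\cl(X-\{x'\})$. Since $N$ is cosimple, $\co(M\delete x)$ has an $N$-minor as soon as $M\delete x$ does; hence it suffices to prove that $x'$, if it exists, is not elastic, for then $x\ne x'$ and $x$ is $N$-elastic.

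So suppose $x'$ exists and, aiming for a contradiction, that $x'$ is elastic; write $P=X-\{x'\}$. Using $e\in\cl(P)$, the orthogonality relation of Lemma~\ref{orthogonality} (which turns $x'\in\cl^*(Y)$ into $x'\notin\cl(P\cup\{e\})$), and the fact that $Y\cup\{e\}$ is closed (so $x'\notin\cl(Y)$), a short rank count shows that $P$ and $P\cup\{e\}$ are each exactly $3$-separating, and then Lemma~\ref{3sep1} yields $x'\in\cl^*(P)\cap\cl^*(P\cup\{e\})$. The idea is now to exhibit a cyclic $3$-separation of $M$ with $x'$ in the middle -- equivalently, a vertical $3$-separation of $M^*$ -- using one of the two partitions $(P\cup\{e\},\{x'\},Y)$ or $(P,\{x'\},Y\cup\{e\})$; by Lemma~\ref{vertical1} this contradicts $x'$ being elastic. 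Checking the required corank bounds on the two sides shows that the first partition works when $Y$ is dependent and the second when $P$ is dependent (and $|X|\ge 4$). The hypothesis $r^*(M)\ge 4$ then settles the case distinction: if both $P$ and $Y$ were independent, then $|E(M)|=r(M)+2$, forcing $r^*(M)=2$, a contradiction; and when $Y$ is independent, this same bound forces $P$ to be dependent and hence $|X|\ge 4$.

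I expect the last paragraph to be the only delicate point: one must keep the cardinality conditions ($|P\cup\{e\}|,|Y|\ge 3$, and so on) and the corank conditions ($r^*\ge 3$ on both sides) holding simultaneously while splitting on whether $P$ and $Y$ are dependent, and it is precisely here that the hypothesis $r(M),r^*(M)\ge 4$ is used. Everything else is a direct application of results already in hand.
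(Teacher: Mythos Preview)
Your proof is correct and follows the same strategy as the paper's: reduce to $|E(N)|\ge 4$ via Lemma~\ref{smallN1}, apply Lemma~\ref{BS lem 4.5+}, and rule out the exceptional element $x'$ being elastic by contradicting the $3$-connectedness of $\co(M\delete x')$ using $r^*(M)\ge 4$. The only difference is cosmetic---the paper works directly with the $2$-separations $(Y\cup\{e\},X-\{x'\})$ and $(Y,(X-\{x'\})\cup\{e\})$ of $M\delete x'$ to force $r^*(M)=3$, whereas you phrase the same obstruction as a cyclic $3$-separation of $M$ with $x'$ in the guts---and there is one harmless arithmetic slip: when $P$ and $Y$ are both independent you actually get $|E(M)|=r(P)+r(Y)+2=r(M)+3$, so $r^*(M)=3$ rather than $2$, but this still contradicts $r^*(M)\ge 4$.
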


\begin{proof}
Let $x$ be an elastic element of $X$. If $|E(N)|\leq 3$, then, by Lemma~\ref{smallN1}, $x$ is $N$-elastic. Thus we may assume that $|E(N)|\ge 4$. In particular, $N$ is simple and cosimple, and so if $M/x$ or $M\delete x$ has an $N$-minor, then $\si(M/x)$ and $\co(M\delete x)$ has an $N$-minor, respectively. Therefore, by Lemma~\ref{BS lem 4.5+}, $x$ is $N$-elastic unless $x$ is the unique exception in the statement of Lemma~\ref{BS lem 4.5+}, in which case, $x\in \cl^*(Y)$ and $e\in \cl(X-\{x\})$. Suppose that $x$ is this unique exception. Then, as $x\in \cl^*(Y)$, it follows by Lemma~\ref{orthogonality}, that $x\not\in \cl(X-\{x\})$. Therefore, as $(Y\cup \{e\}, X)$ is a $3$-separation of $M$, we have
\begin{align*}
2 & = r(Y\cup \{e\})+r(X)-r(M) \\
& = r(Y\cup \{e\})+r(X-\{x\})+1-r(M).
\end{align*}
In particular,
$$1 = r(Y\cup \{e\})+r(X-\{x\})-r(M\delete x),$$
and so $(Y\cup \{e\}, X-\{x\})$ is a $2$-separation of $M\delete x$. Since $e\in \cl(X-\{x\})$, the partition $(Y, (X-\{x\})\cup \{e\})$ is also a $2$-separation of $M\delete x$. Now, as $x$ is elastic, $\co(M\delete x)$ is $3$-connected, and so at least one of $Y\cup \{e, x\}$ and $X$ has corank~$2$, and at least one of $Y\cup \{x\}$ and $X\cup \{e\}$ has corank~$2$. By Lemma~\ref{orthogonality}, $e\not\in \cl^*(X)\cup \cl^*(Y)$. Thus $r^*(X)=r^*(Y\cup \{x\})=2$. But then, as $M$ is $3$-connected, $r^*(M)=3$, contradicting the assumption that $M$ has corank at least four. Hence $x$ is not the exception, and the lemma holds.
\end{proof}

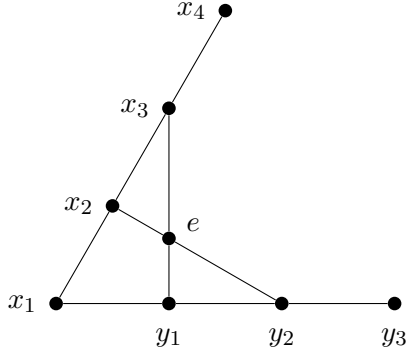
\begin{figure}[ht]
	\begin{tikzpicture}[scale=1.5]
	\begin{scope}[every node/.style=element]
	
	\node (x3) at (60:2) {};
	\node (x2) at (60:1) {};
	\node (x1) at (0,0) {};
	\node (y1) at (1,0) {};
	\node (y2) at (2,0) {};
	\node (y3) at (3,0) {};
	\node (x4) at (60:3) {};
	\node (e) at (intersection of  x2--y2 and y1--x3) {};
	\draw (x4) to (x3) to (x2) to (x1) to (y1) to (y2) to (y3);
	\draw (y2) to (e) to (x2);
	\draw (y1) to (e) to (x3);
	\end{scope}
	\begin{scope}
	\node[scale=1.0] at ($(x1)+(180:0.30)$) {$x_1$};
	\node[scale=1.0] at ($(x2)+(180:0.30)$) {$x_2$};
	\node[scale=1.0] at ($(x3)+(180:0.30)$) {$x_3$};
	\node[scale=1.0] at ($(x4)+(180:0.30)$) {$x_4$};
	\node[scale=1.0] at ($(e)+(30:0.25)$) {$e$};
	\node[scale=1.0] at ($(y1)+(-90:0.3)$) {$y_1$};
	\node[scale=1.0] at ($(y2)+(-90:0.3)$) {$y_2$};
	\node[scale=1.0] at ($(y3)+(-90:0.3)$) {$y_3$};
	\end{scope}
	\end{tikzpicture}
	\caption{The $3$-connected matroid $L_8$.}
	\label{fig: L8}
\end{figure}

The condition in the statement of Lemma~\ref{lem: N-elastic} that $M$ has corank at least four is necessary. To see this, let $L_8$ denote the $3$-connected rank-$3$ matroid shown in Fig.~\ref{fig: L8}. Let $X=\{x_1, x_2, x_3, x_4\}$ and $Y=\{y_1, y_2, y_3\}$. Then $(X, \{e\}, Y)$ is a cyclic $3$-separation of $L_8$, and $L_8\delete e$ has a $U_{2, 4}$-minor whose ground set contains $Y$. The element $x_1$ of $L_8$ is elastic but it is not $U_{2, 4}$-elastic. However, every element of $X-\{x_1\}$ is $U_{2,4}$-elastic. The next lemma captures this last observation and is the corank-three analogue of Lemma~\ref{lem: N-elastic}.

\begin{lemma}
\label{lem: corank3}
Let $M$ be a $3$-connected rank-$3$ matroid, and let $N$ be a $3$-connected minor of $M$. Let $(X,\{e\},Y)$ be a cyclic $3$-separation of $M$ such that $M\delete e$ has an $N$-minor and $|X\cap E(N)|\leq 1$. If $X\cup \{e\}$ is not a $4$-element fan, then there is at most one element of $X$ that is not $N$-elastic. Moreover, if such an element $x$ exists, then $x\in \cl(Y)$.
\end{lemma}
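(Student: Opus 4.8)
The plan is to mirror the proof of Lemma~\ref{lem: N-elastic} but work in the rank-$3$ setting, where the roles of rank and corank are swapped by duality and the corank-at-least-four hypothesis is replaced by the weaker rank-$3$ structure. First I would observe that if $|E(N)|\le 3$ then, by the dual of Lemma~\ref{smallN1} (equivalently, since $M^*$ has rank and corank at least four when $X\cup\{e\}$ is not a $4$-element fan — here one must be slightly careful, but a rank-$3$ $3$-connected matroid with a cyclic $3$-separation whose ``$X$'' side is not a $4$-fan has enough elements), every elastic element of $M$ is $N$-elastic, and the conclusion is immediate. So I would reduce to the case $|E(N)|\ge 4$, whence $N$ is simple and cosimple; then for any $x\in X$, $M/x$ has an $N$-minor iff $\si(M/x)$ does, and $M\delete x$ has an $N$-minor iff $\co(M\delete x)$ does.

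The main engine is Lemma~\ref{BS lem 4.5+} applied dually. Passing to $M^*$, the partition $(X,\{e\},Y)$ becomes a vertical $3$-separation of $M^*$ with $M^*/e$ having an $N^*$-minor and $|X\cap E(N^*)|\le 1$. To invoke Lemma~\ref{BS lem 4.5+} I need $Y\cup\{e\}$ closed in $M^*$, i.e.\ $Y\cup\{e\}$ coclosed in $M$; if it is not, I would first replace $(X,\{e\},Y)$ by the cyclic $3$-separation $(X-\cl^*(Y),\{e\},\cl^*(Y)-\{e\})$ supplied by Lemma~\ref{vertical2}, noting this only shrinks $X$, so proving the statement for the smaller $X$ suffices (the discarded elements lie in $\cl^*(Y)$, and I would need to check they are still $N$-elastic — this should follow since those elements are ``absorbed'' and the relevant conclusion is about non-$N$-elastic elements, of which there is at most one, landing inside the reduced $X$). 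Granting $Y\cup\{e\}$ coclosed, Lemma~\ref{BS lem 4.5+} applied to $M^*$ gives: $M^*/x$ has an $N^*$-minor for every $x\in X$ (so $M\delete x$ has an $N$-minor for every $x\in X$), and there is at most one $x'\in X$ with $M^*\delete x'$ having no $N^*$-minor (so $M/x'$ has no $N$-minor), and if $x'$ exists then $x'\in\cl^*_{M^*}(Y)=\cl_M(Y)$ and $e\in\cl_{M^*}(X-\{x'\})$.

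This already yields the two conclusions: every element of $X$ other than possibly $x'$ has both $M/x$ and $M\delete x$ with an $N$-minor, and to finish I must upgrade ``both minors exist'' to ``$N$-elastic'', i.e.\ show $\si(M/x)$ and $\co(M\delete x)$ are actually $3$-connected for such $x$, and that $x'\in\cl(Y)$. For the membership $x'\in\cl_M(Y)$: dually I have $e\in\cl_{M^*}(X-\{x'\})$, i.e.\ $e\notin\cl^*_M(X-\{x'\})$ — hmm, this is not literally $x'\in\cl(Y)$, so here I expect to need an uncrossing-type argument paralleling the one in Lemma~\ref{lem: N-elastic}: use that $\co(M\delete x')$ behaviour forces corank-$2$ pieces, and in rank $3$ derive the claimed closure statement or a contradiction. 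Indeed the cleanest route is to handle the whole ``upgrade to $N$-elastic'' step exactly as in Lemma~\ref{lem: N-elastic}, working in $M^*$: if some $x\in X$ with $x\ne x'$ failed to be $N$-elastic, then ($N$ being simple and cosimple) one of $\si(M/x)$, $\co(M\delete x)$ is not $3$-connected, so $M$ has a vertical or cyclic $3$-separation using $x$; combining it with $(X,\{e\},Y)$ by uncrossing (Lemma~\ref{uncrossing}), and using that $M$ has rank $3$ so that rank-$2$ sets are lines and cannot be too large without violating $3$-connectivity or the no-$4$-fan hypothesis, I force $x$ into $\cl(Y)$, which (being the unique such possibility) must then be $x'$ — contradiction.

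The hard part will be this last uncrossing step and correctly controlling the rank-$3$ geometry: unlike in Lemma~\ref{lem: N-elastic}, where corank $\ge 4$ gave an immediate numerical contradiction ($r^*(M)=3$), here I am \emph{in} rank $3$, so I cannot derive a contradiction from a small corank; instead the $4$-element-fan exclusion is exactly what rules out the bad configuration (a line of $X$ plus $e$ collapsing into a fan), and I must show any non-$N$-elastic $x\in X$ forces either such a fan or membership in $\cl(Y)$. I would also double-check the reduction to coclosed $Y\cup\{e\}$ does not interfere with the ``$X\cup\{e\}$ is not a $4$-element fan'' hypothesis — shrinking $X$ cannot create a $4$-fan on $X\cup\{e\}$ since a fan is determined by triangles/triads, but I should confirm the excluded case is genuinely preserved. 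Everything else — the two displayed conclusions — falls straight out of the dual of Lemma~\ref{BS lem 4.5+}.
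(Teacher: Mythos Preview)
Your approach is quite different from the paper's and, while the skeleton via the dual of Lemma~\ref{BS lem 4.5+} is sound for extracting the $N$-minor information, it leaves genuine gaps that would need the rank-$3$ geometry to fill anyway. The paper's proof is entirely elementary: since $r(M)=3$, both $X$ and $Y$ are rank-$2$ segments with $|X|,|Y|\ge 3$, $e$ lies on neither line, and $|\cl(Y)\cap X|\le 1$. From $|X\cap E(N)|\le 1$ and the $2$-separation $(X,Y)$ of $M\backslash e$, one sees immediately that $N$ is a minor of $U_{2,n}$ where $n=|\cl(Y)|$. The no-$4$-fan hypothesis on $X\cup\{e\}$ translates exactly to $|X-\cl(Y)|\ge 3$. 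Then for any $x\in X-\cl(Y)$ one checks directly that $\co(M\backslash x)=M\backslash x$ is $3$-connected, and that $\si(M/x)\cong U_{2,m}$ with $m\ge n$, so both are $3$-connected with $N$-minors. There is no dualising, no Lemma~\ref{BS lem 4.5+}, no case split on $|E(N)|$, and no uncrossing.

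Two concrete gaps in your route. First, your reduction of the case $|E(N)|\le 3$ via Lemma~\ref{smallN1} fails: that lemma requires both rank and corank at least four, and here $r(M)=3$; passing to $M^*$ does not help since then $r^*(M^*)=r(M)=3$. Second, even granting the dual of Lemma~\ref{BS lem 4.5+} (which, incidentally, already gives $x'\in\cl^*_{M^*}(Y)=\cl_M(Y)$ directly; your subsequent worry about $e\in\cl_{M^*}(X-\{x'\})$ is a red herring and contains a sign error, since $\cl_{M^*}=\cl^*_M$), you still have to show $\si(M/x)$ and $\co(M\backslash x)$ are $3$-connected for the remaining $x$. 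Your uncrossing sketch does not do this, and the natural way to finish is precisely to observe that $\si(M/x)$ is a rank-$2$ uniform matroid and that $M\backslash x$ remains $3$-connected because it still contains two lines and a point off both---which is exactly the paper's argument. Once you engage with that geometry, the detour through Lemma~\ref{BS lem 4.5+} buys nothing, since the constraint that $N$ be a $U_{2,n}$-minor handles the $N$-minor bookkeeping uniformly and for free.
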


\begin{proof}
Since $M$ is $3$-connected and $(X, \{e\}, Y)$ is a cyclic $3$-separation of $M$, we have $r(X)=r(Y)=2$, and $|X|, |Y|\ge 3$. Furthermore, $|\cl(Y)\cap X|\le 1$. Now, as $M\delete e$ has an $N$-minor and $|X\cap E(N)|\leq 1$, it follows that $N$ is a minor of $U_{2, n}$, where $n=|\cl(Y)|\le |Y|+1$. Let $x\in X-\cl(Y)$. As $X\cup\{e\}$ is not a $4$-element fan, $|X-\cl(Y)|\ge 3$, and so $\co(M\delete x)=M\delete x$ and $M\delete x$ is $3$-connected. Furthermore, $\si(M/x)$ is isomorphic to either $U_{2, n}$ or $U_{2, n+1}$ depending on whether or not $e$ is in a triangle with $x$. In particular, $\si(M/x)$ and $\co(M\delete x)$ are both $3$-connected with $N$-minors. This completes the proof of the lemma.
\end{proof}

We next prove Theorem~\ref{thm: maximal}.

\begin{proof}[Proof of Theorem~\ref{thm: maximal}]
Let $(X, \{e\}, Y)$ be a vertical $3$-separation of $M$ such that $M/e$ has an $N$-minor and $|X\cap E(N)|\le 1$. Without loss of generality, we may assume that $Y\cup\{e\}$ is closed. Now let $(X', \{e'\}, Y')$ be a vertical $3$-separation of $M$ such that $Y\cup \{e\}\subseteq Y'\cup \{e'\}$ and $Y'\cup \{e'\}$ is maximal, and suppose that $X'\cup \{e'\}$ is not a $4$-element fan. If $r^*(M)=3$, then, by Lemma~\ref{lem: corank3}, $X'$ contains at least two $N$-elastic elements. Thus, we may assume that $r^*(M)\ge 4$. Assume that $X'$ is contained in a $\Theta$-separator $S$. If $|E(N)|\leq 3$, then $S$ reveals $N$. Say $|E(N)|\geq 4$, in which case, $N$ is simple and cosimple. By Lemma~\ref{lem: thetamax}, either
\begin{enumerate}[(I)]
\item $M^*|S$ is isomorphic to either $\Theta_n$ or $\Theta^-_n$, where $n=|X'\cup \{e'\}|-1$, and there is a unique element $x\in X'$ such that $x$ is a segment element of $M^*|S$ and $(X'-\{x\})\cup \{e'\}$ is the set of cosegment elements of $M^*|S$, or

\item $X'\cup \{e'\}$ is a circuit, $M|S$ is isomorphic to either $\Theta_n$ or $\Theta^-_n$ for some $n\in \{|X'|, |X'|+1\}$, and $X'$ is a subset of the cosegment elements of $M|S$.
\end{enumerate}
If (I) holds, then, for all $x'\in X'-\{x\}$, it follows by Lemma~\ref{BS lem 4.5+} that $M/x'$, and hence $\si(M/x')$, has an $N$-minor. Thus Lemma~\ref{theta reveal}(ii) holds, and so $S$ reveals $N$. If (II) holds, then, by Lemma~\ref{BS lem 4.5+}, there are at least two elements $x'\in X'$ such that $M\delete x'$, and hence $\co(M\delete x')$, has an $N$-minor. Again, we deduce by Lemma~\ref{theta reveal} that $S$ reveals $N$. Thus we may assume that $X'$ is not contained in any $\Theta$-separator. Then, as $Y'\cup\{e'\}$ is maximal, it follows by Lemma~\ref{elastic1} that $X'$ contains at least two elastic elements. By Lemma~\ref{lem: N-elastic}, each of these elastic elements is $N$-elastic, thereby completing the proof of the theorem.\end{proof}

Lastly, we use Theorem~\ref{thm: maximal} to prove Theorem~\ref{thm: main}.

\begin{proof}[Proof of Theorem~\ref{thm: main}]
Let $e$ be an $N$-revealing element of $M$. Then, up to duality, $\si(M/e)$ has an $N$-minor and is not $3$-connected. It follows by Lemmas~\ref{vertical1} and \ref{2sep} that $M$ has a vertical $3$-separation $(X, \{e\}, Y)$ such that $|X\cap E(N)|\leq 1$. Choosing $(X-\cl(Y), \{e\}, \cl(Y)-\{e\})$ if necessary, $M$ has a vertical $3$-separation $(X', \{e'\}, Y')$ such that $Y\cup \{e\}\subseteq Y'\cup \{e'\}$ and $Y'\cup \{e'\}$ is maximal. Since $M$ has no $4$-element fans or $\Theta$-separators revealing $N$, it follows by Theorem~\ref{thm: maximal} that $X'$ contains at least two $N$-elastic elements, completing the proof of the theorem.
\end{proof}

\section{Matroids with the Smallest Number of Elastic Elements}
\label{sec: min elts}

In this section, we prove Theorems~\ref{min eltsI} and~\ref{min eltsII}. We begin with three lemmas. The first lemma follows easily from the definitions and the second is \cite[Lemma 6.3]{bre14}. A proof of the third lemma can be found in~\cite{whi99}.

\begin{lemma}
\label{BS lem 6.1}
Let $(X, Y)$ be a partition of the ground set of a matroid $M$ such that $|X|, |Y|\geq 2$. Then $(X, Y)$ is a sequential $3$-separation of $M$ if and only if for some $U\in \{X, Y\}$, there is a path of $3$-separations $(P_0, P_1, \ldots, P_k, U)$ in $M$ such that $|P_0|=2$ and $|P_i|=1$ for all $i\in\{1, 2, \ldots, k\}$.
\end{lemma}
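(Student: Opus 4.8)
The plan is to prove the biconditional by directly unwinding the two definitions, one implication at a time; the hypothesis $|X|,|Y|\ge 2$ is precisely what is needed to make the endpoints of the path legitimate and to control exactness.

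First I would handle the implication from a path of $3$-separations to a sequential $3$-separation. Suppose $(P_0,P_1,\dots,P_k,U)$ is such a path with $|P_0|=2$ and $|P_i|=1$ for all $i\ge 1$. Since the conclusion only asks for \emph{some} $U\in\{X,Y\}$, I may assume, after possibly swapping the roles of $X$ and $Y$, that $U=Y$, so that $X=P_0\cup P_1\cup\dots\cup P_k$. Listing the two elements of $P_0$ first (in either order), then the single elements of $P_1,\dots,P_k$ in turn, yields an ordering $(e_1,\dots,e_m)$ of $X$ with $m=k+2$. For each $i$ with $2\le i\le m$ the prefix $\{e_1,\dots,e_i\}$ equals $P_0\cup\dots\cup P_{i-2}$, which is exactly $3$-separating by the definition of a path of $3$-separations; and $\{e_1\}$ is trivially $3$-separating since $\lambda_M(\{e_1\})\le 1$. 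Taking $i=m$ also shows $X$ itself is exactly $3$-separating, so $(X,Y)$ is an exact $3$-separating partition, and the ordering just constructed witnesses that $(X,Y)$ is a sequential $3$-separation.

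For the converse, suppose $(X,Y)$ is a sequential $3$-separation: by definition $(X,Y)$ is exact $3$-separating and, after relabelling $X$ and $Y$ if necessary, there is an ordering $(e_1,\dots,e_m)$ of $X$ with $\{e_1,\dots,e_i\}$ $3$-separating for every $i$. I would set $U=Y$, $P_0=\{e_1,e_2\}$, and $P_i=\{e_{i+2}\}$ for $i\in\{1,\dots,m-2\}$, so that $k=m-2$; this is visibly an ordered partition of $E(M)$ of the required shape with $P_0\cup\dots\cup P_k=X$. It remains to verify that $P_0\cup\dots\cup P_i=\{e_1,\dots,e_{i+2}\}$ is \emph{exactly} $3$-separating for each $i\in\{0,\dots,k\}$. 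For $i=k$ this union is $X$, which is exact by hypothesis; for $i<k$, the set $\{e_1,\dots,e_{i+2}\}$ is $3$-separating, has at least two elements, and has complement containing $Y$ and hence at least two elements, so $\lambda_M$ of this set must equal $2$ — otherwise it would induce a $1$- or $2$-separation of $M$. Hence $(P_0,\dots,P_k,U)$ is a path of $3$-separations of the required form.

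The only step that is not pure bookkeeping is this last exactness check in the converse, namely upgrading ``$\lambda_M\le 2$'' to ``$\lambda_M=2$'' for the intermediate prefixes; this is exactly where grouping the first two elements into $P_0$ (so that no singleton prefix needs to be exactly $3$-separating) and the standing assumption that both sides of the separation have at least two elements (so that a $\lambda_M$-value at most $1$ is impossible in the ambient $3$-connected setting) are used. Everything else is a direct transcription between the two definitions, which is why the result ``follows easily''.
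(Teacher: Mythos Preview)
Your argument is correct and is precisely what the paper intends: the paper offers no proof beyond remarking that the lemma ``follows easily from the definitions'', and you have carried out exactly that unwinding in both directions. Your closing observation that the exactness upgrade in the converse direction relies on $M$ being $3$-connected is well taken---that hypothesis is implicit in the paper's ambient setting (and is genuinely needed, as the statement fails for general matroids), even though it does not appear explicitly in the lemma.
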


\begin{lemma}
\label{BS lem 6.3}
Let $M$ be a $3$-connected matroid with distinct elements $s_1$ and $s_2$. Let $U$ be a subset of $E(M)-\{s_1, s_2\}$ such that $|E(M)-(U\cup\{s_1, s_2\})|\geq 2$. If, for each $u\in U$, there is a path of $3$-separations $(X_u, \{u\}, Y_u)$ in $M$ such that $\{s_1, s_2\}\subseteq X_u\subseteq U\cup \{s_1, s_2\}$, then there is an ordering $(u_1, u_2, \ldots, u_k)$ of $U$ such that
$$(\{s_1, s_2\}, \{u_1\}, \{u_2\}, \ldots, \{u_k\}, E(M)-(U\cup\{s_1, s_2\}))$$
is a path of $3$-separations in $M$.
\end{lemma}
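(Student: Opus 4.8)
The plan is to establish, by induction on $|U|$, the following slight generalisation of the lemma: if $S$ is an exactly $3$-separating set of a $3$-connected matroid $M$ with $|S|\geq 2$, if $U\subseteq E(M)-S$ satisfies $|E(M)-(U\cup S)|\geq 2$, and if for each $u\in U$ there is a path of $3$-separations $(X_u,\{u\},Y_u)$ in $M$ with $S\subseteq X_u\subseteq U\cup S$, then $U$ has an ordering $(u_1,\ldots,u_k)$ for which $(S,\{u_1\},\ldots,\{u_k\},E(M)-(U\cup S))$ is a path of $3$-separations. Lemma~\ref{BS lem 6.3} is the case $S=\{s_1,s_2\}$: the stated hypotheses force $|E(M)|\geq 4$, and every two-element subset of a $3$-connected matroid on at least four elements is exactly $3$-separating. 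The base case $U=\emptyset$ of the induction is immediate since $S$ is exactly $3$-separating.

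For the inductive step, assume $U\neq\emptyset$ and consider the family $\mathcal{L}$ of all exactly $3$-separating sets $Z$ of $M$ with $S\subseteq Z\subseteq U\cup S$. It contains $S$ and contains $X_u$ and $X_u\cup\{u\}$ for every $u\in U$. Any two members of $\mathcal{L}$ meet in a set containing $S$, hence of size at least $2$, while the complement of their union and the complement of either of them contains $E(M)-(U\cup S)$, hence has size at least $2$; so Lemma~\ref{uncrossing}, together with $3$-connectivity (which upgrades a $3$-separating set to an exactly $3$-separating one as soon as both sides have size at least $2$), shows that $\mathcal{L}$ is closed under union and intersection. In particular $U\cup S\in\mathcal{L}$, so we may choose $Z^{*}\in\mathcal{L}$ of minimum size subject to $Z^{*}\supsetneq S$. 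If $w\in Z^{*}-S$ then $w\in U$, so $X_w\cup\{w\}\in\mathcal{L}$; the set $Z^{*}\cap(X_w\cup\{w\})$ lies in $\mathcal{L}$, contains $S\cup\{w\}$, and is contained in $Z^{*}$, so minimality of $Z^{*}$ forces it to equal $Z^{*}$. Hence $Z^{*}-\{w\}\subseteq X_w$, and therefore $Z^{*}\cap X_w=Z^{*}-\{w\}$. As $Z^{*}\cap X_w\in\mathcal{L}$ with $S\subseteq Z^{*}-\{w\}\subsetneq Z^{*}$, minimality of $Z^{*}$ now forces $Z^{*}-\{w\}=S$; that is, $|Z^{*}-S|=1$. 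Thus $Z^{*}=S\cup\{u_1\}$ for some $u_1\in U$, and $S\cup\{u_1\}$ is exactly $3$-separating.

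It remains to apply the induction hypothesis to $S':=S\cup\{u_1\}$ and $U':=U-\{u_1\}$. These satisfy $|S'|\geq 2$, with $S'$ exactly $3$-separating, $U'\subseteq E(M)-S'$, and $|E(M)-(U'\cup S')|=|E(M)-(U\cup S)|\geq 2$. For $u\in U'$ put $X_u':=X_u$ if $u_1\in X_u$, and $X_u':=X_u\cup\{u_1\}$ otherwise. Uncrossing $X_u$ and $X_u\cup\{u\}$ with $S'$ (their intersections with $S'$ contain $S$, and the relevant complements contain $E(M)-(U\cup S)$, so all have size at least $2$) shows that $X_u'$ and $X_u'\cup\{u\}$ are exactly $3$-separating; moreover $S'\subseteq X_u'\subseteq U'\cup S'$ and $u\notin X_u'$, so $(X_u',\{u\},Y_u')$ is a path of $3$-separations of the required form. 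The induction hypothesis then supplies an ordering $(u_2,\ldots,u_k)$ of $U'$ for which $(S',\{u_2\},\ldots,\{u_k\},E(M)-(U'\cup S'))$ is a path of $3$-separations. Since $S\cup\{u_1\}=S'$ and $E(M)-(U'\cup S')=E(M)-(U\cup S)$, prepending $\{u_1\}$ gives the path $(S,\{u_1\},\{u_2\},\ldots,\{u_k\},E(M)-(U\cup S))$, completing the induction.

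I expect the main obstacle to be the two uncrossing passages: verifying that $\mathcal{L}$ is genuinely closed under union and intersection, and that the modified sets $X_u'$ remain exactly (rather than merely at most) $3$-separating. Both reduce to careful bookkeeping with the hypothesis $|E(M)-(U\cup S)|\geq 2$, which is precisely what prevents a set obtained by uncrossing from collapsing to a $1$- or $2$-separating side. Once those are settled, the minimal-element argument isolating $u_1$ and the assembly of the final path are short.
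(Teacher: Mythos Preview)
Your proof is correct. Note that the paper does not itself prove this lemma; it is quoted as \cite[Lemma~6.3]{bre14}. Your self-contained argument by induction on $|U|$, after the harmless generalisation replacing $\{s_1,s_2\}$ by an arbitrary exactly $3$-separating set $S$ with $|S|\ge 2$, is sound. The two passages you flag as potential obstacles do go through: closure of $\mathcal{L}$ under union and intersection follows from Lemma~\ref{uncrossing} because every member of $\mathcal{L}$ contains $S$ (so pairwise intersections have at least two elements) and is contained in $U\cup S$ (so complements of pairwise unions have at least two elements), and $3$-connectivity then forces $\lambda=2$ whenever both sides have at least two elements. The verification that $(X_u',\{u\},Y_u')$ remains a path of $3$-separations with $S'\subseteq X_u'\subseteq U'\cup S'$ is likewise correct; in the case $u_1\notin X_u$ the sets $X_u'=X_u\cup S'$ and $X_u'\cup\{u\}=(X_u\cup\{u\})\cup S'$ are simply unions of members of $\mathcal{L}$.
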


\begin{lemma}
\label{triangle}
Let $C^*$ be a rank-$3$ cocircuit of a $3$-connected matroid $M$. If $e\in C^*$ has the property that $\cl_M(C^*)-\{e\}$ contains a triangle of $M/e$, then $\si(M/e)$ is $3$-connected
\end{lemma}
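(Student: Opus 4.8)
The plan is to argue by contradiction: suppose $e\in C^*$ satisfies the hypothesis but $\si(M/e)$ is not $3$-connected. By Lemma~\ref{vertical1} this means $M$ has a vertical $3$-separation $(A,\{e\},B)$. First I would record the two structural facts that will do all the work. One, since $C^*$ is a cocircuit of $M$ containing $e$, it is a cocircuit of no minor obtained by contracting $e$; more usefully, $C^*-\{e\}$ is a cocircuit of $M/e$ (contracting an element of a cocircuit removes it from the cocircuit, leaving a cocircuit provided $|C^*|\ge 2$, which holds here), so $C^*-\{e\}$ meets every circuit of $M/e$ that it is not disjoint from—in particular, by orthogonality, any triangle $T$ of $M/e$ contained in $\cl_M(C^*)-\{e\}$ must have $|T\cap(C^*-\{e\})|\ne 1$, hence $T\subseteq C^*-\{e\}$ (it cannot be disjoint from $C^*-\{e\}$ since $T\subseteq\cl_M(C^*)$ and $\cl_M(C^*)$ has rank $3$ so a triangle in it not meeting the rank-$3$ cocircuit would be impossible by a rank count). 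Two, the hypothesis gives such a triangle $T$, so $T$ is a triangle of $M/e$ with $T\subseteq C^*-\{e\}$; equivalently $T\cup\{e\}$ is a circuit of $M$ (or $T$ is already a circuit of $M$, but the $3$-connectivity of $M$ together with $|T|=3$ in a rank-$3$ cocircuit forces $T\cup\{e\}$ to be a $4$-circuit when $T$ is not a triangle of $M$).

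Next I would use the vertical $3$-separation to derive a contradiction. Since $e\in\cl(A)\cap\cl(B)$ and $r(A),r(B)\ge 3$, and since $C^*$ is a cocircuit, $C^*$ cannot be contained in either $A\cup\{e\}$ or $B\cup\{e\}$: a cocircuit meets both sides of any separation of the dual, equivalently $C^*$ is not $1$-separating, so $C^*$ meets both $A$ and $B$. Now bring in the triangle $T\subseteq C^*-\{e\}$. By Lemma~\ref{vertical2} I may assume $B\cup\{e\}=\cl(B\cup\{e\})$; combined with $e\in\cl(A)$ this gives, by Lemma~\ref{orthogonality}, that $e\notin\cl^*(A)$, i.e. $e\in\cl^*(B\cup\{e\})$ trivially but more to the point $A$ is coclosed-ish on the relevant side. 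The circuit $T\cup\{e\}$ must meet both $A$ and $B$ (it is not contained in the closed set $B\cup\{e\}$ unless $T\subseteq B$, and it cannot lie entirely in $A\cup\{e\}$ by the closure argument in Lemma~\ref{3sep2}); so $T$ has an element in $A$ and an element in $B$. The key step is then a rank count inside $\cl_M(C^*)$: since $r(\cl_M(C^*))=3$ and $T$ is a triangle spanning a rank-$2$ flat inside it, I can locate the obstruction by comparing $\sqcap(A,B)$-type quantities—specifically, $T\cup\{e\}$ being a $4$-circuit meeting both sides forces $\lambda_{M/e}$ of the induced partition of $E(M)-e$ to be strictly smaller than $\lambda_M(A)=2$ contributes, contradicting that $(A,B)$ remains a $2$-separation of $M/e$ witnessing non-$3$-connectedness of $\si(M/e)$. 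More carefully: $T\subseteq C^*-\{e\}$ a triangle of $M/e$ means $T$ is a triangle of $M/e$ that crosses the $2$-separation $(A,B)$ of $M/e$, and a triangle crossing a $2$-separation of a connected matroid can be moved to one side (standard), so after moving we get a vertical $3$-separation of $M$ where $C^*$ fails orthogonality with $T\cup\{e\}$; that is the contradiction.

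The main obstacle I anticipate is making the ``triangle of $M/e$ inside the rank-$3$ cocircuit'' interact cleanly with the vertical $3$-separation—in particular, keeping track of whether $T$ is a triangle already in $M$ or only becomes one in $M/e$, and handling the closure bookkeeping via Lemmas~\ref{orthogonality}, \ref{3sep1}, and \ref{3sep2} without a case explosion. The cleanest route is probably: assume $\si(M/e)$ is not $3$-connected, take $(A,\{e\},B)$ vertical with $B\cup\{e\}$ closed and maximal, note $C^*$ meets both $A$ and $B$; then since $T\cup\{e\}\supseteq T$ is a circuit of $M$ with $T\subseteq C^*$ and $|C^*\cap(A\cup\{e\})|,|C^*\cap(B\cup\{e\})|\ge 1$, use orthogonality of the cocircuit $C^*$ with circuits to pin down that $e$ together with one element on each side forms a triangle of $M/e$ in $\cl(A)\cap\cl(B)$, which is exactly the configuration Lemma~\ref{3sep2} forbids once one checks the relevant set is $2$-separating in $M/e$. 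I would then conclude $\si(M/e)$ is $3$-connected.
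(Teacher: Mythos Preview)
The paper does not give its own proof of Lemma~\ref{triangle}; it simply cites~\cite{whi99}. So there is nothing to compare against, and I evaluate your argument on its own.

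Your overall strategy is right: assume a vertical $3$-separation $(A,\{e\},B)$ of $M$ and use $C^*$ together with the triangle $T$ to reach a contradiction. Your observation that $C^*$ meets both $A$ and $B$ (since $e\in\cl(A)\cap\cl(B)$ forces, by orthogonality with $C^*$, an element of $C^*-\{e\}$ on each side) is also correct and is the first genuine step. But your item ``One'' contains a real error: you assert that $C^*-\{e\}$ is a cocircuit of $M/e$, reasoning that ``contracting an element of a cocircuit removes it from the cocircuit, leaving a cocircuit.'' This is false. Dually it would say that deleting an element of a circuit leaves a circuit, which of course it does not. If $e\in C^*$ then $C^*-\{e\}$ is a cocircuit of $M\backslash e$, not of $M/e$; in $M/e$ the set $C^*-\{e\}$ is coindependent, because its complement $E(M)-C^*$ is a hyperplane of $M$ that misses $e$ and therefore spans $M/e$. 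Your subsequent deduction that $T\subseteq C^*-\{e\}$ via orthogonality in $M/e$ therefore collapses, and indeed $T$ need not lie inside $C^*$.

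What does survive is that both $T$ and $C^*-\{e\}$ lie in the rank-$2$ set $\cl_M(C^*)-\{e\}$ of $M/e$, so $C^*-\{e\}\subseteq\cl_{M/e}(T)$. Taking $|T\cap A|\ge 2$ without loss of generality then gives $C^*\cap B\subseteq\cl_{M/e}(T)\subseteq\cl_{M/e}(A)$. The step your outline never pins down is the endgame: with $A'=A\cup(C^*\cap B)$ and $B'=B\setminus C^*$ one has $\lambda_{M/e}(A')\le\lambda_{M/e}(A)=1$, and, crucially, $e\notin\cl_M(B')$ since any circuit of $M$ through $e$ inside $B'\cup\{e\}$ would meet $C^*$ only in $e$, violating orthogonality in $M$. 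That forces $\lambda_M(A')=\lambda_{M/e}(A')\le 1$, and after checking $|B'|\ge 2$ this contradicts the $3$-connectivity of $M$. Your sketch gestures at ``moving the triangle to one side'' and an eventual orthogonality failure, but never isolates this equality $\lambda_M(A')=\lambda_{M/e}(A')$, and the appeals to Lemma~\ref{3sep2} in your final paragraph do not land on a concrete contradiction.
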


We now prove Theorem~\ref{min eltsI}.
\begin{proof}[Proof of Theorem~\ref{min eltsI}]
Suppose that $M$ has exactly four elastic elements $f_1$, $f_2$, $g_1$, and $g_2$. Then, for all $e\in E(M)-\{f_1, f_2, g_1, g_2\}$ either $\si(M/e)$ is not $3$-connected or $\co(M\delete e)$ is not $3$-connected. Thus, by Lemmas~\ref{BS lem 6.1}, \ref{BS lem 6.3} and \ref{vertical1} it suffices to show that there is a partition, say $(\{f_1,f_2\},\{g_1,g_2\})$, of the set of elastic elements of $M$ such that every vertical $3$-separation or cyclic $3$-separation of $M$ is of the form $(X,\{e\},Y)$, where $\{f_1, f_2\}\subsetneqq X$ and $\{g_1, g_2\}\subsetneqq Y$. Suppose that this fails. Then there exists two partitions $(X_1, \{e_1\}, Y_2)$ and $(X_2, \{e_2\}, Y_2)$ of $E(M)$ such that each partition is either a vertical or a cyclic $3$-separation of $M$, and each of the intersections $X_1\cap X_2$, $X_1\cap Y_2$, $Y_1\cap X_2$, and $Y_1\cap Y_2$ contains a unique elastic element. Without loss of generality, we may assume that $e_1\in Y_2$, $e_2\in X_1$, $f_1\in X_1\cap X_2$, $f_2\in X_1\cap Y_2$, $g_1\in Y_1\cap X_2$ and $g_2\in Y_1\cap Y_2$, and that, up to duality, $(X_1, \{e_1\}, Y_1)$ is a vertical $3$-separation.

By uncrossing $X_2\cup\{e_2\}$ with each of $X_1$ and $X_1\cup\{e_1\}$, we have that $(Y_1\cap Y_2)\cup\{e_1\}$ and $Y_1\cap Y_2$ are $3$-separating. If $r(Y_1\cap Y_2)\geq 3$, then $(Y_1\cap Y_2, \{e_1\}, X_1\cup X_2)$ is a vertical $3$-separation of $M$ and so, by Lemma~\ref{elastic1}, $Y_1\cap Y_2$ contains at least two elastic elements, a contradiction. We deduce that $r((Y_1\cap Y_2)\cup\{e_1\})=2$. Furthermore, if $Y_1\cap X_2=\{g_1\}$, then, as $M$ has no $4$-element fans, $|Y_1\cap Y_2|\ge 3$ and $Y_1\cap Y_2$ contains a cycle. Also, as $e_1\in \cl(X_1)$, we have that $X_1\cup \{e_1\}$ contains a cycle, and so $(Y_1-\{g_1\}, \{g_1\}, X_1\cup\{e_1\})$ is a cyclic $3$-separation of $M$, contradicting the fact that $g_1$ is elastic. Thus $|Y_1\cap X_2|\geq 2$.

Next, by uncrossing $Y_1$ with each of $X_2$ and $X_2\cup\{e_2\}$, we see that $(X_1\cap Y_2)\cup\{e_1, e_2\}$ and $(X_1\cap Y_2)\cup\{e_2\}$ are exactly $3$-separating. If $r((X_1\cap Y_2)\cup \{e_2\})\geq 3$, then $((X_1\cap Y_2)\cup \{e_2\}, \{e_1\}, X_2\cup Y_1)$ is a vertical $3$-separation of $M$ and it follows by Lemma~\ref{elastic1} that $X_1\cap Y_2$ contains at least two elastic elements, a contradiction. We deduce that $r((X_1\cap Y_2)\cup \{e_1, e_2\})=2$. In particular, $e_2\in \cl(\{e_1, f_2\})$, and so $(X_2, \{e_2\}, Y_2)$ is a vertical $3$-separation of $M$. By Lemma~\ref{vertical2}, we may assume that $X_2\cup \{e_2\}$ is closed.

If $Y_1\cap Y_2=\{g_2\}$, then either $Y_2\cup\{e_2\}$ is a $4$-element fan, a contradiction, or $(Y_2-\{g_2\}, \{g_2\}, X_2\cup \{e_2\})$ is a cyclic $3$-separation of $M$, contradicting the fact that $g_2$ is elastic. Hence $|Y_1\cap Y_2|\geq 2$. Lastly, if $X_1\cap Y_2=\{f_2\}$, then $(Y_2-\{f_2\}, \{f_2\}, X_2\cup \{e_2\})$ is a cyclic $3$-separation of $M$, another contradiction. Thus, $|X_1\cap Y_2|\geq 2$ and the set $(X_1\cap Y_2)\cup\{e_1,e_2\}$ is a segment of at least four elements. As $X_2\cup \{e_2\}$ is closed, it follows that $Y_2$ is a rank-$3$ cocircuit and so, as $|Y_1\cap Y_2|\ge 3$, Lemmas~\ref{segdelete} and~\ref{triangle} imply that every element of $X_1\cap Y_2$ is elastic. This final contradiction completes the proof. 
\end{proof}

The next lemma is used in the proof of Theorem~\ref{robust}. It is the analogue of Theorem~\ref{min eltsII} for when the size of $E(N)$ is at most three.

\begin{proposition}
\label{smallN2}
Let $M$ be a $3$-connected matroid with no $4$-element fans and no $\Theta$-separators, and let $N\in\{U_{0, 1}, U_{1, 1}, U_{1, 2}, U_{1, 3}, U_{2, 3}\}$. If $r(M), r^*(M)\geq 3$ and $|E(M)|\geq 8$, then $M$ has at least four $N$-elastic elements. Moreover, if $M$ has exactly four $N$-elastic elements, then $M$ has path-width three.
\end{proposition}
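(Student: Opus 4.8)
The plan is to split according to the rank and corank of $M$, using duality to halve the work. Since $U_{0,1}^{*}=U_{1,1}$, $U_{1,2}^{*}=U_{1,2}$ and $U_{1,3}^{*}=U_{2,3}$, the family in the statement is closed under duality; moreover $\si(M^{*}/e)=(\co(M\delete e))^{*}$ and $\co(M^{*}\delete e)=(\si(M/e))^{*}$, so $e$ is $N$-elastic in $M$ precisely when $e$ is $N^{*}$-elastic in $M^{*}$, and since having path-width three, no $4$-element fans, and no $\Theta$-separators are all self-dual, we may assume $r(M)\le r^{*}(M)$. As $|E(M)|\ge 8$, this leaves two cases: $r(M),r^{*}(M)\ge 4$, or $r(M)=3$ (whence $r^{*}(M)=|E(M)|-3\ge 5$).

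In the first case $|E(N)|\le 3$, so Lemma~\ref{smallN1} gives that every elastic element of $M$ is $N$-elastic, while Theorem~\ref{elastic2} gives at least four elastic elements; hence $M$ has at least four $N$-elastic elements. Since every $N$-elastic element is elastic, if $M$ has exactly four $N$-elastic elements it has exactly four elastic elements, and Theorem~\ref{min eltsI} gives path-width three.

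For the second case, $r(M)=3$, the first step is to record the relevant elementary facts. As $M$ is simple and $M/e$ is connected of rank two, $\si(M/e)\cong U_{2,k_{e}}$ with $k_{e}\ge 3$ the number of rank-$2$ flats of $M$ through $e$; thus $\si(M/e)$ is always $3$-connected, so $e$ is elastic if and only if $\co(M\delete e)$ is $3$-connected, and a short rank computation (using $|E(M)|\ge 8$) shows that when $\co(M\delete e)$ is $3$-connected it is isomorphic to $U_{2,m}$ with $m\ge 4$ or is a $3$-connected rank-$3$ matroid on at least seven elements; in either case it has a $U_{1,3}$-minor, hence an $N$-minor for every $N$ in the list. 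On the other hand $\si(M/e)\cong U_{2,k_{e}}$ always has a $U_{2,3}$-minor, and has a $U_{1,3}$-minor exactly when $k_{e}\ge 4$. Consequently, for $N\in\{U_{0,1},U_{1,1},U_{1,2},U_{2,3}\}$ every elastic element is $N$-elastic, and one concludes exactly as in the first case (Theorems~\ref{elastic2} and~\ref{min eltsI}); while for $N=U_{1,3}$ an elastic element $e$ is $N$-elastic if and only if $k_{e}\ge 4$.

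The remaining case, $N=U_{1,3}$ with $r(M)=3$, is the crux. Using Lemma~\ref{vertical1} together with the fact that a cyclic $3$-separation $(X,\{e\},Y)$ of a rank-$3$ matroid forces $r(X)=r(Y)=2$, one gets that $e$ is elastic whenever $E(M)-\{e\}$ cannot be covered by two rank-$2$ flats of $M$ each avoiding $e$. If every elastic element $e$ has $k_{e}\ge 4$, the argument of the first case applies verbatim and completes the proof. Otherwise fix an elastic element $e$ with $k_{e}=3$ and let $L_{1},L_{2},L_{3}$ be the flats through $e$ with $|L_{1}|\ge|L_{2}|\ge|L_{3}|$: if $|L_{2}|=|L_{3}|=2$ then $(L_{1},E(M)-L_{1})$ is a $2$-separation, so $|L_{2}|\ge 3$, and since $\sum_{i}(|L_{i}|-1)=|E(M)|-1\ge 7$ this forces $|L_{1}|\ge 4$; and if in addition $|L_{2}|=3$ and $|L_{3}|=2$ then $|L_{1}|=|E(M)|-3$, so $E(M)-L_{1}$ is a triad which together with the triangle $L_{2}$ gives a $4$-element fan, contrary to hypothesis. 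In each remaining case $(|L_{2}|-1)+(|L_{3}|-1)\ge 4$, and I would verify that every element $z$ of $(L_{2}-\{e\})\cup(L_{3}-\{e\})$ is elastic --- by Lemma~\ref{segdelete} when it lies on a flat of size at least four, and via the above criterion and $|L_{1}|\ge 4$ otherwise --- and lies on at least $|L_{1}|\ge 4$ rank-$2$ flats, since any flat through $z$ meets $L_{1}-\{e\}$ in at most one point; this already produces four $U_{1,3}$-elastic elements. Finally, as $|E(M)|\ge 8$ gives $|L_{1}-\{e\}|\ge 3$, and at most two elements of $L_{1}-\{e\}$ can lie on exactly three flats (two such would both lie on the flat spanned by a common pair from $(L_{2}-\{e\})\cup(L_{3}-\{e\})$, hence coincide), at least one further $U_{1,3}$-elastic element lies in $L_{1}-\{e\}$; so in this subcase $M$ has at least five such elements, the ``exactly four'' hypothesis cannot hold, and combining with the case $k_{e}\ge 4$ finishes the proof. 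The part I expect to need the most care is this last finite analysis of the pencils of flats at elastic elements, together with the rank computation showing $\co(M\delete e)$ always carries a $U_{1,3}$-minor.
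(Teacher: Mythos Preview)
Your approach mirrors the paper's: reduce via duality and Lemma~\ref{smallN1} to $r(M)=3$ and $N\cong U_{1,3}$, and then analyse an elastic element $e$ with $k_e=3$ via the three lines $L_1,L_2,L_3$ through it. The paper covers this last configuration with a one-sentence ``routine check''; your case analysis is more explicit and mostly correct.

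One step fails as written. In the subcase $|L_3|=2$ and $|L_2|\ge 4$, the unique element $c\in L_3-\{e\}$ is \emph{not} elastic: here $E(M)-\{c\}=L_1\cup L_2$, and $(L_1,\{c\},L_2-\{e\})$ is a cyclic $3$-separation of $M$ (both sides have rank~$2$ and contain triangles, and $c\notin\cl(L_1)\cup\cl(L_2-\{e\})$), so $\co(M\delete c)$ is not $3$-connected. Your own covering criterion flags this, since $L_1$ and $L_2$ are rank-$2$ flats avoiding $c$ whose union is $E(M)-\{c\}$; so the ``via the criterion and $|L_1|\ge 4$'' branch cannot certify $c$. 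The repair is immediate: in this subcase every element of $(L_1-\{e\})\cup(L_2-\{e\})$ is elastic by Lemma~\ref{segdelete}, and for any such $w$ the lines joining $w$ to the points of the other long segment are pairwise distinct from each other and from the segment containing $w$, giving $k_w\ge 4$; this already yields at least six $U_{1,3}$-elastic elements. Separately, your parenthetical bound on $L_1-\{e\}$ is only needed when $|L_2|=|L_3|=3$, and there the cleaner justification is that there are exactly two perfect matchings between $L_2-\{e\}$ and $L_3-\{e\}$, each of which determines at most one concurrent point on $L_1$.
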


\begin{proof}
By Theorem~\ref{elastic2}, $M$ has at least four elastic elements. If every elastic element of $M$ is $N$-elastic, then the proposition follows from Theorem~\ref{min eltsI}. Thus we may assume that $M$ has an elastic element $e$ which is not $N$-elastic. By Lemma~\ref{smallN1}, we may assume that, up to duality, $r(M)=3$. Since $|E(M)|\geq 8$, it follows that $r^*(M)\geq 5$, and so the $3$-connected matroid $\co(M\delete e)$ has corank at least four and rank at least two. In particular, $\co(M\delete e)$ has a $U_{1, 3}$- and a $U_{2, 3}$-minor, and therefore an $N$-minor. Hence, as $e$ is not $N$-elastic, we have that $\si(M/e)$ has no $N$-minor. This is only possible if $\si(M/e)\cong U_{2,3}$ and $N\cong U_{1,3}$, in which case $M$ is comprised of a triangle, say $\{e_1, e_2, e_3\}$, and three segments $L_1$, $L_2$, and $L_3$ such that $L_1\cap L_2\cap L_3=\{e\}$ and $e_i\in L_i$ for all $i\in \{1, 2, 3\}$. As $M$ has at least eight elements, at least one of these segments, say $L_1$, has at least four elements. A routine check shows that every element of $E(M)-L_1$ as well as at least one element of $L_1$ is $U_{2, 4}$-elastic, and thus $U_{1, 3}$-elastic and $U_{2, 3}$-elastic. Hence $M$ has at least five $N$-elastic elements, completing the proof of the lemma.
\end{proof}

To see that the requirement of Proposition~\ref{smallN2} that $M$ have rank and corank at least three is necessary, consider the case when $M$ is isomorphic to $U_{2,  5}$ and $N$ is isomorphic to $U_{1, 3}$. If $e\in E(M)$, then $M/e$, which is isomorphic to $U_{1, 4}$, has a $U_{1, 3}$-minor but $\si(M/e)$, which is isomorphic to $U_{1, 1}$, has no $U_{1, 3}$-minor. Thus, in this case, $M$ has no $N$-elastic elements. To see that the requirement $|E(M)|\geq 8$ is necessary, consider the case when $M$ is isomorphic to $F_7$ and $N$ is isomorphic to $U_{1, 3}$. If $e\in E(M)$, then $M/e$ has a $U_{1, 3}$-minor but $\si(M/e)$, which is isomorphic to $U_{2, 3}$, has no $U_{1, 3}$-minor. Thus, again, $M$ has no $N$-elastic elements.

We next prove Theorem~\ref{min eltsII}.

\begin{proof}[Proof of Theorem~\ref{min eltsII}]
Suppose that $M$ has exactly two $N$-elastic elements $s_1$ and $s_2$. We first show that $|E(M)-(K\cup \{s_1, s_2\})|\geq 2$. If $M$ has no $\Theta$-separators, then, by Theorem~\ref{elastic2}, $M$ has at least four elastic elements and so $|E(M)-(K\cup \{s_1, s_2\})|\ge 2$. Therefore assume that $M$ has a $\Theta$-separator. Let $W$ be a rank-$2$ subset and $Z$ a corank-$2$ subset of $E(M)$ such that $W\cup Z$ is a $\Theta$-separator of $M$. Since $M$ has no $4$-element fans, it follows that $\min\{|W|, |Z|\}\geq 3$. By Lemma~\ref{theta elastic}, at most one element of $W\cup Z$ is elastic. Therefore, if $|E(M)-(K\cup \{s_1, s_2\})|\leq 1$, then at least one element of $W$ and at least one element of $Z$ is $N$-revealing. As $M$ has no $\Theta$-separators revealing $N$, we deduce that $|E(M)-(K\cup \{s_1, s_2\})|\geq 2$. 
	
Next, for each $e\in K$, we select a certain path of $3$-separations $(X_e, \{e\}, Y_e)$. Let $e\in K$. Up to duality, we may assume that $\si(M/e)$ has an $N$-minor and is not $3$-connected. Then, by Lemmas~\ref{vertical1}, \ref{vertical2} and~\ref{2sep}, there is a vertical $3$-separation $(X, \{e\}, Y)$ of $M$ such that $Y\cup \{e\}$ is closed and $|X\cap E(N)|\leq 1$. By Theorem~\ref{thm: maximal}, $X$ contains at least two $N$-elastic elements. Thus $\{s_1, s_2\}\subseteq X$. Furthermore, by Lemma~\ref{BS lem 4.5+}, $M/x$ has an $N$-minor for all $x\in X$ and there is at most one element, $x'$ say, for which $M\delete x'$ has no $N$-minor. If there is no such element $x'$, then let $X_e=X$ and $Y_e=Y$. Otherwise, we note by Lemma~\ref{BS lem 4.5+}, that $x'\in \cl^*(Y)$ and thus $(X-\{x'\},\{e\},Y\cup\{x'\})$ is a path of $3$-separations by Lemma~\ref{3sep1}. In this case, let $X_e=X-\{x'\}$ and $Y_e=Y\cup \{x'\}$. Observe that, by this selection process, $\{s_1,s_2\}\in X_e$ and, for all $x\in X_e$, both $M/x$ and $M\delete x$ have an $N$-minor. Moreover, as $|E(N)|\geq 4$, the latter property implies that $X_e \subseteq K\cup\{s_1,s_2\}$.
	
Now let $Y=E(M)-(K\cup\{s_1, s_2\})$. By an application of Lemma~\ref{BS lem 6.3}, there is an ordering $(e_1, e_2, \ldots, e_k)$ of $K$ such that $(\{s_1, s_2\}, \{e_1\}, \{e_2\}, \ldots, \{e_k\}, Y)$ is a path of $3$-separations in $M$. It remains to show that $M\delete e_i$ and $M/e_i$ have an $N$-minor for all $i < k$. Dualising if necessary, we may assume that $e_k\in \cl(Y)$. We may also assume that $k\geq 2$.

\begin{sublemma}
Let $A=\{s_1, s_2, e_1, e_2, \ldots, e_k\}$ and let $L=\cl(Y)-Y$. If $|L|\ge 2$, then $M\delete \ell$ and $M/\ell$ have an $N$-minor for all $\ell\in L$. 
\label{sub}
\end{sublemma}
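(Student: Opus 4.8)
The sublemma concerns elements $\ell$ in the segment/closure-slack $L=\cl(Y)-Y$, which are not (yet) among the listed elements of the path. My goal is to show every such $\ell$ has both $M/\ell$ and $M\delete \ell$ with an $N$-minor, provided $|L|\ge 2$. First I would record the structural facts already available: $(\{s_1,s_2\},\{e_1\},\dots,\{e_k\},Y)$ is a path of $3$-separations, so $A=\{s_1,s_2,e_1,\dots,e_k\}$ is $3$-separating, hence $\lambda_M(Y)\le 2$; since $M$ is $3$-connected and $|Y|\ge 2$, in fact $(A,Y)$ is an exact $3$-separation. Because $e_k\in\cl(Y)$ (arranged by duality), the set $Y\cup\{e_k\}$ is still $3$-separating, and more generally the elements of $L\subseteq\cl(Y)$ satisfy $\cl(Y)=\cl(Y\cup L)$ so $r(\cl(Y))=r(Y)$ and $L$ sits inside the rank-$r(Y)$ flat spanned by $Y$.

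Next I would handle the contraction side. For $\ell\in L\subseteq\cl(Y)$, I claim $M/\ell$ has an $N$-minor. The point is that $M/e_k$ has an $N$-minor (this is part of what the selection process and the ordering guarantee for $i<k$, and $e_k\in\cl(Y)$ puts $e_k$ in the same flat as $\ell$), and contracting any element of a rank-$2$ or larger flat inside $\cl(Y)$ has the same effect up to parallel/loop cleanup: more precisely, $\si(M/\ell)\cong\si(M/e_k)$ whenever $\ell$ and $e_k$ lie on a common line of $M$, and when they don't one uses that $N$ is simple (so $|E(N)|\ge 4$ forces $N\preceq\si(M/\ell)$ iff $N\preceq M/\ell$) together with Lemma~\ref{BS lem 4.5+} applied to the vertical $3$-separation whose $Y$-side is $\cl(Y)-\{\ell\}$ (using that $|E(N)\cap(\text{$X$-side})|\le 1$). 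Symmetrically, for the deletion side, $\ell\in L=\cl(Y)-Y$ means $\ell\notin\cl^*(A)$ by Lemma~\ref{orthogonality} applied to the partition $(Y,\{\ell\},A)$ (since $\ell\in\cl(Y)$), so $(A, Y\cup\{\ell\}\setminus\{\ell\}\cdots)$ — more cleanly, $\ell$ is not in a series class meeting $A$, so $\co(M\delete\ell)$ still has $A$ $3$-separating and one applies the dual of Lemma~\ref{BS lem 4.5+}, or uncrossing, to transfer the $N$-minor of $M\delete e_{k-1}$ (which exists since $k-1<k$) across.

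A cleaner route for the deletion side, which I would prefer: since $|L|\ge 2$, pick $\ell'\in L$ with $\ell'\ne\ell$; then $Y\cup\{\ell'\}\subseteq\cl(Y)$ so $(A,Y\cup\{\ell'\})$ — wait, that moves $\ell'$ into $Y$. Better: $L\cup(Y\cap\cl(Y))$ spans a flat, and $M|(\cl(Y))$ is connected with $L$ having $\ge 2$ elements, so for $\ell\in L$, $M\delete\ell$ restricted to $\cl(Y)-\{\ell\}$ still spans the same flat; this shows $\cl(Y)-\{\ell\}$ is $3$-separating in $M\delete\ell$ and contains no coloop of $M\delete\ell$, so we may run the argument of Lemma~\ref{BS lem 4.5+} (its dual) with base separation $(A,\cl(Y)-\{\ell\})$ in $M\delete\ell$. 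The hard part, and where I expect to spend the most care, is the contraction side when $\ell$ is \emph{not} collinear with any $e_i$ already known to have $M/e_i$ with an $N$-minor: here one cannot simply invoke $\si(M/\ell)\cong\si(M/e_j)$, and must instead verify that the vertical $3$-separation $(A-\{?\},\{\ell\}\cdots)$ or a suitable repositioning still satisfies the hypotheses of Lemma~\ref{BS lem 4.5+} — in particular that the $N$-side has at most one element of $E(N)$, which follows because $E(N)\cap A$ already contains $\{s_1,s_2\}$-free structure... I would nail this down by observing that $\{s_1,s_2\}\subseteq$ the $N$-elastic elements forces $|E(N)\cap\{$those$\}|$ to behave, and that $Y$, being disjoint from $K\cup\{s_1,s_2\}$, can contain at most one element of $E(N)$ by the same reasoning used to produce the $X_e$'s. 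Once both sides are established, the sublemma follows, and it will feed into extending the path of $3$-separations to include $L$, i.e. into the final "Moreover" clause of Theorem~\ref{min eltsII}.
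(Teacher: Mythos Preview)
Your proposal has a genuine gap. The most serious issue is circularity: you invoke that $M\backslash e_{k-1}$ has an $N$-minor ``since $k-1<k$'', but this is precisely the ``Moreover'' conclusion of Theorem~\ref{min eltsII}, which is proved \emph{after} and \emph{using} the sublemma. Similarly, the claim that $M/e_k$ has an $N$-minor is not available at this point: $e_k$ is $N$-revealing, but you do not yet know whether it is $\si(M/e_k)$ or $\co(M\backslash e_k)$ that carries the $N$-minor. The assertion that $\si(M/\ell)\cong\si(M/e_k)$ whenever $\ell$ and $e_k$ are collinear is also false in general, and the attempt to rescue it via Lemma~\ref{BS lem 4.5+} never pins down a vertical $3$-separation with the required $|X\cap E(N)|\le 1$ condition on the correct side.

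The paper's argument sidesteps all of this by exploiting the definition of $K$ directly. First, submodularity gives $r(L)=r(\cl(Y)\cap A)\le r(Y)+r(A)-r(M)=2$, so $L$ lies on a line; you never record this, and it is what makes the rest work. For each $\ell\in L$ one then checks that either $(A-\{\ell\},\{\ell\},Y)$ is a vertical $3$-separation (so $\si(M/\ell)$ is not $3$-connected and Bixby's Lemma forces $\co(M\backslash\ell)$ to be $3$-connected), or one of $A,Y$ has rank two (so $\ell$ lies in a segment of size at least four and Lemma~\ref{segdelete} applies). Either way $\co(M\backslash\ell)$ is $3$-connected; since $\ell\in K\cup\{s_1,s_2\}$ is either $N$-revealing or $N$-elastic, this forces $\si(M/\ell)$, and hence $M/\ell$, to have an $N$-minor. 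The deletion side is then obtained by contracting a \emph{different} element $\ell$ of $L$: in the rank-two case $\ell'$ becomes parallel in $M/\ell$ and simplicity of $N$ suffices, while in the vertical case $\ell'$ lies in $\cl(A-\{\ell,\ell'\})\cap\cl(Y)$ inside the $2$-separation $(A-\{\ell\},Y)$ of $M/\ell$, and Lemma~\ref{2sep} yields that $M/\ell\backslash\ell'$, hence $M\backslash\ell'$, has an $N$-minor.
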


Since $e_k\in L$, it follows by submodularity that
\begin{align*}
1\le r(L)=r(\cl(Y)\cap A)\leq r(Y) + r(A) - r(M)=2.
\end{align*}
Let $\ell\in L$, and suppose that $|L|\ge 2$. Now either $(A-\{\ell\}, \{\ell\}, Y)$ is a vertical $3$-separation, or at least one of $A$ and $Y$ has rank two. Thus, by a combination of Lemma~\ref{vertical1} and Bixby's Lemma or by Lemma~\ref{segdelete}, the matroid $\co(M\delete \ell)$ is $3$-connected. Therefore, by the definition of $K$, the matroid $\si(M/\ell)$, and hence $M/\ell$, has an $N$-minor. If either $A$ or $Y$ has rank two, then, as $N$ is simple and $M/\ell$ has an $N$-minor, $M\delete \ell'$ has an $N$-minor for all $\ell'\in L-\{\ell\}$. Furthermore, if $(A-\{\ell\}, \{\ell\}, Y)$ is a vertical $3$-separation, then $(A-\{\ell\}, Y)$ is a $2$-separation of $M/\ell$ in which $\ell'\in \cl(A-\{\ell, \ell'\})\cap \cl(Y)$ for all $\ell'\in L-\{\ell\}$. Since $N$ is $3$-connected, either $|(A-\{\ell\})\cap E(N)|\le 1$ or $|Y\cap E(N)|\le 1$. Also, as $M/\ell/\ell'$ is not connected, $M/\ell\delete \ell'$ is connected. Thus, by Lemma~\ref{2sep}, $M\delete \ell'$ has an $N$-minor for all $\ell'\in L-\{\ell\}$. Since the choice of $\ell$ was arbitrary, we deduce that if $|L|\ge 2$, then $M\delete \ell$ and $M/\ell$ have an $N$-minor for all $\ell\in L$.

If $Y$ spans $E(M)$, then, as $k\ge 2$, it follows by (\ref{sub}) that $M\delete e_i$ and $M/e_i$ have an $N$-minor for all $i < k$. Thus we may suppose that $Y$ does not span $E(M)$. Let $j$ be the highest index such that $e_j\not\in \cl(Y)$. Let $A_j=\{s_1, s_2, e_1, e_2, \ldots, e_j\}$ and let $B_j=\{e_{j+1}, e_{j+2}, \ldots, e_k\}\cup Y$. If $A_j$ is independent, then, as $r(A_j)+r(B_j)-r(M)=2$, it follows that $r^*(A_j)=2$. In this case, as $M\delete s_1$ has an $N$-minor, $M/e_i$ has an $N$-minor for all $i\le j$. Furthermore, if $|A_j|\ge 2$, then, by the dual of Lemma~\ref{segdelete}, $M/e_i$ is $3$-connected for all $i\le j$. So, by the definition of $K$, the matroid $M\delete e_i$ has an $N$-minor for all $i\le j$. If $|A_j|=1$, then $j=1$, $\{s_1, s_2, e_1\}$ is a triad of $M$, and $e_k\in \cl(\{s_1, s_2, e_1\})$. Since $M$ has no $4$-element fans, it follows by Lemma~\ref{triangle} that $M/e_1$ is $3$-connected. Thus, again by the definition of $K$, the matroid $M\delete e_1$ has an $N$-minor. Therefore, we may assume that $A_j$ is dependent. If $A_j-\{e_j\}$ is independent, then $e_j\in \cl(A_j-\{a_j\})$ and $e_j\in \cl^*(B_j)$, contradicting orthogonality. Therefore $A_j-\{a_j\}$ is dependent and so, as $B_j$ is dependent, $\{A_j-\{e_j\}, \{e_j\}, B_j\}$ is a cyclic $3$-separation of $M$. Since $\co(M\delete e_j)$ is not $3$-connected, it follows by Bixby's Lemma and the definition of $K$ that $M\delete e_j$ has an $N$-minor. If $|B_j\cap E(N)|\le 1$, then, by the dual of Theorem~\ref{thm: maximal}, $B_j$ contains two $N$-elastic elements. But $M$ has exactly two $N$-elastic elements $s_1$ and $s_2$, and $\{s_1, s_2\}\subseteq A_j$. Hence $|(A_j-\{e_j\})\cap E(N)|\le 1$.

Now let $L'=\cl^*(B_j)-B_j$. If $|L'|\ge 2$, then, by a similar, but dual, argument used to established~(\ref{sub}), $M\delete \ell'$ and $M/\ell'$ have an $N$-minor for all $\ell'\in L'$. Furthermore, if $|L'|\ge 2$, then $|\cl(A_j-L')\cap \cl(B_j)|=0$, while if $|L'|=1$, then $|\cl(A_j-\{e_j\})\cap \cl(B_j)|\le 1$. If, for some $i\in \{1, 2, \ldots, k-1\}$, we have $\{e_i\}=\cl(A_j-\{e_j\})\cap \cl(B_j)$, then, as $e_i\in \cl(Y)$, it follows by~(\ref{sub}) that $M\delete e_i$ and $M/e_i$ have an $N$-minor. Thus, by the dual of Lemma~\ref{BS lem 4.5+}, to complete the proof of the theorem it suffices to show that $M/e_j$ has an $N$-minor when $|L'|=1$. Since $\si(M/e_j)$ is $3$-connected, $M/e_j/e_k$ is connected. Therefore, as $(A_j, Bj-\{e_k\})$ is a $2$-separation of $M/e_k$, Lemma~\ref{2sep} implies that $M/e_j$ has an $N$-minor.
\end{proof}

\section{Applications to Fixed-Basis Theorems}
\label{sec: fixed basis}

In this section, we show that two previous results concerning the removal of elements relative to a fixed basis are consequences of the results in this paper and~\cite{dru21}. Moreover, we also resolve a question posed in~\cite{whi13}. Let $M$ be a $3$-connected matroid and let $B$ be a basis of $M$. Following \cite{whi13}, we say that an element $e$ of $M$ is \emph{removable with respect to $B$} if either 
\begin{enumerate}[{\rm (i)}]
\item $e\in B$ and $\si(M/e)$ is $3$-connected, or

\item $e\in E(M)-B$ and $\co(M\delete e)$ is $3$-connected.
\end{enumerate}
Of course, if an element is elastic, then it is removable with respect to any basis. However, using an appropriate choice for a basis $B$, it is easily checked that a $5$-element fan may have no removable elements with respect to $B$. Nevertheless, removable elements are abundant in all larger $\Theta$-separators. The straightforward proof of the following lemma is omitted.

\begin{lemma}
\label{theta A}
Let $M$ be a $3$-connected matroid and let $B$ be a basis of $M$. Let $W$ be a rank-$2$ subset and let $Z$ be a corank-$2$ subset of $M$ such that $W\cup Z$ is a $\Theta$-separator of $M$ with at least six elements. Then
\begin{enumerate}[{\rm (i)}]
\item $|\cl(W)-B|\geq |\cl(W)|-2$ and $\co(M\delete w)$ is $3$-connected for all $w\in \cl(W)$, and

\item $|B \cap \cl^*(Z)|\geq |\cl^*(Z)|-2$ and $\si(M/z)$ is $3$-connected for all $z\in \cl^*(Z)$.
\end{enumerate}
\end{lemma}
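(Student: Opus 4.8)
The plan is to prove Lemma~\ref{theta A} directly from the structure of $\Theta_n$ and $\Theta_n^-$, using the segment/cosegment descriptions together with Lemmas~\ref{segdelete} and~\ref{triangle} and their duals. By duality it suffices to prove part~(i); part~(ii) then follows by applying~(i) to $M^*$, since $B^*=E(M)-B$ is a basis of $M^*$ and $W\cup Z$ is a $\Theta$-separator of $M$ if and only if it is a $\Theta$-separator of $M^*$. So fix the setup of~(i): $W\cup Z$ is a $\Theta$-separator of $M$ with at least six elements, so $n=\max\{|W|,|Z|\}\geq 3$ and, because $W\cup Z$ has at least six elements, in fact $n\geq 3$ with the configuration large enough that $\cl(W)$ is a segment with at least four elements. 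Indeed $\cl(W)=W$ in $\Theta_n$ and $\Theta_n^-$ has $|W|=n-1$; combined with $|W\cup Z|=|W|+|Z|\geq 6$ and $\min\{|W|,|Z|\}\geq 3$ (as $M$ has no small fans one checks $n\geq 3$), the segment $\cl(W)$ has at least $\max\{3, |W|\}$ elements, which is at least $4$ unless we are in a boundary case with $|W|=3$; in the latter case $|Z|\geq 3$ forces $r^*(M)\geq 4$ and $\cl(W)$ is a triangle lying in a rank-$3$ cocircuit, which we handle separately via Lemma~\ref{triangle}'s dual.

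First I would establish the claimed deletion property: for every $w\in \cl(W)$, $\co(M\delete w)$ is $3$-connected. If $|\cl(W)|\geq 4$, this is immediate from Lemma~\ref{segdelete} applied to the segment $L=\cl(W)$, since then $M\delete w$ is already $3$-connected. If $|\cl(W)|=3$, then $W$ is a triangle and, since $W\cup Z$ has at least six elements with $|W|\in\{3,4\}$, we must have $|W|=3$ and $|Z|\geq 3$; here $Z$ is a corank-$2$ set and one checks directly from the circuit structure of $\Theta_n$/$\Theta_n^-$ that $\cl(W)$ is contained in a rank-$3$ cocircuit of $M$ and that contracting any $w\in W$ leaves a triangle among the remaining elements of that cocircuit, so Lemma~\ref{triangle} gives that $\si(M\delete w)$-type statement dually, i.e. $\co(M\delete w)$ is $3$-connected. (This boundary analysis is short but needs the explicit $\Theta_n$ circuit list from the introduction.)

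Next I would establish the counting bound $|\cl(W)-B|\geq |\cl(W)|-2$. The point is that $\cl(W)$ is a segment (rank-$2$ flat) of $M$, so $B\cap \cl(W)$ is an independent subset of a rank-$2$ set and hence $|B\cap \cl(W)|\leq 2$; therefore $|\cl(W)-B|=|\cl(W)|-|B\cap\cl(W)|\geq |\cl(W)|-2$. This is the entirely routine half. Dually, $\cl^*(Z)$ is a corank-$2$ coflat, so $|(E(M)-B)\cap \cl^*(Z)|\leq 2$, giving $|B\cap \cl^*(Z)|\geq |\cl^*(Z)|-2$ for part~(ii).

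The main obstacle, such as it is, is the boundary case $|W|=3$ (equivalently $|\cl(W)|=3$) in part~(i): there the segment $\cl(W)$ is too small for Lemma~\ref{segdelete}, and one must instead verify, using the explicit circuits of $\Theta_n$ and $\Theta_n^-$, that $\cl(W)$ sits inside a rank-$3$ cocircuit $C^*$ with $\cl_M(C^*)-\{w\}$ containing a triangle of $M\delete w$ for each $w\in \cl(W)$, so that the dual of Lemma~\ref{triangle} applies; symmetrically, the case $|Z|=3$ is the obstacle for part~(ii). Everything else — the counting bounds and the case $|\cl(W)|\geq 4$ — is a one-line application of Lemma~\ref{segdelete} or of the independence of $B$ in a rank-$2$ flat, which is precisely why the paper remarks that the proof is straightforward and omits it.
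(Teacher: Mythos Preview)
The paper omits this proof as straightforward, so there is no text to compare against directly. Your overall plan is right: reduce to (i) by duality, get the counting bound from $|B\cap\cl(W)|\le r(\cl(W))=2$, and handle $|\cl(W)|\ge 4$ by Lemma~\ref{segdelete}. That is almost certainly the intended argument.

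The gap is in the boundary case $|\cl(W)|=3$. You assert that $\cl(W)$ lies in a rank-$3$ cocircuit $C^*$ of $M$ with $\cl(C^*)-\{w\}$ containing a triangle of $M/w$, and then appeal to Lemma~\ref{triangle} ``dually'' to obtain that $\co(M\ba w)$ is $3$-connected. This is confused on two counts. First, the hypotheses you describe are those of Lemma~\ref{triangle} itself, whose conclusion is that $\si(M/w)$ is $3$-connected, not $\co(M\ba w)$; the word ``dually'' does not convert one into the other. The genuine dual of Lemma~\ref{triangle} would require a corank-$3$ circuit $C$ with $w\in C$ and a triad of $M\ba w$ inside $\cl^*_M(C)-\{w\}$. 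Second, and more seriously, the claim that $W$ sits inside a rank-$3$ cocircuit of $M$ is not justified: when $M|S\in\{\Theta_n,\Theta_n^-\}$ you only know the \emph{circuits} of $M$ lying in $S$, and cocircuits of the restriction $M|S$ need not be cocircuits of $M$. So in this case you have neither the cocircuit you claim nor the triad needed for the dual statement.

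The clean route for $|\cl(W)|=3$ is to bypass Lemma~\ref{triangle} and use Lemma~\ref{exception} with Bixby's Lemma: for every segment element $w\in W$, Lemma~\ref{exception} gives that $\si(M/w)$ is \emph{not} $3$-connected, and Bixby's Lemma then forces $\co(M\ba w)$ to be $3$-connected. When instead $M^*|S\in\{\Theta_n,\Theta_n^-\}$, apply Lemma~\ref{exception} to $M^*$; the hypothesis $|W\cup Z|\ge 6$ rules out $\Theta_3^-$ on that side, so for each $w\in W$ the circuit $(W-\{w_i\})\cup\{z_i\}$ of $M^*$ exists and the exception clause does not trigger.
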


We now show that the main result of \cite{whi13} follows from Theorems~\ref{elastic2} and~\ref{min eltsI}, and a treatment of $\Theta$-separators.

\begin{theorem}[\cite{whi13}, Theorem 1.1]
Let $M$ be a $3$-connected matroid with no $4$-element fans, where $|E(M)|\geq 4$. Let $B$ be a basis of $M$. Then $M$ has at least four elements that are removable with respect to $B$. Moreover, if $M$ has exactly four removable elements with respect to $B$, then $M$ has path-width three.
\end{theorem}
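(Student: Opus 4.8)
The plan is to leverage the existing results about elastic elements together with a careful treatment of the sole obstruction: fans and $\Theta$-separators, where removability can depend on the chosen basis $B$. First I would observe that every elastic element of $M$ is removable with respect to any basis $B$, so by Theorem~\ref{elastic2}, if $M$ has no $\Theta$-separators, then $M$ has at least four removable elements with respect to $B$, and if moreover $M$ has exactly four removable elements, then these four are precisely the elastic ones and Theorem~\ref{min eltsI} gives path-width three. Thus the real work is the case where $M$ has a $\Theta$-separator.

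\textbf{Handling $\Theta$-separators.} Suppose $W \cup Z$ is a $\Theta$-separator of $M$ with $W$ rank-$2$ and $Z$ corank-$2$; since $M$ has no $4$-element fans, $\min\{|W|,|Z|\} \geq 3$, so $|W \cup Z| \geq 6$. By Lemma~\ref{theta A}, nearly all elements of $\cl(W)$ and $\cl^*(Z)$ are removable with respect to $B$: specifically, every element of $\cl(W) - B$ is removable (via $\co(M\delete w)$ being $3$-connected) and every element of $B \cap \cl^*(Z)$ is removable (via $\si(M/z)$ being $3$-connected), and the two missing sets each have size at most $2$. Counting, $\cl(W) - B$ has at least $|\cl(W)| - 2 \geq 1$ elements and $B \cap \cl^*(Z)$ has at least $|\cl^*(Z)| - 2 \geq 1$ elements, and these are disjoint subsets of $W \cup Z$ (since $\cl(W) \cap \cl^*(Z)$ is controlled — a segment element and a cosegment element cannot coincide in the relevant configurations); so $W \cup Z$ already contributes a good number of removable elements. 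The key quantitative point is that a $\Theta$-separator with at least six elements contributes at least $|W \cup Z| - 4$ removable elements with respect to $B$, which already exceeds three once $|W \cup Z| \geq 8$; the borderline cases $|W \cup Z| \in \{6, 7\}$ need to be combined with removable elements arising outside $S$, and here I would invoke the vertical/cyclic $3$-separation structure (Lemmas~\ref{vertical1}, \ref{elastic1}, \ref{lem: thetamax}) to locate at least two further removable (indeed elastic) elements in $E(M) - S$, since $S$ sits inside one side of a maximal vertical or cyclic $3$-separation and the other side contains at least two elastic elements by Lemma~\ref{elastic1} unless it too is a fan or a $\Theta$-separator — an exceptional situation one must check separately, probably by a direct small-case analysis.

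\textbf{The path-width conclusion.} For the second assertion, suppose $M$ has exactly four removable elements with respect to $B$. By the above bookkeeping, $M$ has no $\Theta$-separator with at least eight elements, and the six- and seven-element cases are tightly constrained; in each surviving configuration I would argue that the four removable elements split into two pairs $\{f_1, f_2\}$ and $\{g_1, g_2\}$ such that every vertical or cyclic $3$-separation $(X, \{e\}, Y)$ of $M$ has $\{f_1, f_2\} \subsetneq X$ and $\{g_1, g_2\} \subsetneq Y$ (mirroring the uncrossing argument in the proof of Theorem~\ref{min eltsI}), and then Lemmas~\ref{BS lem 6.1}, \ref{BS lem 6.3}, and \ref{vertical1} yield path-width three exactly as there. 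Since a $\Theta$-separator $S$ with six or seven elements is contained in a maximal vertical or cyclic $3$-separation whose complementary side would then have to contain at least two of the four removable elements, this pins down where $\{f_1, f_2\}$ and $\{g_1, g_2\}$ lie and lets the uncrossing go through.

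\textbf{Main obstacle.} The hard part will be the borderline $\Theta$-separator cases $|W \cup Z| \in \{6, 7\}$: one must show that the two or three removable elements guaranteed inside $S$ by Lemma~\ref{theta A}, together with removable elements forced outside $S$, still total at least four, and simultaneously understand the exceptional sub-configurations (e.g. when the complementary side of the relevant maximal vertical $3$-separation is itself a fan or another $\Theta$-separator) well enough to both get the count and, in the equality case, extract the two-pair splitting needed for the path-width argument. This is where a careful case analysis of small $\Theta_n$ and $\Theta_n^-$ relative to an arbitrary basis $B$ is unavoidable.
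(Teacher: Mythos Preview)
Your outline is on the right track and matches the paper's proof in its broad strokes: when $M$ has no $\Theta$-separator, Theorems~\ref{elastic2} and~\ref{min eltsI} do all the work, and when $M$ has a $\Theta$-separator $S=W\cup Z$, Lemma~\ref{theta A} provides removable elements inside $S$ while a maximal vertical $3$-separation locates more outside. However, your treatment of the $\Theta$-separator case has a genuine gap and is also more complicated than necessary.

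The gap is this. Your plan to ``locate at least two further removable elements in $E(M)-S$'' via a vertical $3$-separation $(U,\{e\},V)$ with $W\cup Z\subseteq V\cup\{e\}$ tacitly assumes such a separation exists. But if $Z$ spans $M$ (or dually, if $W$ cospans $M$), then $E(M)-Z=\cl(W)$ has rank~$2$, so no vertical $3$-separation has $W\cup Z$ on one side and a rank-$\ge 3$ set on the other; your outside-search fails precisely here. Moreover, in this situation your internal count ``$|W\cup Z|-4$'' can be only~$3$ (take $\Theta_4^-$ with $|W|=3$, $|Z|=4$), so you are genuinely short.

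The paper's remedy, which also eliminates the ``borderline cases'' you flag as the main obstacle, is to split cleanly on whether $Z$ spans $M$. If $Z$ spans $M$, then $E(M)=Z\cup\cl(W)$ with $|\cl(W)|=r^*(M)\ge 4$, so Lemma~\ref{theta A} already yields at least four removable elements, and path-width three is immediate by ordering $Z$ first and then $\cl(W)$. If $Z$ does not span $M$, then $((W\cup Z)-\{w\},\{w\},E(M)-(W\cup Z))$ is a vertical $3$-separation for any $w\in W$; taking $(U,\{e\},V)$ with $V\cup\{e\}\supseteq W\cup Z$ maximal, Lemma~\ref{elastic1} gives two elastic elements in $U$, or else $U$ sits in a second $\Theta$-separator and Lemmas~\ref{lem: thetamax} and~\ref{theta A} still give two removable elements there. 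Combined with the $\ge 3$ removable elements inside $W\cup Z$ (since $n\ge 4$ when $M$ has no $4$-element fans), this yields at least \emph{five} removable elements. Thus the ``exactly four'' case simply never arises when $Z$ does not span $M$, and no uncrossing argument of the Theorem~\ref{min eltsI} type is needed for the path-width conclusion in the presence of a $\Theta$-separator.
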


\begin{proof}
First 	suppose that there is a rank-$2$ subset $W$ and a corank-$2$ subset $Z$ of $E(M)$ such that $W\cup Z$ is a $\Theta$-separator of $M$, in which case, $r(M), r^*(M)\geq 4$. Up to duality, we may assume that $M|(W\cup Z)\in \{\Theta_n,\Theta_n^-\}$. As $M$ has no $4$-element fans, $n\ge 4$. By Lemma~\ref{theta A}, at least $|Z|-2$ elements of $Z$ and at least $|W|-2$ elements of $W$ are removable with respect to $B$. If $Z$ spans $M$, then $|\cl(W)|\geq 4$ and, as $|B\cap \cl(W)|\leq 2$, it follows by Lemma~\ref{segdelete} that $M$ has at least four elements that are removable with respect to $B$. Moreover, in this instance, $M$ has path-width three as a sequential ordering of $E(M)$ is obtained by first progressing through the elements in $Z$, and then through the elements in $\cl(W)$. Thus we may assume that $Z$ does not span $M$. Then, for any $w\in W$, the partition
$$((W\cup Z)-\{w\}, \{w\}, E(M)-(W\cup Z))$$
is a vertical $3$-separation of $M$. Let $(U,\{e\},V)$ be a vertical $3$-separation of $M$ such that $V\cup\{e\}$ is maximal and contains $W\cup Z$. Then, by Lemma~\ref{elastic1}, $U$ has at least two elastic elements, or it is contained in a $\Theta$-separator. In the latter case, by combining Lemmas~\ref{lem: thetamax} and~\ref{theta A}, we see that the set $E(M)-(W\cup Z)$ has at least two elements that are removable with respect to $B$ and so, in both cases, $M$ has at least five elements that are removable with respect to $B$.

To complete the proof, we may now assume that $M$ has no $\Theta$-separators. Then, by Theorem~\ref{elastic2}, $M$ has at least four elastic elements, and so $M$ has at least four elements that are removable with respect to $B$. Moreover, if $M$ has exactly four removable elements with respect to $B$, then these are precisely the elastic elements of $M$, and thus $M$ has path-width~$3$ by Theorem~\ref{min eltsI}.
\end{proof}

In~\cite{whi13}, Whittle and Williams asked if there exists a $3$-connected matroid $M$ with no $4$-element fans such that for every basis $B$ of $M$ there are exactly four elements of $M$ which are removable with respect to $B$. In particular, the next proposition shows that no matroid with at least four elements has this property.

\begin{proposition}
Let $M$ be a $3$-connected matroid with no $4$-element fans such that $|E(M)|\geq 4$. Then there exists a basis $B$ of $M$ such that $M$ has at least five removable elements with respect to $B$. 
\end{proposition}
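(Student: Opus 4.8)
The plan is to split into two cases according to whether or not $M$ has a $\Theta$-separator, mirroring the structure of the preceding theorem. If $M$ has no $\Theta$-separators, then by Theorem~\ref{elastic2} it has at least four elastic elements; these are removable with respect to every basis, so it suffices to produce a fifth removable element for some choice of $B$. In this case I would argue that the four elastic elements cannot account for all removable elements relative to every basis: pick any non-elastic element $g$, so that (up to duality) $\si(M/g)$ is not $3$-connected, hence $\co(M\delete g)$ is $3$-connected by Bixby's Lemma; now $g$ is removable with respect to any basis $B$ with $g\notin B$, and there is such a basis since $M$ is $3$-connected with $|E(M)|\ge 4$ (so $E(M)-B\ne\varnothing$ and we may rotate $g$ out of any spanning set). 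This gives a fifth removable element with respect to a suitable $B$, unless $M$ has at most four elements total, which is excluded, or $M$ has exactly four elements and no non-elastic element — but a $3$-connected matroid on four elements with no $4$-element fan is $U_{2,4}$, all of whose elements are elastic, and one checks $U_{2,4}$ has five (in fact all) removable elements only if... — actually $U_{2,4}$ has only four elements, so this degenerate case must be handled by noting $|E(M)|\ge 5$ whenever $M$ has at least four elastic elements and no $4$-element fan, which follows since the smallest such matroids are larger; alternatively, treat $|E(M)|\in\{4,5\}$ by direct inspection.

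If $M$ does have a $\Theta$-separator $W\cup Z$, then $r(M),r^*(M)\ge 4$ and, as in the proof of the cited Theorem~1.1 of~\cite{whi13}, since $M$ has no $4$-element fans we have $n=\max\{|W|,|Z|\}\ge 4$. Up to duality assume $M|(W\cup Z)\in\{\Theta_n,\Theta_n^-\}$. By Lemma~\ref{theta A}, at least $|W|-2$ elements of $\cl(W)$ and at least $|Z|-2$ elements of $Z$ are removable with respect to $B$; if $Z$ spans $M$ then $|\cl(W)|\ge 4$ and Lemma~\ref{segdelete} together with the bound on $|B\cap\cl(W)|$ already yields at least four such elements, and in fact one gets a fifth by choosing $B$ to miss a third element of $\cl(W)$ (possible since $|\cl(W)|\ge 4$ but we only require $|B\cap\cl(W)|\le 2$). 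If $Z$ does not span $M$, then exactly as in the proof of Theorem~1.1 the partition $((W\cup Z)-\{w\},\{w\},E(M)-(W\cup Z))$ is a vertical $3$-separation for $w\in W$; extend it to a maximal one $(U,\{e\},V)$ with $W\cup Z\subseteq V\cup\{e\}$, apply Lemma~\ref{elastic1}: either $U$ contains two elastic elements, or $U$ lies in a $\Theta$-separator and by Lemmas~\ref{lem: thetamax} and~\ref{theta A} the set $E(M)-(W\cup Z)$ contributes at least two elements removable with respect to $B$. Either way, combined with at least $(|W|-2)+(|Z|-2)\ge 1+1$ removable elements inside $W\cup Z$, we obtain at least $2+2\ge 4$ — and I would push the count to five by observing that the $\Theta$-separator contains strictly more than two removable elements whenever $n\ge 5$, so only $n=4$ (that is, $W\cup Z$ of size exactly $7$ or $8$) needs the extra care, which is where the two elastic elements of $U$ supply the surplus.

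The main obstacle I anticipate is the bookkeeping in the $\Theta$-separator case, specifically ensuring the counts do not merely reach four but genuinely reach five: Lemma~\ref{theta A} guarantees $|W|-2$ and $|Z|-2$ removable elements, which when $n=4$ gives only $2+2$, so the fifth element must come from outside $W\cup Z$, and one must verify that the maximal vertical $3$-separation argument really does deliver two \emph{distinct} elements outside $W\cup Z$ that are both elastic (hence removable with respect to any $B$), and that these are distinct from any previously counted removable elements inside $\cl(W)$ when $\cl(W)$ properly contains $W$. The key point making this work is that elastic elements are removable with respect to \emph{every} basis, so the only freedom we need to exercise in choosing $B$ is local to $\cl(W)$ and $\cl^*(Z)$, where Lemma~\ref{theta A} already pins down the relevant intersection sizes; choosing $B$ to realise those bounds simultaneously is routine, and then the elastic elements furnish the remainder unconditionally.
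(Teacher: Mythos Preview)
Your treatment of the case where $M$ has no $\Theta$-separators is essentially the paper's argument: invoke Theorem~\ref{elastic2} for four elastic elements, then use Bixby's Lemma on any non-elastic element $e$ to find a basis for which $e$ is a fifth removable element. (One small correction: $U_{2,4}$ \emph{does} have a $4$-element fan, since in $U_{2,4}$ every $3$-subset is simultaneously a triangle and a triad; this is precisely why the hypothesis excludes $|E(M)|=4$.)

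In the $\Theta$-separator case, however, you are working much harder than necessary. You have imported the case analysis from the proof of the preceding theorem, where $B$ was \emph{fixed} and one had to squeeze out removable elements for an arbitrary basis. But here you get to \emph{choose} $B$, and the paper exploits this directly: simply take $B$ to be any basis containing the independent set $Z$. Then every $z\in Z$ lies in $B$ and has $\si(M/z)$ $3$-connected by Lemma~\ref{theta A}(ii), so all $|Z|=n\ge 4$ elements of $Z$ are removable; and since $|B\cap \cl(W)|\le 2<|W|$, at least one $w\in W$ lies outside $B$ with $\co(M\delete w)$ $3$-connected by Lemma~\ref{theta A}(i). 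That gives at least five removable elements in one line, with no need to split on whether $Z$ spans $M$, no maximal vertical $3$-separations, and no appeal to Lemma~\ref{elastic1} or Lemma~\ref{lem: thetamax}.

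Your route can be pushed through, but the bookkeeping you flag as the ``main obstacle'' is real and your written counts are off: you claim $(|W|-2)+(|Z|-2)\ge 1+1$, but in fact $|Z|=n\ge 4$ forces $|Z|-2\ge 2$, so the inside count is at least $3$, which together with the two outside elastic elements does reach five. You would still need to argue that the two elastic elements in $U$ are disjoint from $\cl(W)\cup\cl^*(Z)$, which is true but adds further case-checking. The paper's choice $B\supseteq Z$ sidesteps all of this.
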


\begin{proof}
First suppose that $M$ has a rank-$2$ subset $W$ and a corank-$2$ subset $Z$ such that $W\cup Z$ is a $\Theta$-separator of $M$. By duality, we may assume that $M|(W\cup Z)\in\{\Theta_n,\Theta_n^-\}$. Since $M$ has no $4$-element fans, $n\geq 4$. Let $B$ be a basis of $M$ containing the independent set $Z$. Then, by Lemma~\ref{theta A}, every element of $Z$ and at least one element of $W$ is removable with respect to $B$, giving a total of at least five such elements. Thus, we may assume $M$ has no $\Theta$-separators. Then, by Theorem~\ref{elastic2}, $M$ has at least four elastic elements. Since elastic elements are removable with respect to any basis, the proposition holds if $M$ has at least five elastic elements. So assume that $M$ has exactly four elastic elements. The only $3$-connected matroid on four elements is $U_{2, 4}$, which has a $4$-element fan. Thus $M$ has at least five elements. Let $e$ be a non-elastic element of $M$. As $M$ has no coloops, $M$ has a basis containing $e$ as well as a basis avoiding $e$. If $e$ is not removable with respect to any basis, it follows that the matroids $\si(M/e)$ and $\co(M\delete e)$ are not $3$-connected, a contradiction to Bixby's Lemma. Hence $e$ is removable with respect to some basis $B$, in which case, $M$ has at least five removable elements with respect to $B$.
\end{proof}

Let $M$ be a $3$-connected matroid, let $N$ be a $3$-connected minor of $M$, and let $B$ be a basis of $M$. Following \cite{bre14}, an element $e$ of $M$ is called \emph{$(N,B)$-robust} if either
\begin{enumerate}[{\rm (i)}]
\item $e\in B$ and $M/e$ has an $N$-minor, or

\item $e\in E(M)-B$ and $M\delete e$ has an $N$-minor.
\end{enumerate}
Furthermore, such an element is called \emph{$(N,B)$-strong} if either
\begin{enumerate}[{\rm (i)}]
\item $e\in B$ and $\si(M/e)$ is $3$-connected with an $N$-minor, or

\item $e\in E(M)-B$ and $\co(M\delete e)$ is $3$-connected with an $N$-minor.
\end{enumerate}

Evidently, an $N$-elastic element of $M$ is $(N,B)$-strong for every basis $B$ of $M$. The next lemma follows by combining Lemma~\ref{theta A} with Lemma~\ref{theta reveal}.

\begin{lemma}
\label{theta B}
Let $M$ be a $3$-connected matroid, let $N$ be a $3$-connected minor of $M$, and let $B$ be a basis of $M$. Let $S$ be a $\Theta$-separator of $M$ with at least six elements. If $S$ reveals $N$ in $M$, then at least $|S|-4$ elements of $S$ are $(N,B)$-strong.
\end{lemma}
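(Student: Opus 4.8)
The plan is to combine the structural dichotomy for $\Theta$-separators given by Lemma~\ref{lem: thetamax}-style descriptions with the counting information in Lemmas~\ref{theta A} and~\ref{theta reveal}. First I would fix notation: let $S$ be a $\Theta$-separator of $M$ with at least six elements, and write $W$ for a rank-$2$ subset and $Z$ for a corank-$2$ subset of $E(M)$ with $S=W\cup Z$. By duality, I may assume $M|S\in\{\Theta_n,\Theta_n^-\}$, where $n=\max\{|W|,|Z|\}\geq 3$; since $|S|\geq 6$ and $M$ has rank and corank at least four (the implicit convention for matroids with $\Theta$-separators), in fact $n\geq 4$ and $\min\{|W|,|Z|\}\geq 3$ (otherwise $M$ would contain a small fan or fail the rank bounds). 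Here $W$ and $Z$ are the segment and cosegment elements, respectively, in the $M|S$ case.

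The main step is then to invoke the reveal dichotomy. Since $S$ reveals $N$ in $M$ and $M|S\in\{\Theta_n,\Theta_n^-\}$, the definition of a $\Theta$-separator revealing $N$ means at least one element of $Z$ is $N$-revealing in $M$. So Lemma~\ref{theta reveal}(iii) applies: both $\si(M/z)$ and $\co(M\delete z)$ have an $N$-minor for all $z\in Z$, and $\co(M\delete w)$ has an $N$-minor for all $w\in W$. Next, Lemma~\ref{theta A} gives the structural counting: up to duality we are in the case covered by Lemma~\ref{theta A}, so at least $|\cl^*(Z)|-2\geq |Z|-2$ elements $z$ of $Z$ lie in $B$ and for all such $z$ the matroid $\si(M/z)$ is $3$-connected; and at least $|\cl(W)|-2\geq |W|-2$ elements $w$ of $\cl(W)$ lie in $E(M)-B$ and for all such $w$ the matroid $\co(M\delete w)$ is $3$-connected. (I should be slightly careful about whether $\cl(W)$ and $\cl^*(Z)$ may meet $S$ only or extend beyond; restricting attention to $Z\subseteq \cl^*(Z)$ and $W\subseteq \cl(W)$ is enough for the bound I want.)

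Putting these together: each of the at least $|Z|-2$ elements $z\in Z\cap B$ has $\si(M/z)$ 3-connected (by Lemma~\ref{theta A}) and with an $N$-minor (by Lemma~\ref{theta reveal}), hence is $(N,B)$-strong; and each of the at least $|W|-2$ elements $w\in W\cap (E(M)-B)$ has $\co(M\delete w)$ 3-connected (by Lemma~\ref{theta A}) and with an $N$-minor (by Lemma~\ref{theta reveal}), hence is $(N,B)$-strong. These two sets of strong elements are disjoint since one lies in $W$ and the other in $Z$, and $W\cap Z=\varnothing$. Therefore $S$ contains at least $(|Z|-2)+(|W|-2)=|S|-4$ elements that are $(N,B)$-strong, as required.

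I do not expect a serious obstacle here, since each input lemma does the real work; the only care needed is bookkeeping about which of $W,Z$ plays the segment role after the duality reduction, and making sure the elements counted by Lemma~\ref{theta A}(i) and (ii) are genuinely distinct (they are, being in disjoint parts $W$ and $Z$) so that the two bounds add. A minor point to verify is the claim that $W\cup Z$ having at least six elements forces $\min\{|W|,|Z|\}\geq 3$; this follows because a $\Theta$-separator with a part of size at most two would, together with the rank/corank hypotheses, produce a $4$-element fan or violate $n=\max\{|W|,|Z|\}\geq 3$ in a way incompatible with $|W\cup Z|\geq 6$, so the bound $|S|-4$ is meaningful (nonnegative) exactly when we are in the regime the statement addresses.
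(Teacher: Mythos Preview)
Your proposal is correct and follows essentially the same approach as the paper, which simply states that the lemma follows by combining Lemma~\ref{theta A} with Lemma~\ref{theta reveal}. Your argument makes this combination explicit: Lemma~\ref{theta reveal}(iii) provides the $N$-minors in $\si(M/z)$ and $\co(M\delete w)$, Lemma~\ref{theta A} supplies the $3$-connectivity together with the counts $|Z\cap B|\ge |Z|-2$ and $|W-B|\ge |W|-2$, and disjointness of $W$ and $Z$ lets you add to obtain at least $|S|-4$ $(N,B)$-strong elements.
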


We end the paper by showing that the two main results of~\cite{bre14} follow from Theorems~\ref{thm: main} and~\ref{min eltsII}, and a treatment of $\Theta$-separators.

\begin{theorem}[\cite{bre14}, Theorems 1.1 and 1.2]
Let $M$ be a $3$-connected matroid with no $4$-element fans such that $|E(M)|\geq 5$. Let $N$ be a $3$-connected minor of $M$, and let $B$ be a basis of $M$.
\begin{enumerate}[{\rm (i)}]
\item If $M$ has two distinct $(N, B)$-robust elements, then $M$ has two distinct $(N,B)$-strong elements.

\item Let $P$ denote the set of $(N, B)$-robust elements of $M$. If $M$ has precisely two $(N, B)$-strong elements, then $(P, E(M)-P)$ is a sequential $3$-separation of $M$.
\end{enumerate}
\label{robust}
\end{theorem}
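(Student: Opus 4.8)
The strategy is to reduce both parts to the corresponding statements about $N$-elastic elements already established, namely Theorems~\ref{thm: main} and~\ref{min eltsII}, by a careful case analysis according to whether $M$ has a $\Theta$-separator and whether that $\Theta$-separator reveals $N$. The key point is that an $N$-elastic element is $(N,B)$-strong for \emph{every} basis $B$, so whenever we can guarantee $N$-elastic elements we are done; the only obstructions are the excluded configurations (fans and $\Theta$-separators), and fans are hypothesised away. Thus the whole proof is really an orchestration of three regimes: (a) $M$ has no $\Theta$-separator; (b) $M$ has a $\Theta$-separator revealing $N$; (c) $M$ has a $\Theta$-separator not revealing $N$ (equivalently, by Lemma~\ref{theta reveal}, $|E(N)|\le 3$ forces case (b), so (c) can only happen when $|E(N)|\ge 4$).

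\textbf{Part (i).} First suppose $M$ has two distinct $(N,B)$-robust elements; this is exactly the hypothesis that $M$ has an $N$-revealing element in disguise, once one checks that a robust element which is not strong must be $N$-revealing (if $e\in B$ is robust then $M/e$ has an $N$-minor; if moreover $e$ is not strong then either $\si(M/e)$ is not $3$-connected, giving $N$-revealing directly once one knows $\si(M/e)$ has an $N$-minor — true when $N$ is simple, and when $|E(N)|\le 3$ one appeals to the ``$U_{1,3}$/$U_{2,3}$ always present'' remark after Lemma~\ref{theta reveal} — or $\si(M/e)$ has no $N$-minor, in which case the \emph{other} robust element must be examined). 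So I would argue: if \emph{no} element of $M$ is $N$-revealing, then having two robust elements forces them to be strong directly. If some element is $N$-revealing, then in case (a) Theorem~\ref{thm: main} gives two $N$-elastic elements, hence two $(N,B)$-strong elements. In case (b), Lemma~\ref{theta B} gives $|S|-4\ge 2$ $(N,B)$-strong elements since $|S|\ge 6$. In case (c), with $|E(N)|\ge 4$, I would show that a $\Theta$-separator $S$ not revealing $N$ still leaves the rest of $M$ intact: the $N$-revealing element lies outside $S$ (by Lemma~\ref{theta reveal}(i) no $z\in Z$ is $N$-revealing, and by Lemma~\ref{exception} and Bixby no $w\in W$ is), so one can run the maximal-vertical-$3$-separation machinery of Theorem~\ref{thm: maximal} with the maximal side $Y'\cup\{e'\}$ chosen to contain $S$; then $X'$ is not contained in $S$, $X'\cup\{e'\}$ is not a $4$-element fan, and $X'$ is not contained in a $\Theta$-separator revealing $N$, so $X'$ contains two $N$-elastic elements.

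\textbf{Part (ii).} Now suppose $M$ has precisely two $(N,B)$-strong elements, and let $P$ be the set of $(N,B)$-robust elements. In case (b) Lemma~\ref{theta B} already produces $|S|-4$ strong elements, and since $|S|\ge 6$ this is at least two, with equality forcing $|S|=6$; a short direct analysis of the six-element $\Theta$-separators (which are $\Theta_3$, $\Theta_3^-$ with an extra element, etc., i.e.\ $M(K_4)$ or $5$-element fans with one more point — but fans are excluded) pins down the path-width-three / sequential structure by hand, exhibiting the sequential ordering explicitly (progress through $Z$, then through $\cl(W)$, as in the proof of the Whittle--Williams theorem above). In case (a) the two $(N,B)$-strong elements must be exactly the two $N$-elastic elements $s_1,s_2$ guaranteed by Theorem~\ref{thm: main} (any further $N$-elastic element would be a third strong element), so Theorem~\ref{min eltsII} applies: there is an ordering $(e_1,\ldots,e_k)$ of the $N$-revealing set $K$ with $(\{s_1,s_2\},\{e_1\},\ldots,\{e_k\},E(M)-K\cup\{s_1,s_2\})$ a path of $3$-separations. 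The final step is to identify $P$ with $K\cup\{s_1,s_2\}$ up to the last petal: every element of $K\cup\{s_1,s_2\}$ is robust (for $i<k$, Theorem~\ref{min eltsII} gives both $M/e_i$ and $M\delete e_i$ have $N$-minors, so $e_i\in P$ regardless of which side of $B$ it lies on; $s_1,s_2$ are strong hence robust; $e_k$ is robust on whichever side it falls). Conversely an element outside $K\cup\{s_1,s_2\}$ is neither $N$-revealing nor $N$-elastic, and a brief orthogonality argument (an element $e$ with $e\in B$ and $M/e$ having an $N$-minor but $\si(M/e)$ $3$-connected would be strong, contradiction; $e\notin B$ with $M\delete e$ having an $N$-minor but $\co(M\delete e)$ $3$-connected is likewise strong) shows such an element is not robust either — except possibly for elements in the last petal $E(M)-K\cup\{s_1,s_2\}$, where robustness may hold but the path of $3$-separations already absorbs them. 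Invoking Lemma~\ref{BS lem 6.1}, the path of $3$-separations with singleton petals beyond a size-two initial petal is precisely a sequential $3$-separation, so $(P,E(M)-P)$ is sequential. Case (c) is handled as in Part~(i): the $\Theta$-separator not revealing $N$ sits inside the last petal and the $N$-elastic/$N$-revealing analysis proceeds outside it.

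\textbf{Main obstacle.} The delicate point is case (c) of both parts — reconciling the presence of a $\Theta$-separator that does \emph{not} reveal $N$ with the path structure, and in particular verifying that the $N$-revealing set $K$ and the robust set $P$ interact correctly with such a separator (the separator's elements are robust-ish on the appropriate side of $B$ but contribute nothing to $K$). The bookkeeping required to show that such a separator can be pushed entirely into the terminal petal of the path of $3$-separations, without disturbing the maximality arguments, is where the proof will need the most care; everything else is a routine translation through Lemmas~\ref{theta reveal}, \ref{theta B}, and~\ref{BS lem 6.1}.
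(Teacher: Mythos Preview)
Your overall strategy---reduce to the $N$-elastic results---is exactly the paper's, but you have misread the hypotheses of Theorems~\ref{thm: main} and~\ref{min eltsII}, and this manufactures a phantom obstacle. Those theorems require only that $M$ have no $\Theta$-separators \emph{revealing $N$}; they do not require $M$ to have no $\Theta$-separators. Hence your case~(c) (a $\Theta$-separator present but not revealing $N$) is already covered by the same citation as your case~(a): once case~(b) is dispatched, what remains is precisely the hypothesis of Theorems~\ref{thm: main} and~\ref{min eltsII}, regardless of whether non-revealing $\Theta$-separators exist. All the ``push the separator into the terminal petal'' bookkeeping you flag as the main obstacle simply evaporates. (Your proposed case-(c) argument also has a gap: the claim that no $w\in W$ is $N$-revealing does not follow from Lemma~\ref{exception} and Bixby's Lemma, since $\si(M/w)$ being non-$3$-connected makes $w$ $N$-revealing \emph{whenever} $\si(M/w)$ has an $N$-minor, and nothing rules that out.)

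Two further issues. First, your handling of $|E(N)|\le 3$ in Part~(i) is not sound: when $N$ is not both simple and cosimple, a robust element need be neither strong nor $N$-revealing (e.g.\ $e\in B$ with $M/e$ having a $U_{1,3}$-minor but $\si(M/e)\cong U_{2,3}$), so your dichotomy ``robust $\Rightarrow$ strong or $N$-revealing'' fails. The paper closes this by invoking Proposition~\ref{smallN2}, which gives four $N$-elastic elements outright when $|E(N)|\le 3$; this also forces $|E(N)|\ge 4$ at the start of Part~(ii). Second, since $M$ has no $4$-element fans, every $\Theta$-separator has at least seven elements, not six: both $\Theta_3\cong M(K_4)$ and $\Theta_3^-$ (a $5$-element fan) force $4$-element fans in $M$. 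Thus in case~(b) Lemma~\ref{theta B} yields at least \emph{three} $(N,B)$-strong elements, making Part~(ii) case~(b) vacuous; your proposed ``direct analysis of six-element $\Theta$-separators'' never arises.

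Finally, in Part~(ii) your identification of $P$ is slightly off at the end. Elements of the terminal petal $E(M)-(K\cup\{s_1,s_2\})$ are genuinely not robust: if such an $e\in B$ had $M/e$ with an $N$-minor, then (as $|E(N)|\ge 4$) $\si(M/e)$ also has an $N$-minor, and since $e\notin K$ it is $3$-connected, making $e$ a third strong element---contradiction. So $P\subseteq K\cup\{s_1,s_2\}$, and together with $e_i\in P$ for all $i<k$ from Theorem~\ref{min eltsII} you get $P\in\{K\cup\{s_1,s_2\},\,(K\cup\{s_1,s_2\})-\{e_k\}\}$, after which Lemma~\ref{BS lem 6.1} finishes.
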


\begin{proof}
If $M$ has rank or corank at most two, then the theorem follows easily from the fact that $|E(M)|\geq 5$. Furthermore, a routine check shows that the theorem holds if $|E(M)|\in \{6, 7\}$. Thus we may assume that $r(M), r^*(M)\geq 3$ and $|E(M)|\geq 8$. Since $M$ has no $4$-element fans, any $\Theta$-separator of $M$ has at least seven elements. Therefore, if $M$ has a $\Theta$-separator revealing $N$, then, by Lemma~\ref{theta B}, $M$ has at least three $(N, B)$-strong elements. Thus we may assume that $M$ has no such $\Theta$-separators. We first prove (i). If $|E(N)|\leq 3$, then, by Proposition~\ref{smallN2}, $M$ has at least four $N$-elastic elements, and so (i) holds. Thus we may assume that $|E(N)|\geq 4$, in which case, every $(N, B)$-robust element is either $(N, B)$-strong or $N$-revealing. It follows that either each of the two guaranteed $(N,B)$-robust elements are $(N,B)$-strong or, by Theorem~\ref{thm: main}, $M$ has at least two $N$-elastic elements. In either instance, $M$ has at least two $(N,B)$-strong elements, thereby proving (i).

To prove (ii), suppose that $M$ has precisely two $(N, B)$-strong elements $\{s_1, s_2\}$, in which case, $|E(N)|\ge 4$. If $M$ has no $N$-revealing elements, then $P=\{s_1, s_2\}$ and $(P, E(M)-P)$ is trivially a sequential $3$-separation of $M$. So assume that $M$ has at least one $N$-revealing element. In this case, it follows by Theorem~\ref{thm: main} that $s_1$ and $s_2$ are $N$-elastic, and that $M$ has no further $N$-elastic elements. Now let $K$ be the set of $N$-revealing elements of $M$. Note that $P-\{s_1,s_2\}\subseteq K$. By Theorem~\ref{min eltsII}, $K$ has an ordering $(e_1, e_2, \ldots, e_k)$ such that
$$(\{s_1, s_2\}, \{e_1\}, \{e_2\}, \ldots, \{e_k\}, E(M)-(K\cup\{s_1, s_2\}))$$
is a path of $3$-separations in $M$ and, for all $i< k$, both $M/e_i$ and $M\delete e_i$ have an $N$-minor. In particular, $e_i$ is $(N,B)$-robust for all $i<k$ and, consequently, $P$ is either $K\cup\{s_1, s_2\}$ or $K\cup\{s_1, s_2\}-\{e_k\}$. Thus, by Lemma~\ref{BS lem 6.1}, $(P,E(M)-P)$ is a sequential $3$-separation, completing the proof of (ii) and the theorem.
\end{proof}

\end{document}